\documentclass{amsart}
\usepackage{graphicx} % Required for inserting images
\usepackage{hyperref}
\usepackage{aliascnt}
\usepackage{bm}
\usepackage{stmaryrd}
\usepackage{amsmath}
\usepackage{mathtools}
\usepackage{tikz-cd}
\usepackage{cleveref}
\usepackage[english]{babel}
\usepackage{csquotes}
\title[Transcendence of multivariate Mahler functions]{Transcendence of multivariate Mahler functions and algebraic relations between their values}
\author{Enzo Brechler}
\date{February 2026}

\theoremstyle{plain}
\newtheorem{theo}{Theorem}[section]

\newaliascnt{lem}{theo}
\newtheorem{lem}[lem]{Lemma}
\aliascntresetthe{lem}

\newaliascnt{prop}{theo}
\newtheorem{prop}[prop]{Proposition}
\aliascntresetthe{prop}

\newaliascnt{coro}{theo}
\newtheorem{coro}[coro]{Corollary}
\aliascntresetthe{coro}

\theoremstyle{definition}

\newaliascnt{defi}{theo}
\newtheorem{defi}[defi]{Definition}
\aliascntresetthe{defi}

\newaliascnt{ex}{theo}
\newtheorem{ex}[ex]{Example}
\aliascntresetthe{ex}

\newaliascnt{rk}{theo}
\newtheorem{rk}[rk]{Remark}
\aliascntresetthe{rk}

\newaliascnt{nt}{theo}
\newtheorem{nt}[nt]{Notation}
\aliascntresetthe{nt}

\crefname{theo}{theorem}{theorems}
\Crefname{theo}{Theorem}{Theorems}
\crefname{lem}{lemma}{lemmas}
\Crefname{lem}{Lemma}{Lemmas}

\crefname{prop}{proposition}{propositions}
\Crefname{prop}{Proposition}{Propositions}

\crefname{defi}{definition}{definitions}
\Crefname{defi}{Definition}{Definitions}

\crefname{ex}{example}{examples}
\Crefname{ex}{Example}{Examples}

\crefname{rk}{remark}{remarks}
\Crefname{rk}{Remark}{Remarks}

\crefname{coro}{corollary}{corollaries}
\Crefname{coro}{Corollary}{Corollaries}

\crefname{nt}{notation}{notations}
\Crefname{nt}{Notation}{Notations}

\newcommand{\C}{\mathbb{C}}
\newcommand{\R}{\mathbb{R}}
\newcommand{\Q}{\mathbb{Q}}
\newcommand{\Z}{\mathbb{Z}}
\newcommand{\N}{\mathbb{N}}
\newcommand{\K}{\mathbb{K}}

\newcommand{\A}{\overline{\mathbb{Q}}}
\newcommand{\z}{\bm{z}}
\newcommand{\vaset}{\llbracket     1,n \rrbracket}
\newcommand{\al}{\bm{\alpha}}
\newcommand{\GL}{\operatorname{GL}}
\newcommand{\Frac}{\operatorname{Frac}}
\newcommand{\tr}{\operatorname{tr.deg}}
\newcommand{\Span}{\operatorname{Span}}
\newcommand{\Diag}{\operatorname{Diag}}
\newcommand{\M}{\operatorname{M}}
\newcommand{\len}{\operatorname{len}}

\begin{document}

\begin{abstract}
We study the functional properties of Mahler functions of several variables associated with a large class of Mahler transformations. We prove that these functions are always analytic on a neighborhood of zero, and meromorphic on the open unit ball in $\C^n$, as well as in $\C_p^n$ for all primes $p$. This enables us to prove a rational-transcendental dichotomy for these functions. As a consequence, we strengthen the lifting theorem recently obtained by Adamczewski and Faverjon. This theorem allows us to systematically compute the algebraic relations between values of multivariate Mahler functions related by a Mahler system, given the algebraic relations between the functions themselves. Finally we prove a multivariate descent theorem in the case of regular points.
\end{abstract}
\maketitle
\tableofcontents

\section{Introduction}
Given convergent power series $f_1(z_1,\ldots,z_n), \ldots,f_m(z_1,\ldots,z_n) \in \A\{z_1,\ldots,z_n\}$ and an algebraic point $\bm{\alpha}=(\alpha_1,\ldots,\alpha_n) \in (\A^*)^n$ lying in their domain of convergence, a fundamental question in transcendence theory is to understand the algebraic relations between the numbers $f_1(\al), \ldots, f_m(\al)$. In particular, we would like to know whether the $f_i(\al)$ are algebraic or transcendental.

In the univariate case $n=1$, this problem is well understood for two famous classes of functions: the Siegel $E$-functions, which satisfy linear differential equations, and the Mahler $M$-functions, which satisfy linear difference equations associated to a Mahler operator $z \longrightarrow z^q$ with $q \geq 2$.

In this paper, we will focus on the case of Mahler functions in the multivariate setting. We let $T=(t_{ij})$ be a non-singular matrix with nonnegative integer coefficients. Such a matrix $T$ defines a transformation of $\C^n$ via $$T\z=(z_1^{t_{11}}z_2^{t_{21}} \ldots z_n^{t_{n1}}, \ldots ,z_1^{t_{1n}}z_2^{t_{2n}} \ldots z_n^{t_{nn}}) \, .$$
$T$ is then extended to a difference operator on functions (polynomials, power series...) by $(Tf)(\z)=f(T\z)$.\\
Equivalently, we can represent this as a substitution rule:
\begin{align*}
    z_1 &\rightarrow z_1^{t_{11}}z_2^{t_{21}} \ldots z_n^{t_{n1}}& \\
    &\vdots&\\
    z_n &\rightarrow z_1^{t_{1n}}z_2^{t_{2n}} \ldots z_n^{t_{nn}} \, .
\end{align*}
An $M_T$-function is then defined as a formal power series $f \in \A[[\z]]$ satisfying a functional equation of the form
$$\sum_{i=0}^m{p_i(\z)f(T^i\z)}=0$$
where the $p_i$ are polynomials with algebraic coefficients (and the $p_i$'s are not all zero).\\
To study $M_T$-functions, it is more convenient to work with so-called $T$-Mahler systems instead, which are systems of the form $Y(\z)=A(\z)Y(T\z)$ where $A \in \GL_m(\A(\z))$. It is a standard result in the univariate case that a power series $f$ is an $M_T$-function if and only if it is a coordinate of a solution of a Mahler system, and we will show that this remains valid in the multivariate case. If $(f_1(\z),\ldots,f_m(\z))^*$ (throughout this paper, the transpose will be denoted by a star to avoid confusion with the operator $T$) is a solution of a $T$-Mahler system, we would like to obtain transcendence results on the numbers $f_i(\al)$  where $\al \in (\A^*)^n$ is an algebraic point.\\
It is well known that some assumptions are necessary on the transformation $T$ and on $\al$: $\al$ must be regular with respect to the system, meaning that for all $k \in \N$, $A(T^k\al)$ is well defined and invertible, and furthermore, $(T,\al)$ must be admissible in the following sense.

\begin{defi} \label{defi:admi}
Let $T$ be a transformation and $\al \in (\A^*)^n$ an algebraic point. We say that $(T,\al)$ is admissible if the following conditions are satisfied.
\begin{enumerate}
    \item $T$ belongs to the class $\mathcal{T}$, which means that $T$ is non-singular, that none of the eigenvalues of $T$ is a root of unity, and that $T$ has a Perron-Frobenius eigenvector (an eigenvector with positive coordinates associated to the dominant eigenvalue of $T$).
    \item $T^k\al \longrightarrow\bm{0}$.
    \item $\al$ is $T$-independent, which means that there is no non-zero vector of integers $\bm{\mu}=(\mu_1,\ldots,\mu_n)$ such that $(T^k\al)^{\bm{\mu}}=1$ for all $k$ in an infinite arithmetic progression (see \Cref{not:1} for the monomial notation). Note that if the coordinates of $\al$ are multiplicatively independent, then $\al$ is $T$-independent with respect to every non-singular transformation $T$.    
\end{enumerate}
\end{defi}

The transcendence theory for Mahler functions originated with three seminal papers by Mahler \cite{mahler1929arithmetische, mahler1930arithmetische, mahler1930uber} in 1929. After having been largely forgotten for about fifty years, Mahler's method experienced a revival in the 1970s and 1980s with important contributions by Loxton and Van der Poorten \cite{LVDP1, LVDP2,LVDP3}, Kubota \cite{Kubota}, Nishioka \cite{Nish1983} and Masser \cite{Masser}.

In 1990, Nishioka \cite{Nish} proved a landmark result for the univariate case $n=1$ ($T=(q)$ with $q \ge 2$). It states that if $Y(z)=(f_1(z),\ldots,f_m(z))^*$ is a solution of a $q$-Mahler system and if $\alpha \in \A^*$ (with $|\alpha|<1$) is a regular point with respect to this system, then we have the equality of transcendence degrees
$$\tr_{\A}(f_1(\alpha),\ldots,f_m(\alpha))=\tr_{\A(z)}(f_1(z),\ldots,f_m(z)) \, .$$
This is the analog of the famous Siegel-Shidlovskii theorem for $E$-functions. We also refer to the book \cite{nishioka2006mahler} for an overview of Mahler's method until 1996.\\
In 2015, Nishioka's result was improved by Philippon \cite{Phil}, and Adamczewski and Faverjon \cite{adamczewski2017methode} who proved the so-called Lifting theorem, an analog of the result obtained by Beukers in 2006 in the case of $E$-functions \cite{Beu}. These lifting theorems state that (under the same conditions as in Nishioka's theorem), every homogeneous algebraic relation over $\A$ between our values $f_1(\alpha), \ldots, f_m(\alpha)$ is obtained from a homogeneous algebraic relation over $\A(z)$ between the functions $f_1(z), \ldots, f_m(z)$, after evaluating at $\alpha$.\\
After these lifting theorems were established, the transcendence theory for $E$-functions and $M_q$-functions became very similar, as shown in \cite{E+M}. One of the main consequences of the lifting theorem is the descent theorem (see \cite{adamczewski2017methode}), stating that if $f_1, \ldots, f_m$ are $M_q$-functions with coefficients in a number field $\K$ and $\alpha \in \K$, then the linear dependence of $f_1(\alpha),\ldots,f_m(\alpha)$ over $\A$ implies their linear dependence over $\K$. An important motivation for these results comes from a connection between automatic numbers and Mahler functions discovered by Cobham \cite{cobham1968hartmanis}: the lifting theorem implies that a real automatic number is either rational or transcendental (a result which was conjectured by Cobham in his paper). Finally, Adamczewski and Faverjon \cite{Algotrans} showed how to use the lifting theorem to devise an algorithm to systematically solve our initial problem of finding the algebraic relations between values of univariate Mahler functions.

In the multivariate case, a significant breakthrough was achieved more recently by Adamczewski and Faverjon (Theorem 3.3 in \cite{adamczewski2020mahler}).  This is the theorem that this paper aims to refine in order to obtain the following \enquote{optimal lifting theorem}.
\begin{theo} [Lifting Theorem] \label{thm:lift} $\newline$
Let $Y=(f_1,\ldots,f_m)^*$ be a vector of power series in $\A[[\z]]$ which satisfies a system $Y(\z)=A(\z)Y(T\z)$ with $A \in \GL_m(\A(\z))$. We assume that $\al \in (\A^*)^n$ is a regular point for this system, and that $(T,\al)$ is admissible.\\
Then for any homogeneous polynomial $P \in \A[X_1,\ldots,X_m]$ such that
$$P(f_1(\al),\ldots,f_m(\al))=0$$
there exists a polynomial $Q \in \A[\z,X_1,\ldots,X_m]$, homogeneous in the variables $X_i$, such that
$$Q(f_1(\z),\ldots,f_m(\z))=0 \text{\ and \ } Q(\al,X_1,\ldots,X_m)=P(X_1,\ldots,X_m) \, .$$
\end{theo}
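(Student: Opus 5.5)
Starting from Theorem 3.3 of Adamczewski and Faverjon \cite{adamczewski2020mahler}, I will improve it to the optimal form. As I recall it, their theorem gives the same conclusion under an extra hypothesis: the functions $f_i$ must be meromorphic on a suitable domain around $\al$, or $A$ must have a particular shape (for instance, entries analytic near the orbit). Equivalently, it holds after replacing the point by $T^k\al$ for $k$ large, where everything is analytic. So the plan has three parts. First, establish the functional-analytic input: every coordinate of a solution $Y\in\A[[\z]]^m$ of $Y(\z)=A(\z)Y(T\z)$ with $T\in\mathcal T$ converges on a neighborhood of $\bm 0$ and extends meromorphically to the open unit ball. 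Second, apply the existing lifting theorem at a shifted point $T^k\al$. Third, pull the resulting relation back to $\al$ using the functional equation and regularity.

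\textbf{Analyticity.} Let $\bm v$ be a Perron--Frobenius eigenvector of $T$ with dominant eigenvalue $\rho>1$. Weight monomials $\z^{\bm\mu}$ by $\langle\bm\mu,\bm v\rangle$. Clear denominators to write $d(\z)Y(\z)=B(\z)Y(T\z)$. Comparing coefficients then gives a recursion in which the coefficients of weight about $w$ are determined by those of weight at most $w/\rho$, plus a finite-dimensional indeterminacy near $\bm 0$. Handling that indeterminacy is delicate when $d(\bm 0)=0$; I would first reduce to $d(\bm 0)\neq0$ by a change of unknown $Y=RZ$ with $R$ a suitable polynomial matrix. The recursion yields geometric coefficient bounds, so $Y$ converges near $\bm 0$. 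Iterating $Y(\z)=A(\z)A(T\z)\cdots A(T^{k-1}\z)Y(T^k\z)$ and using $T^k\z\to\bm0$ uniformly on compacta of the unit ball, each $f_i$ is a ratio of analytic functions there, hence meromorphic.

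\textbf{Lifting.} Since $(T,\al)$ is admissible, $T^k\al\to\bm0$. For $k$ large, $\bm\beta:=T^k\al$ lies in the convergence domain, and $(T,\bm\beta)$ is still admissible and regular. Set $A_k(\z)=A(\z)\cdots A(T^{k-1}\z)$. By regularity, $A_k(\al)$ is defined and invertible, so $Y(\al)=A_k(\al)Y(\bm\beta)$.

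Given homogeneous $P$ with $P(Y(\al))=0$, define $P'(X)=P(A_k(\al)X)$, which is homogeneous and vanishes at $Y(\bm\beta)$. The known theorem at $\bm\beta$ gives $Q'$, homogeneous in $X$, with $Q'(\z,Y(\z))=0$ and $Q'(\bm\beta,X)=P'(X)$. Now set
\[
Q(\z,X)=Q'\bigl(T^k\z,\,A_k(\z)^{-1}X\bigr),
\]
after multiplying by a power of a polynomial denominator that does not vanish at $\al$; this is possible by regularity. Then $Q(\z,Y(\z))=Q'(T^k\z,Y(T^k\z))=0$, and $Q(\al,X)=P'(A_k(\al)^{-1}X)=P(X)$ up to a nonzero constant that we normalize. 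Homogeneity is preserved since the substitution is linear in $X$.

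\textbf{Main obstacle.} The main obstacle is the analyticity step. Without an a priori radius of convergence, one cannot even make sense of $Y(\al)$, nor be sure that the points $T^k\al$ land where Adamczewski--Faverjon's hypotheses hold. The delicate part is controlling the recursion when the leading coefficient $d$ vanishes at the origin in several variables. I would handle it via the weight filtration given by the Perron--Frobenius eigenvector, together with a Newton-polygon-style choice of the dominant monomial of $d$, so that one can divide by it in a suitable ring of Laurent series supported in a cone.
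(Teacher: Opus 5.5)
\textbf{Gap.} The pull-back step is fine as a reduction. With $A_k(\z)=A(\z)A(T\z)\cdots A(T^{k-1}\z)$, set $Q(\z,X)=Q'(T^k\z,A_k(\z)^{-1}X)$ and clear a denominator that does not vanish at $\al$ (this is where regularity is used). This does transport a lifting at $\bm\beta=T^k\al$ back to $\al$.

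The gap is that you have misidentified what is missing from Theorem~3.3 of \cite{adamczewski2020mahler}. The obstruction there is not analytic. Without extra hypotheses, that theorem produces $Q$ with coefficients in the algebraic closure of $\A(\z)$ in $\A\{\z-\al\}$, i.e.\ algebraic functions rather than polynomials. It gives $Q\in\A[\z,X_1,\ldots,X_m]$ only when $\A(\z)(f_1,\ldots,f_m)$ is a regular extension of $\A(\z)$: every element algebraic over $\A(\z)$ must already lie in $\A(\z)$. That is a property of the function field and has nothing to do with the evaluation point.

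So moving from $\al$ to $T^k\al$ buys nothing. At $\bm\beta$ the theorem again yields $Q'$ with algebraic-function coefficients, and your substitution returns a $Q$ whose coefficients (composed with $T^k\z$) are still algebraic functions. Likewise, convergence at $\bm 0$ and meromorphy on $B(0,1)$, however carefully established, cannot by themselves turn algebraic-function coefficients into polynomial ones. Carried out perfectly, your plan only reproduces the weaker lifting that was already known.

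\textbf{Missing idea.} What is needed is the functional rational--transcendental dichotomy: an $M_T$-function that is algebraic over $\A(\z)$ is rational. The paper proves it in three steps.
\begin{enumerate}
\item The coefficients of an $M_T$-function are $S$-integers in a number field.
\item Choose $D\in\A[\z]$ with $g=Df$ integral over $\A[\z]$. Meromorphy on the open unit ball at every place gives the height bound $h(c(\bm k))=o(|\bm k|)$ for the coefficients $c(\bm k)$ of $g$. At archimedean places this uses the Riemann extension theorem and Cauchy estimates; at $p$-adic places it uses integral closedness of the Tate algebras.
\item The multivariate P\'olya--Carlson theorem of Bell, Chen, Nguyen and Zannier then shows $g$ is rational.
\end{enumerate}
Regularity follows from the dichotomy. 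The algebraic closure of $\A(\z)$ in $\A(\z)(T^lf_i)$ is finite over $\A(\z)$, so each of its elements is $T$-Mahler. After clearing a polynomial denominator, such an element is integral over $\A[[\z]]$, hence lies in $\A[[\z]]$. It is therefore an algebraic $M_T$-function, hence rational.

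Your analytic step is a genuine ingredient of the correct proof. But it serves only as input to this dichotomy, and it must be carried out at all places, not just the archimedean one; it cannot replace the dichotomy.
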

Adamczewski and Faverjon obtained a weaker version of \Cref{thm:lift} in \cite{adamczewski2020mahler}, where $Q \in R[X_1,\ldots,X_m]$ with the ring $R$ defined as the algebraic closure of $\A(\z)$ in $\A\{\z-\al\}$. Replacing $R$ by the much more satisfying $\A[\z]$ can be seen as the main goal of this paper. Adamczewski and Faverjon also obtained the conclusion of \Cref{thm:lift} under the condition that $\A(\z,f_1(\z),\ldots,f_m(\z))$ is a regular field extension of $\A(\z)$ (meaning that every element of $\A(\z,f_1(\z),\ldots,f_m(\z))$ is either in $\A(\z)$ or transcendental over $\A(\z)$). It will be shown in this paper that this regularity condition always holds and hence can be dropped.\\
A direct consequence of the lifting theorem, obtained when taking $P$ homogeneous of degree $1$, is the following linear independence result.
 \begin{coro} \label{prop:indep}
Under the assumptions of the lifting theorem, if the functions \linebreak $f_1,\ldots,f_m$ are linearly independent over $\A(\z)$, then the values $f_1(\al),\ldots,f_m(\al)$ are linearly independent over $\A$.
\end{coro}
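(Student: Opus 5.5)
We argue by contraposition, applying \Cref{thm:lift} to a linear form. Suppose the values $f_1(\al),\ldots,f_m(\al)$ are linearly dependent over $\A$. Then there exist $\lambda_1,\ldots,\lambda_m \in \A$, not all zero, such that $\sum_i \lambda_i f_i(\al)=0$. Set $P(X_1,\ldots,X_m)=\sum_i \lambda_i X_i$. This is a nonzero homogeneous polynomial of degree $1$ that vanishes at $(f_1(\al),\ldots,f_m(\al))$.

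Since the hypotheses of the lifting theorem hold, \Cref{thm:lift} gives a polynomial $Q \in \A[\z,X_1,\ldots,X_m]$, homogeneous in the $X_i$, with $Q(f_1(\z),\ldots,f_m(\z))=0$ and $Q(\al,X)=P(X)$. The key point is the degree of $Q$ in the $X$ variables. Write $Q=\sum_{|\bm{k}|=d} q_{\bm{k}}(\z)X^{\bm{k}}$, where $d$ is its homogeneous degree. Specialising at $\z=\al$ gives $\sum_{|\bm{k}|=d} q_{\bm{k}}(\al)X^{\bm{k}}=P$. Since $P$ is nonzero and homogeneous of degree $1$, comparing degrees forces $d=1$. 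Hence $Q=\sum_i q_i(\z)X_i$ with $q_i(\al)=\lambda_i$.

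It follows that $\sum_i q_i(\z) f_i(\z)=0$ is a linear relation with polynomial coefficients $q_i \in \A[\z]$. These coefficients are not all zero: some $\lambda_j\neq 0$, so $q_j(\al)\neq 0$ and $q_j \neq 0$. Thus $f_1,\ldots,f_m$ are linearly dependent over $\A[\z]$, and therefore over $\A(\z)$. This contradicts the hypothesis, which proves the corollary.

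The only step needing care is the degree comparison showing that $Q$ is linear in the $X_i$. It relies on $Q$ being homogeneous together with $Q(\al,X)=P\neq 0$: the specialisation $Q(\al,X)$ is homogeneous of degree $d$ and nonzero, so $d=\deg P=1$.
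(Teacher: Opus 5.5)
Your proof is correct and is essentially the paper's argument: the paper presents this corollary as the direct consequence of \Cref{thm:lift} applied to a nonzero homogeneous $P$ of degree $1$. Your degree comparison showing that $Q$ is linear in the $X_i$, and the observation that $q_j(\al)=\lambda_j\neq 0$ forces $q_j\neq 0$, are exactly the details the paper leaves implicit.
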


Given $f$ an $M_T$-function and $\al$ an admissible point, the previous result gives the following effective criterion to prove that $f(\al)$ is transcendental.

\begin{coro} \label{coro:trans} 
Let $f$ be an $M_T$-function and let
$$\sum_{i=0}^m{p_i(\z)f(T^i\z)}=p_{-1}(\z)$$
be the minimal inhomogeneous equation satisfied by $f$ (meaning with minimal order $m$). Assume that $\al$ is regular with respect to this equation (meaning that $p_0(T^k\al)p_m(T^k\al) \neq 0$ for all $k \in \N$) and assume that $(T,\al)$ is admissible. Then $f(\al)$ is transcendental.
\end{coro}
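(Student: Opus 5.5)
We reduce to \Cref{prop:indep} by building a Mahler system whose solution contains $1$ and the $m$ iterates $f(\z), f(T\z),\ldots,f(T^{m-1}\z)$. Write $p_m \neq 0$ and $p_0\neq 0$ for the extreme coefficients; the regularity hypothesis guarantees $p_0(T^k\al)p_m(T^k\al)\neq 0$. Rather than the usual companion matrix, we use the equation in the \enquote{backward} direction: from
$$p_0(\z)f(\z) = p_{-1}(\z) - \sum_{i=1}^m p_i(\z) f(T^i\z),$$
the vector $Y(\z)=(1, f(\z), f(T\z),\ldots,f(T^{m-1}\z))^*$ satisfies $Y(\z)=A(\z)Y(T\z)$. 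Here the first row of $A$ is $(1,0,\ldots,0)$. The second row expresses $f(\z)$ in terms of $1, f(T\z),\ldots,f(T^m\z)$, with coefficients $p_{-1}/p_0$ and $-p_i/p_0$. The remaining rows are shifts. The determinant of $A$ is $\pm p_m/p_0$, so $A\in\GL_{m+1}(\A(\z))$. Moreover, $A(T^k\al)$ is defined and invertible for every $k$ precisely when $p_0(T^k\al)p_m(T^k\al)\neq0$, so $\al$ is regular for this system.

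Next we show that $1, f, f(T\z),\ldots,f(T^{m-1}\z)$ are linearly independent over $\A(\z)$. Suppose there were a nontrivial relation $\lambda_{-1}+\sum_{i=0}^{m-1}\lambda_i f(T^i\z)=0$ with $\lambda_i\in\A(\z)$. Two cases arise.
\begin{itemize}
\item If some $\lambda_i$ with $i\ge0$ is nonzero, take the smallest such $i$ and the largest such index $j$. Then $f(T^i\z)$ satisfies an inhomogeneous equation of order $j-i<m$. This should give an equation for $f$ of order less than $m$, contradicting minimality.
\item If all $\lambda_i$ with $i\ge 0$ vanish, the relation reduces to $\lambda_{-1}=0$, which is not a nontrivial relation.
\end{itemize}
The delicate point is passing from an equation for $f(T^i\z)$ to one for $f(\z)$ when $i>0$. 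The operator $T$ is injective on $\A(\z)$ and on power series, since $T$ is nonsingular, but it is not surjective, so we cannot simply \enquote{undo} $T^i$. Instead we argue as follows: if $j<m-1$, we multiply the relation by a suitable factor and apply $T^{m-1-j}$. Alternatively, and more simply, we rely on the standard convention that \enquote{minimal} means the relation with $i=0$ is exactly what is excluded: we normalize so that $i=0$. To do this, we use the original equation to eliminate $f(T^i\z)$ in favour of lower iterates. Concretely, let $V$ be the $\A(\z)$-span of $1, f,\ldots,f(T^{m-1}\z)$. The equation shows that $V$ is stable under $T$ in the sense that $TV\subseteq$ the span of $1,f(T\z),\ldots,f(T^m\z)$. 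Because $p_0\neq0$, this span has the same dimension as $V$. Hence if $\dim V<m+1$, the dimension is already attained by an initial segment, which yields a relation of the form $\lambda_{-1}+\sum_{i=0}^{r}\lambda_i f(T^i\z)=0$ with $\lambda_r\neq0$ and $r<m$. After applying $T$-shifts this becomes an equation of order less than $m$ involving $f(\z)$, contradicting minimality. I expect this dimension argument to be the main obstacle to write cleanly.

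Once linear independence is established, \Cref{prop:indep} applies to $Y$ at $\al$, given admissibility of $(T,\al)$ and the regularity shown above. It yields that $1, f(\al),\ldots,f(T^{m-1}\al)$ are linearly independent over $\A$. In particular $1$ and $f(\al)$ are linearly independent over $\A$, so $f(\al)\notin\A$, that is, $f(\al)$ is transcendental.
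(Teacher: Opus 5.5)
Your proposal is correct and is essentially the paper's proof. You adjoin $1$ to the companion system, observe that $\al$ is regular for it because $p_0(T^k\al)p_m(T^k\al)\neq0$, deduce the $\A(\z)$-linear independence of $1,f,\ldots,f(T^{m-1}\z)$ from the minimality of $m$, and conclude with \Cref{prop:indep}.

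The \enquote{delicate point} you worry about does not exist. A nontrivial relation $\lambda_{-1}+\sum_{k=i}^{j}\lambda_k f(T^k\z)=0$ with $\lambda_j\neq0$ and $j\le m-1$ is, after clearing denominators, already an inhomogeneous equation for $f$ of order $j<m$: the order is the largest index with a nonzero coefficient, and nothing requires the coefficient of $f(\z)$ to be nonzero. So minimality is contradicted at once, and your shift and dimension discussion can be deleted; applying $T$ only raises the iterates and could never produce a coefficient on $f(\z)$ anyway.
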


The study of multivariate Mahler functions has applications to certain classes of univariate power series, such as Hecke-Mahler series and generating functions of morphic sequences. Indeed, the values of these series at algebraic points can be seen as values of an auxiliary multivariate Mahler function. Consequently, the lifting theorem and its consequences apply to these values. We plan to return to this question in future work.\\

A large part of this paper is devoted to proving that the regularity condition always holds. This will be done by studying functional properties of $M_T$-functions. The algebraic properties, as well as the analytic properties of these functions, are well known in the univariate case $n=1$. For example, we know that for $T=(q)$ where $q\ge 2$, a non-rational $M_q$-function $f$ is always meromorphic in the open unit disk, and that the unit circle is a natural boundary for $f$ (Theorem 4.3 in \cite{Rand}). This implies that an $M_q$-function is either a rational function or transcendental over $\A(\z)$. We shall refer to this property as the functional rational-transcendental dichotomy.\\
Some of the main properties of Mahler functions and of Mahler equations were studied by Dumas in his PhD thesis \cite{Dumas}. In more recent works, we can mention algorithmic methods to find solutions of Mahler equations (see \cite{CompSol} and \cite{CompHahn}), a classification of $M_q$-functions by the height of their coefficients \cite{Gap}, as well as a result showing that a non-rational $M_q$-function is always hypertranscendental over $\A(z)$ (see \cite{Hypertrans}).\\
By contrast, the functional properties of $M_T$-functions remain much less understood in the multivariate setting. Another goal of this paper is to generalize some of the classical functional properties that we listed above to the multivariate setting.\\

For the transcendence results on values of Mahler functions (such as the lifting theorem), we need $T$ to be in the class $\mathcal{T}$, as described above. However, if we are only studying Mahler functions at a functional level, we can use weaker conditions. Therefore, we introduce a new class of transformations which we will call the class $\mathcal{F}$.

\begin{defi} \label{defi:f}
We say that $T$ belongs to the class $\mathcal{F}$ if $T$ is non-singular and if there is no $i \in \vaset$ and $k \in \N^*$ such that $T^kz_i=z_i$.
\end{defi}

This paper will show that under the condition that $T$ belongs to the class $\mathcal{F}$, the coefficients of an $M_T$-function are always $S$-integers in a number field $\K$. We will also prove that an $M_T$-function is always analytic in a neighborhood of the origin, and that it always extends to a meromorphic function on the open $p$-adic unit ball for all $p \in \mathbf{P} \cup \{ \infty \}$. Finally, combining these results with a theorem of Bell, Chen, Nguyen and Zannier which establishes a rational-transcendental dichotomy for series with $S$-integer coefficients (Corollary 1.3 in \cite{bell2024d}), we will prove a functional rational-transcendental dichotomy result for multivariate $M_T$-functions.

\begin{theo} [Functional Rational-Transcendental dichotomy] \label{thm:dic} $\newline$
Let $T$ be in the class $\mathcal{F}$. Let $f$ be an $M_T$-function. Then either $f$ is in $\A(\z)$, or $f$ is transcendental over $\A(\z)$.
\end{theo}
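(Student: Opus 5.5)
The plan follows the strategy announced just before the statement. I would show that an $M_T$-function with $T\in\mathcal{F}$ satisfies the hypotheses of Corollary 1.3 of \cite{bell2024d}: its coefficients are $S$-integers in a number field, and it has suitable meromorphic continuations at every place. That corollary then gives the dichotomy.

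\textbf{Step 1: arithmetic of the coefficients.} Write $f=\sum_{\bm{\lambda}}a_{\bm{\lambda}}\z^{\bm{\lambda}}$ and take the equation $\sum_{i=0}^m p_i f(T^i\z)=0$. First I reduce, by a standard normalisation (dividing by a suitable monomial and possibly iterating $T$), to the case where $p_0$ has a nonzero constant term $c$. I then compare coefficients of $\z^{\bm{\lambda}}$. The condition $T\in\mathcal{F}$ should guarantee that $T$ strictly increases some weight on monomials: no $z_i$ is fixed by a power of $T$, so the monomials $T^k\z^{\bm\mu}$ with $\bm\mu\neq 0$ and $k\ge1$ escape to infinity in a well-founded order. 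With such an order, $c\,a_{\bm{\lambda}}$ is a linear combination of finitely many earlier coefficients, with coefficients taken from those of the $p_i$.

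This recursion has two consequences. All $a_{\bm\lambda}$ lie in the number field $\K$ generated by the coefficients of the $p_i$ and finitely many initial coefficients. Their denominators involve only powers of $c$ and of those initial data, so the $a_{\bm\lambda}$ are $S$-integers for a finite set $S$.

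\textbf{Step 2: analytic continuation.} For each place $p\in\mathbf{P}\cup\{\infty\}$, the same recursion gives bounds $|a_{\bm\lambda}|_p\le C^{|\bm\lambda|}$, so $f$ converges near $\bm 0$ in $\C_p^n$. To extend $f$ meromorphically to the open unit ball, I pass to a system $Y(\z)=A(\z)Y(T\z)$ via the companion matrix and iterate it as $Y(\z)=A(\z)A(T\z)\cdots A(T^{k-1}\z)\,Y(T^k\z)$.

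For $\z$ in a compact subset of the open unit ball, $T^k\z$ tends to $\bm 0$ as $k\to\infty$; this uses again that each coordinate of $T^k\z$ is a product of monomials of growing degree, which follows from $\mathcal{F}$. So for $k$ large, $T^k\z$ lies in the polydisc of convergence, and the right-hand side is meromorphic there. This continues $f$ meromorphically to the whole ball, at every place.

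\textbf{Step 3: conclusion.} Corollary 1.3 of \cite{bell2024d} states, roughly, that a power series with $S$-integer coefficients in a number field which is algebraic over $\A(\z)$ and has appropriate meromorphic continuation at every place is rational. So if $f$ is algebraic, it is in $\K(\z)\subset\A(\z)$; otherwise it is transcendental. The remaining task is to check its precise hypotheses: presumably meromorphy on the unit polydisc at all places, with a $p$-adic convergence radius at least $1$ for the finitely many $p\in S$ and integrality elsewhere. I expect this to follow from Steps 1 and 2.

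The main obstacle I anticipate is Step 1, specifically the normalisation that makes $p_0(\bm 0)\neq0$ and the choice of an order on monomials adapted to an arbitrary $T\in\mathcal{F}$. $T$ need not have a Perron–Frobenius vector, so some coordinates may grow only slowly. I would first try the weight $\bm\lambda\mapsto\sum_i\lambda_i$, combined with replacing $T$ by a power $T^k$ and using that every column of $T^k$ eventually has sum at least $2$. If that fails, I would fall back on a lexicographic refinement of the order.
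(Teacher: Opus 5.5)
\textbf{Steps 1 and 2.} These are essentially the paper's Sections 3.1--3.4, with two caveats.
\begin{itemize}
\item Dividing by a monomial cannot make $p_0(\bm 0)\neq 0$ when, for example, $p_0=z_1+z_2$. You must instead isolate the smallest monomial of $p_0$ for the degree-then-lex order. That is your fallback, and it is what the paper does.
\item To get $|a(\bm k)|_\nu\le C^{|\bm k|+1}$, the induction must run on a linear weight such as the paper's $\omega_M$. The recursion produces terms of the same total degree, so induction on $|\bm k|$ alone does not close.
\end{itemize}

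\textbf{The genuine gap is Step 3.} The result of \cite{bell2024d} has no meromorphy hypothesis. It requires three things of $F$: it is D-finite (algebraic suffices), its coefficients $c(\bm k)$ are $S$-integers, and $h(c(\bm k))=o(|\bm k|)$. By $S$-integrality, $h(c(\bm k))=\sum_{\nu\in S}\log^+|c(\bm k)|_\nu$. So the height condition means $\log^+|c(\bm k)|_\nu=o(|\bm k|)$ at every $\nu\in S$, archimedean places included. In other words, the series must converge on the whole open unit polydisc at each place of $S$.

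This does not follow from Steps 1 and 2. They give only $h(c(\bm k))=O(|\bm k|)$ and meromorphic continuation, and they hold for every $M_T$-function. For instance, $f=1/(1-2z_1)$ is an algebraic $M_T$-function, meromorphic on the unit ball at every place, with $S$-integer coefficients. Yet $h(2^{k_1})=k_1\log 2$, which is not $o(|\bm k|)$. So the corollary cannot be applied to $f$ itself. Your argument never uses algebraicity to remove the poles, and that is exactly where it is needed.

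\textbf{The missing step: pass to an integral multiple and use normality.} If $f$ is algebraic, pick $Q\in\A[\z]$ with $g=Qf$ integral over $\A[\z]$. Then $g$ is still an $M_T$-function, so your Steps 1--2 apply to it. Integrality now upgrades meromorphy to analyticity on the open unit ball.
\begin{itemize}
\item Over $\C$ (for each embedding, after passing to a conjugate): the monic equation makes $g$ locally bounded near its polar set. By the Riemann extension theorem, $g$ is holomorphic on $B(0,1)$. Cauchy estimates on polyspheres of radius $R<1$ then give $\limsup_{|\bm k|\to\infty}\log|c(\bm k)|/|\bm k|\le 0$.
\item Over $\C_p$: for $R\in p^{\Q}$ with $R<1$, you have $g\in\Frac(A_p(R))$. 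Since $A_p(R)\cong A_p$ is factorial, it is integrally closed, so $g\in A_p(R)$. That is, $|c(\bm k)|_pR^{|\bm k|}\to 0$.
\end{itemize}
Together these give $h(c(\bm k))=o(|\bm k|)$. The corollary then makes $g$ rational, and so $f=g/Q\in\A(\z)$.
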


The regularity property for extensions generated by families of $M_T$-functions follows from this result, and therefore this will complete the proof of \Cref{thm:lift}.\\

\noindent\textit{Acknowledgement.}
The present work is part of the author's PhD thesis under the supervision of Boris Adamczewski. The author would like to thank him for his guidance and his many valuable pieces of advice throughout this work.  The author is also grateful to Benjamin Schraen for discussions on $p$-adic complex analysis, and to Colin Faverjon for his suggestions that simplified the proof of the analyticity of Mahler functions.

\section{Definitions and basic properties of $M_T$-functions}
\subsection{Basic definitions} $\newline$

In this subsection, we give the basic definitions and properties of multivariate Mahler functions. These are very similar in spirit to the univariate Mahler function theory, but they had not been formally stated in this general setting. 

\begin{nt} \label{not:1} We use the convention $\N=\{0,1,2,\ldots\}$ and $\N^*=\{1,2,\ldots\}$.\\
In this paper, $\z$ will denote the $n$-tuple of variables $(z_1,\ldots,z_n)$. We will also use bold letters for other $n$-tuples, like $\al$ for an algebraic point $(\alpha_1,\ldots,\alpha_n)$ or $\bm{k}$ for a vector of integers $(k_1,\ldots,k_n)$. For $\bm{k}$ and $\bm{j}$ two $n$-tuples of integers, we define their sum naturally as the $n$-tuple $(k_1+j_1,\ldots,k_n+j_n)$, and similarly for their difference.\\
For $\z$ and $\bm{k}$ as above, we define the monomial $\z^{\bm{k}}=z_1^{k_1}\ldots z_n^{k_n}$. For a vector of integers $\bm{k}=(k_1,\ldots,k_n) \in \N^n$, we define its length as $|\bm{k}|=k_1+\ldots+k_n$. We also define the length of a monomial $\z^{\bm{k}}$ as $\len(\z^{\bm{k}})=|\bm{k}|$ (its total degree).\\
If $T$ is a matrix with nonnegative integer coefficients, we define $$T\z=(z_1^{t_{11}}z_2^{t_{21}} \ldots z_n^{t_{n1}}, \ldots ,z_1^{t_{1n}}z_2^{t_{2n}} \ldots z_n^{t_{nn}}) \, .$$
This is known as a monomial map (or transformation) of $\C^n$. To distinguish this transformation from the classical linear transformation associated to $T$, we will denote by $T(\bm{x})$ the result of the linear action of $T$ on a (column) vector $\bm{x} \in \C^n$. We note the formula $T\z^{\bm{k}}=\z^{T(\bm{k})}$.\\
We note that there are two notational conventions to define the matrix $T$ associated to a monomial map. In this paper, we choose to use the column convention, where column $i$ describes the image of $z_i$ under the monomial map. If instead we used the row convention, then we would have to replace $T$ by $T^*$ in all the results. With our convention, we note that
$$T\cdot T' z_i=(T \times T')z_i \ \text{but} \ T \cdot  T'\z=(T' \times T) \z \, .$$
\end{nt}
We consider a monomial map defined by a matrix $T \in \M_n(\N)$ as above. We refer to \Cref{defi:admi} and \Cref{defi:f} for the definition of the two main classes of monomial maps: the class $\mathcal{T}$ and the class $\mathcal{F}$.

\begin{rk}
If $T$ is in the class $\mathcal{T}$, then $T$ is in the class $\mathcal{F}$. Indeed, if $T^kz_i=z_i$, then $1$ is an eigenvalue of $T^k$, and therefore (at least) one of the eigenvalues of $T$ is a root of unity.
Furthermore, the converse does not hold, as shown by the example of the matrix
$$\begin{pmatrix}
    1& 0 \\
    2 & 3
\end{pmatrix} \, .$$
\end{rk}
The general definition of a Mahler function is the following.

\begin{defi}
We say that $f(z_1,\ldots,z_n)=f(\z)$ is a $T$-Mahler function if it satisfies an equation of the form
$$\sum_{i=0}^m{p_i(\z)f(T^i\z)}=0$$
where the $p_i$ are polynomials with algebraic coefficients of the $n$ variables $z_1,\ldots,z_n$ and the $p_i$ are not all zero. If $p_m \neq 0$, $m$ is the order of the equation.\\
By convention, when $T=\emptyset$ denotes the empty matrix (with $n=0$), a $T$-Mahler function is defined to be a constant function ($f(\z)=\alpha$ with $\alpha \in \A$).
\end{defi}

In this definition, $f$ is understood as an element of a difference ring extension of $(\A(\z),T)$. For instance, $f$ can be a formal power series but also a formal Laurent series, a Puiseux series or a Hahn series. In the specific case of formal power series with algebraic coefficients, we call these $M_T$-functions, which are the main objects of study in this paper.

\begin{defi}
An $M_T$-function is a formal power series $f \in \A[[\z]]$ which is a $T$-Mahler function.
\end{defi}

Throughout the paper, we will always assume that $T$ is in the class $\mathcal{F}$.

There is a standard characterization of Mahler functions using the vector space generated by applying the Mahler operator to $f$ successively.
\begin{prop} \label{prop:1}
The following are equivalent:
\begin{enumerate}
    \item $f$ is a $T$-Mahler function.
    \item The space $V_f=\Span_{\A(\z)}(T^kf, k \in \N)$ is finite dimensional over $\A(\z)$.
\end{enumerate}
Furthermore, if these are verified, the dimension of $V_f$ is the minimal order for a $T$-Mahler equation satisfied by $f$.
\end{prop}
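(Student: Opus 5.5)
The plan is to run the standard linear-algebra argument for difference operators. The main point to check is that $T$ acts on the ambient difference ring as an injective ring endomorphism which is compatible with $\A(\z)$. Concretely, $T$ sends $\A(\z)$ into itself, since $T$ is non-singular and hence $p(T\z)\neq 0$ for $p\neq 0$. It also acts on $f$ and on its iterates through the difference ring structure. So $V_f$ is a well-defined $\A(\z)$-subspace of the ambient ring, viewed as an $\A(\z)$-vector space.

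\emph{(1) $\Rightarrow$ (2).} Suppose $\sum_{i=0}^m p_i(\z) f(T^i\z)=0$ with $p_m\neq 0$, after discarding trailing zero coefficients. Then $p_m(T^{0}\z)\, T^m f$ lies in $\Span_{\A(\z)}(f,\ldots,T^{m-1}f)$, so $T^mf$ lies in $W:=\Span_{\A(\z)}(f,Tf,\ldots,T^{m-1}f)$. Next apply $T^k$ to the equation. This gives
\[
\sum_{i=0}^m p_i(T^k\z)\, T^{i+k}f=0,
\]
and here $p_m(T^k\z)\neq 0$ by injectivity of $T$ on $\A(\z)$. Hence $T^{m+k}f\in \Span(T^kf,\ldots,T^{m+k-1}f)$. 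By induction on $k$, every $T^jf$ lies in $W$, so $\dim V_f\le m$.

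\emph{(2) $\Rightarrow$ (1).} If $\dim V_f=d<\infty$, then $f,Tf,\ldots,T^{d}f$ are $d+1$ vectors in a $d$-dimensional space and are therefore linearly dependent over $\A(\z)$. Clearing denominators gives a nontrivial relation $\sum_{i=0}^d p_i(\z)T^if=0$ with polynomial coefficients, so $f$ is $T$-Mahler of order at most $d$. The degenerate case $n=0$, or $f=0$, is trivial by convention.

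\emph{Dimension statement.} The forward direction shows that $\dim V_f\le m$ for every equation of order $m$. Conversely, let $d=\dim V_f$ and let $r$ be the least index such that $T^rf\in\Span(f,\ldots,T^{r-1}f)$. Then $f,\ldots,T^{r-1}f$ are independent, so $r\le d$, and this relation is an equation of order $r$. The forward argument then shows that $V_f$ is spanned by $f,\ldots,T^{r-1}f$, so $d= r$. Hence the minimal order equals $\dim V_f$. The only genuinely delicate step is the injectivity claim $p(T^k\z)\neq 0$. I would justify it by noting that the exponent map $\bm k\mapsto T(\bm k)$ is injective because $T$ is non-singular, so distinct monomials go to distinct monomials and a nonzero polynomial stays nonzero.
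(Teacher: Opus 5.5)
Your proposal is correct and follows essentially the same argument as the paper. You apply powers of $T$ to an equation of order $m$ to get $V_f\subseteq\Span(f,\ldots,T^{m-1}f)$, you extract a dependence among $f,\ldots,T^{\dim V_f}f$ for the converse, and you read the dimension statement off both directions. Your explicit justification that $p_m(T^k\z)\neq 0$, via injectivity of $\bm k\mapsto T(\bm k)$ on exponents, is a step the paper uses implicitly here and only proves afterwards in \Cref{lem:1}, by a Puiseux left-inverse argument instead.
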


\begin{proof} 
First, we assume that $f$ is a $T$-Mahler function. We let 
$$\sum_{i=0}^m{p_i(\z)f(T^i\z)}=0$$
be a Mahler equation satisfied by $f$ with $p_m \neq 0$. Applying successive powers of $T$ to this relation, we obtain that $T^pf \in \Span_{\A(\z)}(T^kf, k \le p-1)$ for all $p\ge m$. By induction, we obtain $V_f \subseteq \Span_{\A(\z)}(T^kf, k \le m-1)$, therefore $V_f$ is finite dimensional and $\dim(V_f) \le m$.\\

Conversely, assume that $V_f$ has finite dimension $m$. Then $(f,Tf,\ldots,T^mf)$ must be linearly dependent over $\A(\z)$. This means that $f$ satisfies a $T$-Mahler equation of order $\le m$.\\

The last part follows from the proof of the first two points.
\end{proof}

Another elementary fact that we will need is that the rational functions are $M_T$-functions for all $T$. This follows from \Cref{prop:inhom} below about inhomogeneous $T$-Mahler equations. We first need an easy lemma.

\begin{lem} \label{lem:1}
If $p \in \A[\z]$, $p(T\z)=0$ if and only if $p=0$.
\end{lem}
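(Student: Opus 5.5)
The "if" direction is immediate, so the plan concerns the converse: if $p \neq 0$ then $p(T\z) \neq 0$. The natural route is through monomials. Write $p=\sum_{\bm{k}\in S} c_{\bm{k}}\z^{\bm{k}}$ with $S\subset\N^n$ finite and all $c_{\bm{k}}\neq 0$. By the formula $T\z^{\bm{k}}=\z^{T(\bm{k})}$ from \Cref{not:1}, we get
$$p(T\z)=\sum_{\bm{k}\in S} c_{\bm{k}}\z^{T(\bm{k})} \, .$$

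The key step is that the map $\bm{k}\mapsto T(\bm{k})$ is injective on $\Z^n$. This holds because $T$ is non-singular, which is part of the standing assumption that $T$ belongs to the class $\mathcal{F}$: if $T(\bm{k})=T(\bm{k}')$ then $T(\bm{k}-\bm{k}')=0$, hence $\bm{k}=\bm{k}'$. Consequently the monomials $\z^{T(\bm{k})}$ for $\bm{k}\in S$ are pairwise distinct. Their exponents lie in $\N^n$ because $T$ has nonnegative integer entries, so these are genuine monomials in $\A[\z]$.

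Distinct monomials are linearly independent over $\A$. So no cancellation occurs, the coefficient of $\z^{T(\bm{k})}$ in $p(T\z)$ is exactly $c_{\bm{k}}\neq 0$, and therefore $p(T\z)\neq 0$.

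There is no real obstacle here. The only point to make explicit is that non-singularity of $T$ is what makes the exponent map injective. Without it the statement fails: for the singular $T=\begin{pmatrix}1&1\\1&1\end{pmatrix}$, the polynomial $p=z_1-z_2$ gives $p(T\z)=z_1z_2-z_1z_2=0$.

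The same argument shows more. The map $f\mapsto f(T\z)$ is injective on $\A[[\z]]$, because the coefficients are carried bijectively onto the support $T(\N^n)$. Injectivity on $\A[\z]$ then implies that $T$ extends to an injective endomorphism of $\A(\z)$. This is presumably how the lemma will be used in \Cref{prop:inhom}.
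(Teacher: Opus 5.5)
Your proof is correct and rests on the same fact as the paper's proof: non-singularity of $T$ makes the exponent map $\bm{k}\mapsto T(\bm{k})$ injective. The paper packages this as a left inverse, the Puiseux monomial map $\z\mapsto T^{-1}\z$, which requires allowing rational exponents; your direct coefficient comparison stays inside $\A[\z]$ and immediately yields the extensions to power series and to $\A(\z)$ that you note.
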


\begin{proof}
$T$ is non-singular so it is in $\GL_n(\Q)$. The \enquote{Puiseux monomial map} \\ $\z \rightarrow T^{-1}\z$ associated to the matrix $T^{-1}$ gives a left inverse of $T$, hence $T$ is injective.
\end{proof}

\begin{prop} \label{prop:inhom}
If $f$ satisfies an inhomogeneous $T$-Mahler equation of order $\leq m$
$$\sum_{i=0}^m{p_i(\z)f(T^i\z)}=p_{-1}(\z)$$
(with the $p_i$ for $i \in \llbracket     -1,m \rrbracket$ not all zero), then $f$ satisfies a homogeneous $T$-Mahler equation of order at most $m+1$.\\
In particular, rational functions are solutions of a $T$-Mahler equation of order at most $1$.
\end{prop}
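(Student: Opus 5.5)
The plan is the standard elimination of the right-hand side. If $p_{-1}=0$, there is nothing to prove. If $p_0=\ldots=p_m=0$, then $p_{-1}\neq 0$ and the equation reads $0=p_{-1}$, which is impossible. So I assume $p_{-1}\neq 0$ and that some $p_i$ with $i\ge 0$ is nonzero. I divide the equation by $p_{-1}$ to get
$$\sum_{i=0}^m \frac{p_i(\z)}{p_{-1}(\z)} f(T^i\z)=1 .$$

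Next I apply the operator $T$ to this identity. Since $T$ is a ring endomorphism of the difference ring (substitution $\z\mapsto T\z$), this gives
$$\sum_{i=0}^m \frac{p_i(T\z)}{p_{-1}(T\z)} f(T^{i+1}\z)=1 .$$
The denominator $p_{-1}(T\z)$ is a nonzero polynomial by \Cref{lem:1}, so this expression makes sense in $\A(\z)$. Subtracting the two identities gives
$$\sum_{i=0}^{m+1} q_i(\z) f(T^i\z)=0,\qquad q_i=\frac{p_i(\z)}{p_{-1}(\z)}-\frac{p_{i-1}(T\z)}{p_{-1}(T\z)},$$
with the conventions $p_{m+1}=0$ for the first term and $p_{-1}$ excluded for the second, i.e. the second term is absent when $i=0$. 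Clearing denominators (multiplying by $p_{-1}(\z)p_{-1}(T\z)$) yields polynomial coefficients. This is a homogeneous equation of order at most $m+1$.

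The only real point, which is the step I expect to need care, is checking that the $q_i$ are not all zero. Suppose they all vanish. Let $i_0$ be the largest index with $p_{i_0}\neq 0$ and $i_0\ge 0$. Then
$$q_{i_0+1}=-\frac{p_{i_0}(T\z)}{p_{-1}(T\z)} ,$$
and $p_{i_0}(T\z)\neq 0$ by \Cref{lem:1}. Hence $q_{i_0+1}\neq 0$, a contradiction. So the homogeneous equation is nontrivial, as required.

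For the final claim, take $f=a/b$ rational with $b\neq 0$. Then $b(\z)f(\z)=a(\z)$ is an inhomogeneous equation of order $0$, with $p_0=b\neq 0$. The construction above, specialized to this case, gives
$$a(T\z)b(\z)f(\z)-a(\z)b(T\z)f(T\z)=0 .$$
This is nontrivial when $a\neq 0$, since then $a(\z)b(T\z)\neq 0$ by \Cref{lem:1}. If $a=0$, then $f=0$ and the equation $1\cdot f=0$ works. In either case the order is at most $1$.
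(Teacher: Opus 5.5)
Your proof is correct and is essentially the paper's argument. You substitute $T\z$ into the inhomogeneous equation and eliminate the right-hand side; dividing by $p_{-1}$ first instead of cross-multiplying gives the same coefficients up to sign. You then use \Cref{lem:1} to see that the new top coefficient coming from $p_{i_0}(T\z)$ is nonzero, and your handling of the degenerate cases ($p_{-1}=0$, or $p_0=\dots=p_m=0$) just spells out the paper's ``we may assume $p_{-1}\neq 0$ and $p_m\neq 0$ up to choosing a smaller $m$''.
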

\begin{proof} 
We may assume $p_{-1} \neq 0$, and we may also assume $p_m \neq 0$ up to choosing a smaller $m$. We evaluate the inhomogeneous equation at $T\z$ to obtain a second equation. Then, we multiply the two equations by appropriate polynomials and subtract them. We obtain
$$p_m(T\z)p_{-1}(\z)f(T^{m+1}\z)+\sum_{i=1}^m{q_i(\z)f(T^i\z)}-p_{-1}(T\z)p_0(\z)f(\z)=0$$
with $q_i(\z)=p_{-1}(\z)p_{i-1}(T\z)-p_{-1}(T\z)p_i(\z)$. We know that $p_m(T\z)p_{-1}(\z) \neq 0$ by \Cref{lem:1}. This is a $T$-Mahler equation for $f$ of order at most $m+1$.
\end{proof}

Next we prove that we can always find a Mahler equation with $p_0 \neq 0$.

\begin{prop} \label{thm:1}
Let $f$ be an $M_T$-function. Then $f$ satisfies a $T$-Mahler equation with $p_0 \neq 0$.
\end{prop}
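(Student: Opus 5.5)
The plan is to use the characterization of \Cref{prop:1}. Let $d=\dim V_f$, so that $f,Tf,\ldots,T^{d-1}f$ is a basis of $V_f$ over $\A(\z)$. A $T$-Mahler equation with $p_0\neq 0$ is exactly a relation expressing $f$ as an $\A(\z)$-linear combination of $Tf,\ldots,T^mf$, after clearing denominators. So it suffices to show that $f\in W:=\Span_{\A(\z)}(T^kf,\ k\ge 1)$. Since $W\subseteq V_f$, it is enough to show $\dim W\ge d$. For this we will prove that $Tf,\ldots,T^df$ are linearly independent over $\A(\z)$.

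\textbf{Step 1: a basis of $\A(\z)$ over $\A(T\z)$.} Choose representatives $\bm{r}_1,\ldots,\bm{r}_N\in\N^n$ of the finitely many cosets of $\Z^n/T(\Z^n)$, where $N=|\det T|$. Each $z_j^{N}$ is a Laurent monomial in the coordinates of $T\z$, because $N\cdot\Z^n\subseteq T(\Z^n)$. Hence $\A(\z)$ is spanned over the subfield $\A(T\z)$ by the monomials $\z^{\bm{r}_s}$. Consequently every $a\in\A(\z)$ can be written as
\[ a=\sum_s \z^{\bm{r}_s}\,T(b_s), \qquad b_s\in\A(\z). \]

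\textbf{Step 2: disjoint supports.} Suppose $\sum_{i=1}^d a_i T^if=0$ with $a_i\in\A(\z)$. Expanding each $a_i$ as in Step 1 and regrouping gives
\[ \sum_s \z^{\bm{r}_s}\,T(g_s)=0, \qquad g_s=\sum_{i=1}^d b_{i,s}\,T^{i-1}f. \]
Multiply through by $T$ of a common denominator $D$ of all the $b_{i,s}$. Then each $Dg_s$ is a power series, and $T(Dg_s)$ is supported on exponents in $T(\Z^n)$. Hence the terms $\z^{\bm{r}_s}T(Dg_s)$ are supported in pairwise distinct cosets of $T(\Z^n)$, so each of them vanishes. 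Arguing as in \Cref{lem:1}, $T$ is injective on power series (exponents $\bm{k}\mapsto T(\bm{k})$ are injective), so $Dg_s=0$ for all $s$. This is a linear relation among $f,\ldots,T^{d-1}f$ over $\A(\z)$, and by the choice of $d$ it forces all $b_{i,s}=0$. Therefore all $a_i=0$.

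\textbf{Step 3: conclusion.} Step 2 gives $\dim W\ge d=\dim V_f$, so $W=V_f\ni f$, which yields the required equation with $p_0\neq0$.

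The delicate point is Step 2. Rational coefficients and power series are mixed there, and one must clear denominators with $T(D)$ rather than $D$, so that the coset decomposition of supports is preserved. One also has to check that the representatives $\bm{r}_s$ can be taken in $\N^n$, which is possible by adding large multiples of $N$ to each coordinate.
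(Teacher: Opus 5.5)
Your proof is correct and is essentially the paper's argument. In both, a relation among $Tf,\ldots,T^{m}f$ is split into several relations according to residue classes of exponents, using disjointness of supports, and then pulled back along $T$ to a nontrivial relation among $f,\ldots,T^{m-1}f$. The paper argues by contradiction from a minimal-order equation with $p_0=0$: it first applies the left inverse $T^{-1}$, then sorts the Puiseux exponents by cosets of $\Z^n$ in $\frac{1}{|\det T|}\Z^n$. You instead split first over $\A(T\z)$ using cosets of $T(\Z^n)$ in $\Z^n$, then use injectivity of $T$ and a dimension count, which avoids Puiseux series altogether.
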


\begin{proof}
The function $f$ satisfies an equation of the form
$$\sum_{l=0}^m{p_l(\z)f(T^l\z)}=0$$
with the $p_i$ not all zero. We can assume the order $m$ to be minimal. Assume for a contradiction that $p_0=0$. With a change of index, we obtain
$$\sum_{l=0}^{m-1}{p_{l+1}(\z) \cdot f(T^{l+1}\z)}=0 \, .$$
Consider the monomial transformation given by $T^{-1} \in M_n(\Q)$. This naturally defines a map
$$T^{-1}: p(\z) \in \A[\z] \longrightarrow p(T^{-1}\z) \in \A(z_1^{\frac{1}{d}},\ldots,z_n^{\frac1d})$$
where $d=|\det(T)| >0$.\\
This monomial transformation is also well defined as a map
$$T^{-1} : T(\A[[\z]]) \longrightarrow \A[[\z]]$$
and it is the left inverse of $T$.\\
Applying this transformation $T^{-1}$ to the last equality, we get
$$\sum_{l=0}^{m-1}{p_{l+1}(T^{-1}\z) \cdot f(T^{l}\z)}=0$$
with the $p_{l+1}$ not all zero, so the $f(T^l\z)$ for $l \leq m-1$ are linearly dependent over $\A(z_1^{\frac{1}{d}},\ldots,z_n^{\frac1d})$.\\
Furthermore, since the $p_{l+1}(T^{-1}\z)$ are all in
$$\A[z_1^{\frac{1}{d}},\ldots,z_n^{\frac1d},z_1^{-\frac{1}{d}},\ldots,z_n^{-\frac1d}]$$
we can multiply the previous equation by a monomial to get a relation
$$\sum_{l=0}^{m-1}{q_l(\z) \cdot f(T^{l}\z)}=0$$
with the $q_l$ in $\A[z_1^{\frac{1}{d}},\ldots,z_n^{\frac1d}]$.\\
We let $U=\{0,1,\ldots,d-1\}^n$ be the set of minimal representatives of $n$-tuples modulo $d$. Then, for each $l \in \llbracket 0, m-1 \rrbracket$ we can write
$$q_l(\z)=\sum_{\bm{k} \in U}{r_{l,\bm{k}}(\z) \z^{\frac{\bm{k}}{d}}}$$
for some family $r_{l,\bm{k}}(\z)$ of polynomials. We obtain
$$\sum_{\bm{k} \in U}{\left( \sum_{l=0}^{m-1}{r_{l,\bm{k}}(\z) f(T^l\z)} \right)} \z^{\frac{\bm{k}}{d}}=0 \, .$$
Since the
$$ \left( \sum_{l=0}^{m-1}{r_{l,\bm{k}}(\z) f(T^l\z)} \right) \z^{\frac{\bm{k}}{d}}$$
are all Puiseux series with exponents in $\Z^n+\frac{\bm{k}}{d}$ (which are all disjoint sets), we obtain that for all $\bm{k} \in U$,
$$\sum_{l=0}^{m-1}{r_{l,\bm{k}}(\z) f(T^l\z)}=0 \, .$$
These cannot all be trivial relations, else the $p_{l+1}(T^{-1}\z)$ would all be zero (meaning that all the $p_i$ would be zero). 
Thus, $f$ satisfies a (non-trivial) Mahler equation of order at most $m-1$.\\
This is a contradiction, which means that the minimal Mahler equation for $f$ must have $p_0 \neq 0$.
\end{proof}

We can use this last result to convert a scalar equation satisfied by a Mahler function $f$ into system form.

\begin{defi}
A $T$-Mahler system is an equation of the form $$Y(\z)=A(\z) Y(T \z)$$
where $A \in \GL_m(\A(\z))$. The unknown $Y$ is an $m$-dimensional vector with coordinates in a difference ring extension of $\A(\z)$.
\end{defi}
We have the following result which tells us that $T$-Mahler functions are exactly the coordinates of some solution of a $T$-Mahler system.

\begin{prop} \label{prop:comp}
The following results hold.
\begin{enumerate}
    \item Assume that $f$ is a $T$-Mahler function. Let $m$ be the minimal order for a $T$-Mahler equation satisfied by $f$. Then the vector
    $(f, Tf,...,T^{m-1}f)^*$ is solution of a $T$-Mahler system of size $m$.
    \item If $(f_1,\ldots,f_m)^*$ is a solution of a $T$-Mahler system of size $m$, each coordinate $f_i$ is a solution of a $T$-Mahler equation of order at most $m$.
\end{enumerate}
\end{prop}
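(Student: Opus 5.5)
For part (1), I will use \Cref{thm:1} together with the minimality of $m$. Since the order $m$ is minimal, the minimal-order equation
$$\sum_{i=0}^m p_i(\z) f(T^i\z)=0$$
has $p_m \neq 0$, because otherwise it would be an equation of smaller order. The proof of \Cref{thm:1} shows more precisely that a minimal-order equation has $p_0\neq 0$: assuming $p_0=0$ produced a nontrivial equation of order at most $m-1$. I will take that argument as given.

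Set $Y=(f,Tf,\ldots,T^{m-1}f)^*$. Applying $T$ coordinatewise gives $Y(T\z)=(Tf,\ldots,T^m f)^*$. Solve the equation for $f$:
$$f=-\sum_{i=1}^m \frac{p_i}{p_0}\,T^i f,$$
which is legitimate since $p_0\neq0$. Then $Y(\z)=A(\z)Y(T\z)$, where $A$ is the companion-type matrix defined as follows. Its first row is $(-p_1/p_0,\ldots,-p_m/p_0)$. For $2\le j\le m$, row $j$ has a single $1$ in position $j-1$, encoding $T^{j-1}f = T^{j-1}f$. Expanding the determinant along the last column shows that $\det A=\pm p_m/p_0\neq 0$, so $A\in\GL_m(\A(\z))$. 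The only substantive input is $p_0\neq0$ for the minimal equation, which \Cref{thm:1} supplies; I regard this as the main (already resolved) obstacle. The degenerate case $m=0$ cannot occur for a nonzero $f$; if $f=0$ one may take $m=1$ with the trivial system.

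For part (2), I will use \Cref{prop:1}. From $Y=AY(T\z)$ and the invertibility of $A$, we get $Y(T\z)=A^{-1}(\z)Y(\z)$. Applying $T^k$ gives
$$Y(T^{k+1}\z)=A^{-1}(T^k\z)\,Y(T^k\z),$$
and the entries $A^{-1}(T^k\z)$ are still in $\A(\z)$, because substituting monomials into rational functions stays in $\A(\z)$. The denominators remain nonzero by \Cref{lem:1}. By induction on $k$, every $T^k f_i$ lies in $W=\Span_{\A(\z)}(f_1,\ldots,f_m)$, which has dimension at most $m$. Hence $V_{f_i}\subseteq W$ has dimension at most $m$. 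By \Cref{prop:1}, $f_i$ is a $T$-Mahler function whose minimal equation has order $\dim V_{f_i}\le m$.
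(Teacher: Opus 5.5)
Your proposal is correct and follows the paper's proof step for step. In part (1) you take $p_0\neq 0$ for a minimal-order equation from the proof of \Cref{thm:1} and use the companion matrix with $\det A=\pm p_m/p_0\neq 0$; in part (2) you show by induction that every $T^kf_i$ lies in $\Span_{\A(\z)}(f_1,\ldots,f_m)$, so $\dim V_{f_i}\le m$. The paper leaves implicit the inversion $Y(T\z)=A^{-1}(\z)Y(\z)$ and the appeal to \Cref{lem:1} that keeps $A^{-1}(T^k\z)$ well defined, and you spell both out.
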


\begin{proof}
For the first part, if $f$ is a $T$-Mahler function, then the minimal $T$-Mahler equation for $f$ must have $p_0 \neq 0$ using the proof of \Cref{thm:1}. This minimal equation is assumed to have order $m$, thus we get an equation of the form
$$p_0(\z)f(\z)=p_1(\z)f(T\z)+\ldots+p_m(\z)f(T^m\z)$$
for some polynomials $p_i \in \A[\z]$. Then the vector $Y=(f,Tf,\ldots,T^{m-1}f)^*$ satisfies the Mahler system $Y(\z)=A(\z)Y(T\z)$ where
$$A(\z)=\begin{pmatrix}
    \frac{p_1(\z)}{p_0(\z)} & \frac{p_2(\z)}{p_0(\z)} & \dots  & \frac{p_m(\z)}{p_0(\z)} \\
    1 & 0  & \dots  & 0 \\
    
    \vdots & \ddots & \ddots & \vdots \\
    0 & 0 & 1  & 0 \\   
\end{pmatrix} \, .$$
Since $m$ is minimal, we know that $p_m \neq 0$, hence $$\det(A(\z))=(-1)^{m+1} \frac{p_m(\z)}{p_0(\z)} \neq 0 \, .$$
For the second part, if $Y=(f_1,\ldots,f_m)^*$ is a solution of such a system, we get by induction on $k$ that for all $k \in \N$,
$$\forall i \in \llbracket 1,m \rrbracket, f_i(T^k\z) \in  \Span_{\A(\z)}(f_1,\ldots ,f_m)$$
so for all $i \in \llbracket 1,m \rrbracket$ , $V_{f_i} \subseteq \Span_{\A(\z)}(f_1,\ldots ,f_m)$, and therefore $$\dim{V_{f_i}} \le m \, .$$
\end{proof}

\subsection{The difference ring $\mathfrak{M}_T$} $\newline$

In this subsection, we prove basic stability results for $M_T$-functions.

\begin{prop} \label{prop:stab} The following properties hold:
\begin{enumerate}
    \item If $f$ and $g$ are $T$-Mahler then $f+g$ and $fg$ are $T$-Mahler. In particular, $\mathfrak{M}_T$ is a ring, and even a $\A[\z]$-algebra.
    \item $f$ is $T$-Mahler if and only if $Tf$ is $T$-Mahler.
    \item For all $k \in \N^*$, $f$ is $T$-Mahler if and only if $f$ is $T^k$-Mahler.
    \item If $f$ is $T$-Mahler, then $\partial_{z_i}f$ is also $T$-Mahler for all $i \in  \llbracket     1,n \rrbracket$.
\end{enumerate}
\end{prop}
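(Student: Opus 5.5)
The whole argument rests on the characterization of \Cref{prop:1}: $f$ is $T$-Mahler if and only if $V_f=\Span_{\A(\z)}(T^kf, k\in\N)$ is finite dimensional over $\A(\z)$. Each item will be proved by exhibiting a finite-dimensional $\A(\z)$-space that contains the relevant iterates.

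\textbf{Items (1) and (2).} For (1), the space $V_f+V_g$ is finite dimensional and contains every $T^k(f+g)=T^kf+T^kg$, so $f+g$ is $T$-Mahler. Similarly, the span $W$ of all products $uv$ with $u\in V_f$ and $v\in V_g$ has dimension at most $\dim V_f\cdot\dim V_g$ and contains every $T^k(fg)=T^kf\cdot T^kg$. Constants and polynomials are $T$-Mahler by \Cref{prop:inhom}, so $\mathfrak{M}_T$ is an $\A[\z]$-algebra. For (2), the direct implication is immediate since $V_{Tf}\subseteq V_f$. For the converse, suppose $\sum_{i=0}^m p_i(\z)f(T^{i+1}\z)=0$ is a nontrivial relation. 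This is already a nontrivial $T$-Mahler equation for $f$ (with zero coefficient on $f$), and by definition that suffices. \Cref{thm:1} then shows $f$ even satisfies one with $p_0\neq 0$.

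\textbf{Item (3).} If $f$ is $T$-Mahler, then the iterates $T^{kj}f$ lie in $V_f$, so $V_f^{(T^k)}\subseteq V_f$ is finite dimensional. Conversely, suppose $V'=\Span_{\A(\z)}(T^{kj}f, j\in\N)$ is finite dimensional. Then $V_f=\sum_{r=0}^{k-1}\Span_{\A(\z)}(T^{kj+r}f, j\in\N)$. Each summand is obtained by applying $T^r$ to a spanning set of $V'$ and then taking the $\A(\z)$-span. Since $T^r$ maps an $\A(\z)$-span of $s$ elements into an $\A(\z)$-span of $s$ elements (coefficients become $p(T^r\z)\in\A(\z)$), each summand is finite dimensional. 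Hence $V_f$ is finite dimensional.

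\textbf{Item (4).} This is the main obstacle, and I would handle it with the chain rule. Write $T^k\z$ with coordinates $\z^{\bm{c}_j}$, where $\bm{c}_j=T^k(\bm{e}_j)$, using the column convention. Then
$$\partial_{z_i}\big(f(T^k\z)\big)=\sum_{j=1}^n (\partial_{z_j}f)(T^k\z)\cdot \frac{c_{j,i}\,\z^{\bm{c}_j}}{z_i}\,.$$
This alone does not close up, so I would instead use the Euler operators $\delta_i=z_i\partial_{z_i}$. For these, $\delta_i(g(T^k\z))=\sum_j (T^k)_{ij}(\delta_j g)(T^k\z)$, with constant integer coefficients, up to transpose conventions. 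Now apply $\delta_i$ to the relation expressing each $T^k f$ in terms of a basis $T^{0}f,\dots,T^{m-1}f$ of $V_f$ with rational coefficients. Since $\delta_i$ is a derivation preserving $\A(\z)$, this gives
$$\sum_j (T^k)_{ij}(\delta_j f)(T^k\z)\in \Span_{\A(\z)}\big(T^lf,\ \delta_{j'}(T^lf)\big)\,,$$
and the right side is in turn contained in $W=\Span_{\A(\z)}\big(T^lf,\ (\delta_j f)(T^l\z) : l<m,\ j\le n\big)$, which is finite dimensional. Because $T^k$ is invertible over $\Q$, we can solve this linear system for the vector $((\delta_j f)(T^k\z))_j$, which therefore lies in $W$ for every $k$. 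Hence $V_{\delta_i f}\subseteq W$ is finite dimensional, and $\delta_i f=z_i\partial_{z_i}f$ is $T$-Mahler. Finally, $\partial_{z_i}f=z_i^{-1}\delta_i f$, and multiplying by the rational function $z_i^{-1}$ preserves finite dimensionality of the iterate span, since $T^k(z_i^{-1})\in\A(\z)$. This gives (4).
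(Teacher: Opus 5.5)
Your proof is correct and follows the paper's route. Every item goes through the finite-dimensionality criterion of \Cref{prop:1}, and (4) is the same chain-rule/Leibniz argument closed off by an invertible transition matrix. Your variants for (2) (via $V_{Tf}\subseteq V_f$) and for the converse of (3) (a dimension count, where the paper simply notes that a $T^k$-Mahler equation is already a $T$-Mahler equation) are valid minor variations.

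Using the Euler operators $\delta_i=z_i\partial_{z_i}$ to get the constant matrix $T^k$ is a pleasant cosmetic improvement. However, your remark that the plain chain rule ``does not close up'' is mistaken. The paper works with $\partial_{z_i}$ directly, and the matrix there is $\Diag(z_i)^{-1}\,T^k\,\Diag\big((T^k\z)_j\big)$. That matrix is invertible over $\A(\z)$, since its determinant evaluated at $(1,\ldots,1)$ is $\det T^k\neq 0$.
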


\begin{proof}
The proofs of the first three results are very similar to the univariate case, but we still give them for completeness.
\begin{enumerate}
    \item This follows from \Cref{prop:1}, since $V_{f+g} \subseteq V_f+V_g$ and $V_{fg} \subseteq V_f\cdot V_g$ (which are both finite dimensional, with dimension $\le \dim(V_f) + \dim(V_g)$ for $V_{f+g}$ and dimension $\le \dim(V_f) \cdot \dim(V_g)$ for $V_{fg}$).
    \item First, assume that $f$ satisfies a $T$-Mahler equation. Then setting $\z \rightarrow T\z$ in that equation gives a $T$-Mahler equation for $f(T\z)$ (it is not possible that $p_i(T\z)=0$ for all $i$ as that would imply that $p_i=0$ for all $i$ by \Cref{lem:1}).\\
    Conversely, if $f(T\z)$ satisfies a $T$-Mahler equation, it is in particular a $T$-Mahler equation for $f$.
    \item First, assume that $f$ is $T$-Mahler. Since $\Span_{\A(\z)}((T^{k})^if, i \in \N) \subseteq V_f$, it follows that the space $\Span_{\A(\z)}((T^{k})^if, i \in \N)$ has finite dimension over $\A(\z)$ so $f$ is $T^k$-Mahler by \Cref{prop:1}.\\
    Conversely, if $f$ is $T^k$-Mahler, a $T^k$-Mahler equation for $f$ is clearly also a $T$-Mahler equation.
    \item We know that $V_f$ has finite dimension: let $f_1,...,f_m$ be a $\A(\z)$-basis of $V_f$.\\
Let
$$E=\Span_{\A(\z)}(f_1,...,f_m, (\partial_{z_i}f_1,...,\partial_{z_i}f_m, i \in \llbracket     1,n \rrbracket)) \, .$$
By the Leibniz rule (note: $\partial_{z_i}$ isn't $\A(\z)$-linear), we know that if $g \in V_f$, then for all $i \in \vaset$, $\partial_{z_i}g \in E$. Therefore for all $i \in \vaset$ and $k \in \N$, we get $\partial_{z_i}(f(T^k\z)) \in E$.\\
Let $t_{ijk}$ be the $(i,j)$-coefficient of $T^k$. Using the chain rule, we get
$$\partial_{z_i}(f(T^k\z))=\sum_{j=1}^n{t_{ijk}z_i^{t_{ijk}-1}\prod_{l \neq i}{z_l^{t_{ljk}}}(\partial_{z_j}f)
(T^k\z)} \, .$$
Define $M$ as the matrix with $i,j$-coefficient
$$M_{ij}=t_{ijk}z_i^{t_{ijk}-1}\prod_{l \neq i}{z_l^{t_{ljk}}} \, .$$
Then the last equality is just $X=MY$ where $$X=(\partial_{z_i}(f(T^k\z)), i \in \vaset)^*$$ has coefficients in $E$ and $$Y=((\partial_{z_i}f)
(T^k\z), i \in \vaset)^* \, .$$
Since $\det(M)(1,1,...,1)=\det(T^k) \neq 0$, $\det(M)$ is non zero as a polynomial, so $M$ is invertible in $\M_n(\A(\z))$, and thus we can write $Y=M^{-1}X$.\\
Therefore, for all $i \in \vaset$ and $k \in \N$, $(\partial_{z_i}f)
(T^k\z)$ is a $\A(\z)$-linear combination of coefficients of $X$, so it is in $E$. Since $E$ has finite dimension, it follows that $V_{\partial_{z_i}f}$ has finite dimension over $\A(\z)$ for all $i \in \vaset$, which is what we wanted.

\end{enumerate}
\end{proof}

\subsection{Recurrence relation for coefficients of an $M_T$-function} \label{section:2.4} $\newline$

This subsection shows that the coefficients of an $M_T$-function satisfy a recurrence relation.

\begin{nt}
If $(b(k_1,\ldots,k_n))_{(k_1,\ldots,k_n) \in \N^n}$ is a multi-dimensional sequence, and if $(r_1,\ldots,r_n) \in \R^n$, we use the convention $b(r_1,\ldots,r_n)=0$ if $(r_1,\ldots,r_n) \notin \N^n$.
\end{nt}

\begin{defi} For a transformation $T$ and a column vector $X \in \N^n$, we define the set $$E_T(X)=\{Y=(y_1,...,y_n)\in \N^n |T(Y)=X \}$$ where $T(Y)$ is the usual matrix-column vector multiplication.\\
Note that since $T$ is non-singular, $E_T(X)$ is either empty or has only one element.\\
For a sequence $a \in \C^{\N^n}$, define further $$a(T^{-1}(X))=\sum_{(y_1,\ldots,y_n) \in E_T(X)}{a(y_1,\ldots y_n)} \, .$$
This is either $0$ or a single term $a(y_1,...,y_n)$ with $T(y_1,...,y_n)=X$.
\end{defi}

Then the coefficients of an $M_T$-function satisfy a recurrence relation of the following form.

\begin{prop} \label{prop:9}
Let $f(z_1,...,z_n)=\sum_{k_1,\ldots,k_n \geq 0}{a(k_1,...k_n)z_1^{k_1}\ldots z_n^{k_n}}$ be a solution of a $T$-Mahler equation of order $m$. Then there exists $N \in \N$ and constants $c_{j_1, \ldots, j_n, l} \in \A$ such that $a(k_1, \ldots, k_n)$ satisfies the recurrence relation
\begin{align*}
&\sum_{(j_1,\ldots,j_n) \in \llbracket 0,N \rrbracket^n}
c_{j_1,\ldots,j_n,0}\cdot a(k_1-j_1,\ldots,k_n-j_n) \\
&\quad +
\sum_{l=1}^m \sum_{(j_1,\ldots,j_n) \in \llbracket 0,N \rrbracket^n}
c_{j_1,\ldots,j_n,l}\cdot a(T^{-l}(k_1-j_1,\ldots,k_n-j_n))
=0 \, .
\end{align*}
\end{prop}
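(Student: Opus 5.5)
The plan is to extract coefficients directly from the Mahler equation. Write the equation as $\sum_{l=0}^m p_l(\z) f(T^l\z)=0$ with $p_l\in\A[\z]$, and let $N$ be an integer at least the maximal partial degree in any single variable of all the $p_l$. Then each $p_l$ can be written as
$$p_l(\z)=\sum_{(j_1,\ldots,j_n)\in\llbracket 0,N\rrbracket^n} c_{j_1,\ldots,j_n,l}\,\z^{\bm{j}},$$
padding with zero coefficients where necessary. These coefficients $c_{\bm{j},l}\in\A$ are exactly the constants claimed in the statement.

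Next, compute the coefficient of a monomial $\z^{\bm{k}}$ in $f(T^l\z)$. Since $T^l\z^{\bm{y}}=\z^{T^l(\bm{y})}$, we have
$$f(T^l\z)=\sum_{\bm{y}\in\N^n}a(\bm{y})\,\z^{T^l(\bm{y})}.$$
The map $\bm{y}\mapsto T^l(\bm{y})$ is injective because $T$ is non-singular. Hence the coefficient of $\z^{\bm{x}}$ in $f(T^l\z)$ is $a(\bm{y})$ if $\bm{x}=T^l(\bm{y})$ for some $\bm{y}\in\N^n$, and $0$ otherwise. In the notation just introduced this is precisely $a(T^{-l}(\bm{x}))$, applying the definition with the matrix $T^l$ in place of $T$. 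When $l=0$ it is just $a(\bm{x})$.

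Finally, multiply out $p_l(\z)f(T^l\z)$. The coefficient of $\z^{\bm{k}}$ is
$$\sum_{\bm{j}\in\llbracket 0,N\rrbracket^n}c_{\bm{j},l}\,a(T^{-l}(\bm{k}-\bm{j})),$$
where terms with $\bm{k}-\bm{j}\notin\N^n$ vanish by the stated convention, consistently with $E_{T^l}(\bm{k}-\bm{j})$ being empty. Summing over $l$ and using that the total series is zero, the coefficient of every $\z^{\bm{k}}$ vanishes. This is the displayed recurrence, with the $l=0$ term separated out.

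I expect no real obstacle; the only point needing care is notational. One must check that $a(T^{-l}(\cdot))$ is meant with $T^l$ and that the convention for non-$\N^n$ arguments handles negative indices, including the case where $T^{-l}(\bm{x})$ is a rational, non-integral or non-nonnegative vector. The sum over $l$ is finite, so all coefficient manipulations are legitimate in $\A[[\z]]$.
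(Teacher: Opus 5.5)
Your proposal is correct and follows the paper's proof: you take $N$ as the maximal partial degree of the $p_l$, let $c_{\bm{j},l}$ be the coefficients of $p_l$, use $T^l\z^{\bm{y}}=\z^{T^l(\bm{y})}$ and the injectivity of $T^l$ to identify the coefficient of $\z^{\bm{k}}$ in $\z^{\bm{j}}f(T^l\z)$ as $a(T^{-l}(\bm{k}-\bm{j}))$, and then set the coefficient of each $\z^{\bm{k}}$ to zero. Your additional care about the zero convention when $\bm{k}-\bm{j}\notin\N^n$ or when $T^{-l}(\bm{k}-\bm{j})$ is not in $\N^n$ is consistent with the paper's definitions of $E_{T^l}$ and $a(T^{-l}(\cdot))$.
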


\begin{proof}
The function $f$ satisfies an equation of the form 
$$\sum_{i=0}^m{p_i(\z)f(T^i\z)}=0 \, .$$
Let $$N=\max_{i,k}\deg_{z_k}{p_i(\z)}$$
and then let $c_{\bm{j},l}$ (for $\bm{j} \in \llbracket     0,N \rrbracket^n$ and $l \in \llbracket     0,m \rrbracket$) be the coefficient of $\z^{\bm{j}}$ in $p_l(\z)$. It is algebraic since the $p_l(\z)$ have algebraic coefficients.\\
Remember that $T\z^{\bm{p}}=\z^{T(\bm{p})}$, so the coefficient in $\z^{\bm{k}}$ of $\z^{\bm{j}}f(T^l\z)$ is $a(T^{-l}(\bm{k}-\bm{j}))$.\\
Writing $f$ as a power series and computing the terms of the $T$-Mahler equation, then looking at the coefficient in $\z^{\bm{k}}$, gives that
$$\sum_{l=0}^m{\sum_{\bm{j} \in \llbracket     0,N \rrbracket^n}{c_{\bm{j}, l} \cdot a(T^{-l}(\bm{k}-\bm{j}))}}=0$$
which is what we wanted.
\end{proof}

\begin{ex} We show in practice what the recurrence relations above look like and how we can use them.\\
Consider the operator
$T=\begin{pmatrix}
5 & 1 \\
4 & 1 
\end{pmatrix}$
and the $T$-Mahler equation
$$(z_1^3z_2^3-2z_1^5z_2^2)f(z_1,z_2)=f(T(z_1,z_2))+(z_1z_2^2)f(T^2(z_1,z_2)) \, .$$
An $M_T$-function solution of the previous equation has the form
$$f(z_1,z_2)=\sum_{n \geq 0, m \geq 0}{a(n,m)z_1^nz_2^m}$$
where $a(n,m)$ satisfies the recurrence relation 
$$a(n-3,m-3)-2a(n-5,m-2)=a(T^{-1}(n,m))+a(T^{-2}(n-1,m-2)) \, .$$
\\
Now, if we make the change of variables $n \rightarrow n+3$, $m \rightarrow m+3$, this recurrence relation becomes
$$a(n,m)=2a(n-2,m+1)+a(T^{-1}(n+3,m+3))+a(T^{-2}(n+2,m+1)) \, .$$
We can see that $|(n-2,m+1)| <|(n,m)|$. Furthermore, $$|T(n,m)|=|(5n+m,4n+m)| \geq 2n+2m=2|(n,m)|$$
which gives
$$|T^{-1}(n,m)| \leq \frac12 |(n,m)| \, .$$
Hence, we see that
$$|T^{-1}(n+3,m+3)| \leq \frac12 |(n+3,m+3)|=\frac{|(n,m)|+6}{2} <|(n,m)|$$
for $n+m >6$. Similarly $|T^{-2}(n+2,m+1)|<|(n,m)|$ for $n+m >1$.

This shows that for $n+m >6$, this recurrence relation expresses $a(n,m)$ as a linear combination of terms $a(p,q)$ where $|(p,q)| < |(n,m)|$. This means that the sequence $a(n,m)$ is completely determined by its initial terms (for which $n+m \leq 6$) as well as the recurrence relation. This will be made more general in \Cref{section:3} to obtain the main properties of coefficients of $M_T$-functions.
\end{ex}

\subsection{Preliminary reduction of $T$} \label{section:red} $\newline$

In this section, we perform a reduction on $T$ so that we can get suitable bounds on the terms $(p_1,\ldots,p_n)$ such that $a(p_1,\ldots,p_n)$ appears on the right side of the recurrence relation.

\begin{defi} \label{defi:order} We write $\leq_{\rm{lex}}$ for the lexicographic order on $\N^n$. We then define a new order $\leq$ on $\N^n$ by $\bm{k}\leq\bm{k'}$ if and only if
$$|\bm{k}| < |\bm{k'}| \ \text{or} \ (|\bm{k}| =|\bm{k'}| \ \text{and} \ \bm{k} \leq_{\rm{lex}} \bm{k'}) \, .$$
This gives a total order on $\N^n$, which extends to a total order on monomials.
\end{defi}

We recall that \Cref{prop:stab} gives that the set of $M_T$-functions and the set of $M_{T^k}$-Mahler functions are the same. This motivates the following definition.

\begin{defi}
We say that $T$ reduces to $T'$ if there exists $k \in \N^*$ such that $T'=T^k$. Note that if $T$ belongs to the class $\mathcal{F}$, then $T'$ will also belong to the class $\mathcal{F}$.
\end{defi}

The main result that we will use in \Cref{section:3} is the following.

\begin{prop} \label{prop:red}
Let $T$ be in the class $\mathcal{F}$. Then $T$ reduces to a matrix $T'$ such that
\begin{enumerate}
    \item For all $i \in \vaset$, $[T']_{(i,i)} \geq 1$.
    \item For all $\bm{k} \in \N^n$, $|T'(\bm{k})| \geq 2 |\bm{k}|$, hence 
    $$|(T')^{-1}(\bm{k})| \leq \frac12 |\bm{k}|$$ for all $\bm{k}$ such that $(T')^{-1}(\bm{k}) \in \N^n$.
\end{enumerate}
    
\end{prop}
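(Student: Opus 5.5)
The plan is to prove the two properties separately, each for some power of $T$, and then take a common multiple of the exponents. The one point to check is that both properties survive passing to further powers. Throughout, the key facts are that $T$ has nonnegative integer entries and that, with the column convention, $T$ acts on monomials by $\len(T\z^{\bm{a}})=|T(\bm{a})|=\sum_i a_i c_i(T)$, where $c_i(T)$ is the sum of the $i$-th column of $T$, i.e. $c_i(T)=\len(Tz_i)$.

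\emph{Step 1 (diagonal).} Since $\det T\neq 0$, the Leibniz expansion of the determinant has a nonzero term. So there is a permutation $\sigma$ of $\vaset$ with $t_{\sigma(i)i}\geq 1$ for all $i$. Let $k_0$ be the order of $\sigma$. Expanding $(T^{k_0})_{ii}$ as a sum over paths of length $k_0$ with nonnegative terms, the path $i\to\sigma(i)\to\cdots\to\sigma^{k_0}(i)=i$ contributes a product of entries $\geq 1$. Hence $(T^{k_0})_{ii}\geq 1$. The same path argument, iterated, shows that $(T^{k_0m})_{ii}\geq 1$ for every $m\geq 1$, so property (1) is stable under further powers of $T^{k_0}$.

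\emph{Step 2 (column sums $\geq 2$).} Nonsingularity means that no column of $T$ vanishes, so $c_i(T)\geq 1$. Consequently $\len(T\z^{\bm{a}})\geq|\bm{a}|$, and the lengths $\len(T^jz_i)$ are nondecreasing in $j$. I claim that for each $i$ some $j$ gives $\len(T^jz_i)\geq 2$. Otherwise every $T^jz_i$ is a single variable, and by pigeonhole there are $j_1<j_2$ with $T^{j_1}z_i=T^{j_2}z_i=z_l$. Writing $d=j_2-j_1$, we get $T^{j_2}z_i=T^d(T^{j_1}z_i)=T^dz_l$ as substitutions, so $T^dz_l=z_l$. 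This contradicts $T\in\mathcal{F}$. (If one prefers to avoid any order-of-composition subtlety, the injectivity in \Cref{lem:1} lets one cancel the common factor $T^{j_1}$ in either order.) By monotonicity, a single $K$ works for all $i$, and then every multiple of $K$ also works.

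\emph{Step 3 (conclusion).} Take $T'=T^{k_0K}$. By Step 1 it has diagonal entries $\geq 1$. By Step 2 all its column sums are $\geq 2$, so $|T'(\bm{k})|=\sum_i k_ic_i(T')\geq 2|\bm{k}|$ for $\bm{k}\in\N^n$. Applying this to $\bm{y}=(T')^{-1}(\bm{k})\in\N^n$ gives $|\bm{k}|\geq 2|(T')^{-1}(\bm{k})|$. The main (mild) obstacle is Step 2: one must make sure that the "stays a single variable forever" case really produces a fixed variable $T^dz_l=z_l$, which is exactly what the class $\mathcal{F}$ forbids.
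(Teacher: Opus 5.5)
Your proof is correct. For condition (2) it is the paper's argument (\Cref{lem:expending}): column sums are at least $1$, so $\len(T^jz_i)$ is nondecreasing. A chain of single variables must then repeat, which forces $T^dz_l=z_l$. The paper also records the explicit exponent $n$, since two of $T^0z_i,\ldots,T^nz_i$ must coincide. You only claim some $K$, which is all the proposition needs.

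The genuine difference is in condition (1).
\begin{itemize}
\item The paper works one index at a time. For fixed $i$, it lets $S$ be the set of variables occurring in $Tz_i,T^2z_i,\ldots$. It observes that the columns of $T$ indexed by $S\cup\{i\}$ are supported on the rows in $S$. Since these $|S\cup\{i\}|$ columns are linearly independent, this forces $i\in S$, i.e.\ $(T^k)_{ii}\geq 1$ for some $k$. The exponents for the different $i$ are then combined using stability under powers.
\item You instead read off, from a nonzero term of the Leibniz expansion, a permutation $\sigma$ with $t_{\sigma(i)i}\geq 1$. This is a cycle cover of the support graph of $T$. Going once around the cycles shows that the single exponent $k_0=\mathrm{ord}(\sigma)$ works for all $i$ at once.
\end{itemize}
Both arguments use only nonsingularity and nonnegativity of the entries. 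Yours is shorter and avoids the rank computation. The paper's shows directly, variable by variable, that $z_i$ reappears in its own orbit under $T$.

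You also spell out the final combination $T'=T^{k_0K}$ and check that both properties survive it. The paper leaves this step implicit: apply the second lemma to the power from the first lemma, which is still in $\mathcal{F}$.
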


Note that these conditions are stable under taking powers again. We separate the proof of \Cref{prop:red} into two lemmas, one for each condition.

\begin{lem}
If $T$ is non-singular, then $T$ reduces to a matrix $T'$ such that for all $i \in \vaset$, $[T']_{(i,i)} \geq 1$.
\end{lem}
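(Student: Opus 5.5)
The plan is to use the combinatorics of nonnegative matrices: view $T$ as the adjacency matrix of a weighted directed graph $G$ on the vertex set $\vaset$, with an edge $i\to j$ whenever $t_{ij}\ge 1$. Since all entries are nonnegative, for every $k$ we have $[T^k]_{(i,i)}\ge 1$ if and only if $G$ contains a closed walk of length exactly $k$ through $i$. The key steps are as follows.

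\textbf{Step 1: every vertex lies on a cycle.} Because $T$ is non-singular, the determinant expansion
$$\det T=\sum_{\sigma}\operatorname{sgn}(\sigma)\prod_i t_{i\sigma(i)}$$
has at least one nonzero term. So there is a permutation $\sigma$ of $\vaset$ with $t_{i\sigma(i)}\ge 1$ for all $i$. Each $i$ therefore lies on the cycle of $\sigma$ containing it, which is a closed walk in $G$ of some length $\ell_i\ge 1$, namely the length of that cycle.

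\textbf{Step 2: pass to a common power.} Set $L=\operatorname{lcm}(\ell_1,\ldots,\ell_n)$; one could also simply take $L=n!$ or the order of $\sigma$. Going around the cycle of $i$ exactly $L/\ell_i$ times gives a closed walk of length $L$ at $i$. Equivalently, since $P_\sigma\le T$ entrywise (where $P_\sigma$ is the permutation matrix, up to the row/column convention) and the entries are nonnegative, we get $T^L\ge P_\sigma^L=I$ entrywise. Taking $T'=T^L$ then gives $[T']_{(i,i)}\ge 1$ for all $i$.

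The only point requiring care is the entrywise monotonicity $A\ge B\ge 0\Rightarrow A^k\ge B^k$, which is immediate from the nonnegativity of all entries. I do not expect a genuine obstacle: the main idea is extracting the permutation from $\det T\neq 0$. This step uses only non-singularity, consistent with the statement not requiring the class $\mathcal{F}$.
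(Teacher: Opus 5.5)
Your proof is correct, and it takes a different route from the paper.

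\textbf{The paper's route.} The paper argues one index at a time. For $i=1$, it lets $S$ be the set of variables occurring in $Tz_1, T^2z_1,\ldots$. It observes that the columns of $T$ indexed by $S\cup\{1\}$ have zero entries in all rows outside $S$. Since these columns are linearly independent (non-singularity), this forces $|S\cup\{1\}|\le |S|$, i.e.\ $1\in S$. The paper then uses that a positive diagonal entry of $T^k$ persists in every power of $T^k$, so a common multiple of the individual exponents works for all indices.

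\textbf{Your route.} You instead extract from the Leibniz expansion of $\det T\neq 0$ a single permutation $\sigma$ with $t_{i\sigma(i)}\ge 1$ for all $i$. Entrywise monotonicity of powers of nonnegative matrices then gives $T^{L}\ge P_\sigma^{L}=I$, where $L$ is the order of $\sigma$.

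\textbf{Comparison.} Your argument treats all indices simultaneously, with no separate combination step. It gives an exponent bounded independently of $T$ (for instance $L=n!$ always works). Its stronger conclusion $T^{L}\ge I$ entrywise also makes stability under further powers immediate. The paper's rank argument is phrased directly in terms of the monomial map and the chain $T^kz_1$, which matches the style of the neighbouring lemmas, but it only produces per-index exponents that must then be combined. Both arguments use nothing beyond non-singularity and nonnegativity of the entries.
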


\begin{proof}
Note that if $T^k$ has a non zero coefficient in the spot $(i,i)$, then all powers of $T^k$ will still have a non zero coefficient in the spot $(i,i)$. Therefore it is sufficient to prove that for all $i \in \vaset$, there exists $k$ such that $T^k$ has a non zero coefficient in the spot $(i,i)$. We prove that for $i=1$, the proof being identical for the other values of $i$.\\
Consider the set $S \subseteq \vaset$ of variables appearing in the chain
$$Tz_1 \rightarrow T^2z_1 \rightarrow\ldots$$
and let $d=|S \cup \{1\}|$. The goal is equivalent to showing that $1 \in S$.\\
Then consider the submatrix $X \in M_{n \times d}(\N)$ given by the $d$ columns of $T$ corresponding to elements of $S \cup \{1\}$. Since $T$ is non-singular, $X$ is of rank $d$. By definition, for $i \in S \cup\{1\}$, $Tz_i$ can only contain variables $z_j$ with $j \in S$. Therefore, all rows of $X$ corresponding to $j \notin S$ are zero, and so the rank of $X$ is $\leq |S|$. This gives $d=|S \cup \{1\}| \leq |S|$ and so $1 \in S$.
\end{proof}

\begin{lem} \label{lem:expending}
Let $T$ be in the class $\mathcal{F}$. Then
\begin{enumerate}
    \item For all $\bm{k} \in \N^n$, we have $|T^n(\bm{k})| \geq 2 |\bm{k}|$.
    \item The length of $T^k z_i$ tends to $+\infty$ for all $i \in \vaset$.
\end{enumerate}
\end{lem}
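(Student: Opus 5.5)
The plan is to reduce both statements to one fact about lengths of columns: a column of $T^n$ has length at least $2$. Throughout I use the formula $T\z^{\bm{k}}=\z^{T(\bm{k})}$, so that $T^kz_i=\z^{T^k(\bm{e}_i)}$ and $\len(T^kz_i)=|T^k(\bm{e}_i)|$, where $\bm{e}_i$ is the $i$-th standard basis vector.

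\emph{Linearity of the length on $\N^n$.} For $\bm{v}\in\N^n$, $T(\bm{v})=\sum_j v_j T(\bm{e}_j)$ with all entries nonnegative, so $|T(\bm{v})|=\sum_j v_j\,|T(\bm{e}_j)|$. Since $T$ is non-singular, no column of $T$ is zero. Hence $|T(\bm{e}_j)|\ge 1$ for all $j$, and therefore $|T(\bm{v})|\ge|\bm{v}|$ for every $\bm{v}\in\N^n$. Applying this to $\bm{v}=T^k(\bm{e}_i)$ shows that the sequence $k\mapsto\len(T^kz_i)=|T^k(\bm{e}_i)|$ is nondecreasing. Also, by the same linearity applied to $T^n$, we get $|T^n(\bm{k})|=\sum_i k_i\,|T^n(\bm{e}_i)|$. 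So part (1) will follow once we show $|T^n(\bm{e}_i)|\ge 2$ for every $i$.

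\emph{The key step (main obstacle): excluding columns of length $1$.} Suppose $|T^n(\bm{e}_i)|=1$ for some $i$. By monotonicity, $|T^k(\bm{e}_i)|=1$ for all $0\le k\le n$. So each $T^k(\bm{e}_i)$ is a standard basis vector, say $\bm{e}_{\sigma(k)}$, and $T(\bm{e}_{\sigma(k)})=\bm{e}_{\sigma(k+1)}$ for $k<n$. Among the $n+1$ indices $\sigma(0),\ldots,\sigma(n)\in\vaset$, two must coincide by pigeonhole: $\sigma(a)=\sigma(b)$ with $a<b$. Then $T^{b-a}(\bm{e}_{\sigma(a)})=\bm{e}_{\sigma(b)}=\bm{e}_{\sigma(a)}$, which means $T^{b-a}z_{\sigma(a)}=z_{\sigma(a)}$ with $b-a\in\N^*$. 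This contradicts $T\in\mathcal{F}$ (\Cref{defi:f}). Hence every column of $T^n$ has length at least $2$, and (1) follows.

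\emph{Part (2).} Iterate (1): $\len(T^{kn}z_i)=|T^{kn}(\bm{e}_i)|\ge 2^k$. Together with the monotonicity of $k\mapsto\len(T^kz_i)$ established above, this gives $\len(T^kz_i)\to+\infty$.

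The only point requiring care is the composition convention from \Cref{not:1}. Working purely with the linear action on exponent vectors $T^k(\bm{e}_i)$ sidesteps it, since $T^{k+1}(\bm{e}_i)=T(T^k(\bm{e}_i))$ unambiguously.
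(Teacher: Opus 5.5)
Your proof is correct and follows the paper's argument step for step. You reduce by linearity to showing that each column of $T^n$ has length at least $2$, and you get monotonicity of $\len(T^kz_i)$ from the columns of $T$ being nonzero. You then rule out a chain of length-one monomials by pigeonhole against the definition of $\mathcal{F}$, and iterate to obtain part (2); working with the exponent vectors $T^k(\bm{e}_i)$ instead of the monomials $T^kz_i$ is only a cosmetic difference.
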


\begin{proof}
By linearity, we only need to prove that $|T^n(e_i)| \geq 2$ for $$e_i=(0,0,\ldots,1,0\ldots,0)^*$$ the $i$-th vector of the canonical basis. Equivalently, we need to prove that the length of $T^nz_i$ is at least $2$ for all $i \in \vaset$.\\
Since $T$ is non-singular, its columns cannot be zero, so $\len(Tz_i) \geq 1$ for all $i \in \vaset$. Therefore, $\len(T^k z_i)$ is non-decreasing (as a function of $k$). Assume that there is some $i \in \vaset$ such that $\len(T^nz_i)=1$. Then for all $k \in \llbracket 0, n \rrbracket$, $T^kz_i=z_{a_k}$ for some $a_k \in \vaset$. Since the $a_k$ cannot be all distinct, there exists $k_1<k_2$ such that $T^{k_1}z_i=T^{k_2}z_i$. But then $T^{k_2-k_1}z_{a_{k_1}}=z_{a_{k_1}}$, which contradicts the fact that $T$ is in the class $\mathcal{F}$. We conclude that $\len(T^n z_i) \geq 2$ for all $i \in \vaset$, which proves the first part of the lemma.\\

For the second part, we iterate the first result to get that $\len(T^{kn}z_i) \geq 2^k$ for all $k \in \N$, and therefore $\len(T^k z_i)$ is non-decreasing and unbounded, so it tends to $+\infty$.
\end{proof}

\section{Main functional properties of $M_T$-functions} \label{section:3}

In this section, we prove the main functional properties of $M_T$-functions, namely that the coefficients of an $M_T$-function are $S$-integers in a common number field $\K$, and that an $M_T$-function is analytic in a neighborhood of $\bm{0}$ as well as meromorphic on the open polydisk of radius $1$ in $\C^n$. Furthermore, we prove that these analytic properties still hold in the $p$-adic setting.

\subsection{Arithmetic properties of coefficients of $M_T$-functions} \label{sect:3.1} $\newline$

For a number field $\K$ of degree $d$, we let $\mathcal{M}(\K)$ be the set of places of $\K$ with $\mathcal{M}_{\infty}(\K)$ denoting the (finite) set of infinite places of $\K$. For a place $\nu \in \mathcal{M}(\K)$, we write $|\cdot|_{\nu}$ for the corresponding absolute values.\\
The infinite places correspond to the archimedean absolute values given by $$|\cdot|_{\infty,\sigma}=|\sigma(\cdot)|^{i}$$
for each embedding $\sigma$ of $\K$ into $\C$. We normalize these infinite places by taking $i=\frac{1}{d}$ if $\sigma$ is a real embedding and $i=\frac{2}{d}$ if $\sigma$ is a complex embedding.\\
For each prime $p$, we fix a complete algebraically closed field
$(\C_p,|\cdot|_p)$ containing the field of $p$-adic numbers $\Q_p$ in the sense that the absolute value $|\cdot|_p$ on $\C_p$ restricts to an absolute value on $\Q_p$ which is equivalent to the usual $p$-adic absolute value.\\
Then the finite places correspond to the non-archimedean absolute values $$|\cdot|_{p,\sigma}=|\sigma(\cdot)|^i_p$$
for each embedding $\sigma$ of $\K$ into $\C_p$. If $\nu$ is an absolute value as above, we let $d_v$ be the quantity $[\K_v,\Q_p]$ where $\K_v$ is the completion of $\K$ using the place $\nu$. We normalize these non-archimedean absolute values by taking $i=\frac{d_v}{d}$.
These normalizations are used to obtain the product formula
$$\forall x \in \K^*, \prod_{\nu \in \mathcal{M}(\K)}{|x|_{\nu}}=1$$
and the expression of the logarithmic Weil height
$$\forall x \in \K^*, h(x)=\sum_{\nu \in \mathcal{M}(\K)}{\max(0,\log|x|_{\nu})}$$
with the convention $h(0)=0$.
We refer to Chapter 3 of \cite{waldschmidt2000diophantine} for an overview of these notions.\\
Recall the definition of an $S$-integer.
\begin{defi} 
Let $S$ be a finite set of places of $\K$
containing the archimedean ones, and let $x \in \K$. We say that $x$ is an $S$-integer and we write $x \in \mathcal{O}_{\K,S}$ if for all $\nu \notin S$, we have $|x|_{\nu} \leq 1$.\\
This is a ring because the finite places correspond to non-archimedean absolute values satisfying the ultrametric inequality.
\end{defi}

The main result of this subsection is that the coefficients of an $M_T$-function are all $S$-integers in a number field $\K$. This uses the results in \Cref{section:2.4} as well as \Cref{section:red}.

\begin{prop} \label{prop:field}
Let  $f$ be an $M_T$-function with $T$ in the class $\mathcal{F}$.\\
Then there exists a number field $\K$ containing all the coefficients of $f$.\\
Furthermore, there exists $S$ a finite set of places of $\K$ (containing the archimedean ones) such that all coefficients of $f$ are $S$-integers.
\end{prop}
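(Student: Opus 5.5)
The plan is to reduce to a well-behaved power of $T$, extract from the recurrence of \Cref{prop:9} a relation expressing each coefficient $a(\bm{k})$ with $|\bm{k}|$ large in terms of coefficients that are strictly smaller for the order $\leq$ of \Cref{defi:order}, and then run an induction along this well-order.

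\textbf{Reduction.} By \Cref{prop:red}, $T$ reduces to some $T'=T^k$ with $|(T')^{-1}(\bm{k})|\le \frac12|\bm{k}|$ whenever $(T')^{-1}(\bm{k})\in\N^n$. By \Cref{prop:stab}(3), $f$ is also an $M_{T'}$-function, so we may replace $T$ by $T'$. Then, by \Cref{thm:1}, $f$ satisfies a $T$-Mahler equation $\sum_{l=0}^m p_l(\z)f(T^l\z)=0$ with $p_0\neq0$.

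\textbf{Solving the recurrence.} Let $c\,\z^{\bm{j}_0}$ be the smallest monomial of $p_0$ for the order $\leq$, so $c\neq0$. Reading off the coefficient of $\z^{\bm{k}+\bm{j}_0}$ as in \Cref{prop:9} gives
$$c\,a(\bm{k})=-\sum_{\bm{j}\neq\bm{j}_0}c_{\bm{j},0}\,a(\bm{k}+\bm{j}_0-\bm{j})-\sum_{l=1}^m\sum_{\bm{j}}c_{\bm{j},l}\,a\bigl(T^{-l}(\bm{k}+\bm{j}_0-\bm{j})\bigr).$$

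The first sum involves only smaller indices. Indeed, the order $\leq$ is compatible with translation: if $\bm{j}>\bm{j}_0$, then either $|\bm{j}|>|\bm{j}_0|$, so $|\bm{k}+\bm{j}_0-\bm{j}|<|\bm{k}|$, or the lengths are equal and $\bm{j}>_{\rm lex}\bm{j}_0$, so $\bm{k}+\bm{j}_0-\bm{j}<_{\rm lex}\bm{k}$. In both cases $\bm{k}+\bm{j}_0-\bm{j}<\bm{k}$.

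The second sum also involves only smaller indices once $\bm{k}$ is large. Any nonzero term there satisfies
$$|T^{-l}(\bm{k}+\bm{j}_0-\bm{j})|\le \tfrac12\bigl(|\bm{k}|+|\bm{j}_0|\bigr),$$
which is $<|\bm{k}|$ as soon as $|\bm{k}|>B:=|\bm{j}_0|$. Hence for $|\bm{k}|>B$, the coefficient $a(\bm{k})$ equals $c^{-1}$ times a fixed linear combination, with coefficients among the $c_{\bm{j},l}$, of coefficients $a(\bm{p})$ with $\bm{p}<\bm{k}$. Making this ordering argument clean, in particular handling the convention that out-of-range indices contribute $0$, is the only delicate point. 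Everything else is bookkeeping.

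\textbf{Conclusion.} Let $\K$ be the number field generated by all the $c_{\bm{j},l}$ and the finitely many initial coefficients $a(\bm{k})$ with $|\bm{k}|\le B$. Since $\leq$ is a well-order on $\N^n$, induction along it shows that every $a(\bm{k})$ lies in $\K$.

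Let $S$ be the archimedean places together with the finitely many finite places $\nu$ at which some $c_{\bm{j},l}$ or some initial $a(\bm{k})$ has $|\cdot|_\nu>1$, or at which $|c|_\nu<1$. For $\nu\notin S$ we then have $|c^{-1}|_\nu\le1$, and the ultrametric inequality, applied along the same induction, gives $|a(\bm{k})|_\nu\le1$ for all $\bm{k}$. Thus all coefficients of $f$ are $S$-integers.
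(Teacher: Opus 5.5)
Your proposal is correct and follows essentially the same route as the paper. You reduce to a power of $T$ with $|T^{-1}(\bm{k})|\le\frac12|\bm{k}|$, isolate $a(\bm{k})$ using the minimal monomial of $p_0$ for the order $\leq$, check that every index on the right-hand side is smaller once $|\bm{k}|>|\bm{j}_0|$, and induct; your choice of $S$ (adding the places where $|c|_\nu<1$) is equivalent to the paper's choice of making the ratios $c_{\bm{j},l}/c_{\bm{\nu},0}$ into $S$-integers.
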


\begin{proof} 
By \Cref{prop:red}, we can reduce $T$ to a matrix $T'$ satisfying
\begin{enumerate}
    \item For all $i \in \vaset$, $[T']_{(i,i)} \geq 1$.
    \item For all $\bm{k} \in \N^n$, $|T'(\bm{k})| \geq 2 |\bm{k}|$, hence 
    $$|(T')^{-1}(\bm{k})| \leq \frac12 |\bm{k}|$$ for all $\bm{k}$ such that $(T')^{-1}(\bm{k}) \in \N^n$.
\end{enumerate}
Therefore, we will directly assume that $T$ satisfies these conditions.\\
We know that $f$ satisfies a $T$-Mahler equation
$$\sum_{l=0}^m{p_l(\z)f(T^l\z)}=0$$
with $p_0 \neq 0$ by using \Cref{thm:1}.\\
Now by \Cref{prop:9}, we get that $a(k_1,...,k_n)$ satisfies a recurrence relation of the form
\begin{align*}
\sum_{\bm{j} \in \llbracket 0,N \rrbracket^n}
c_{\bm{j},0} \cdot a(\bm{k}-\bm{j})
+
\sum_{l=1}^m \sum_{\bm{j} \in \llbracket 0,N \rrbracket^n}
c_{\bm{j},l} \cdot a(T^{-l}(\bm{k}-\bm{j}))
=0
\end{align*}
with the $c_{\bm{j}, 0}$ (the coefficients of $p_0$) which are not all zero.\\
Let $\bm{\nu}=(\nu_1,...,\nu_n)$ be the minimal $\bm{j}=(j_1,...,j_n)$ for the order defined in \Cref{defi:order} such that $c_{\bm{j}, 0} \neq 0$. Then write the recurrence relation for $\bm{k} \rightarrow \bm{k}+\bm{\nu}$. After isolating $a(\bm{k})$, this gives
\begin{align*}
   a(\bm{k})&=\sum_{\substack{\bm{j} \in \llbracket     0,N \rrbracket^n \\ \bm{j} \neq \bm{\nu}}}{\frac{-c_{\bm{j}, 0}}{c_{\bm{\nu}, 0}} \cdot a(\bm{k}+\bm{\nu}-\bm{j})}
    -\sum_{l=1}^m{\sum_{\bm{j} \in \llbracket     0,N \rrbracket^n}{\frac{c_{\bm{j}, l}}{c_{\bm{\nu}, 0}} \cdot a(T^{-l}(\bm{k}+\bm{\nu}-\bm{j}))}} \, .&
\end{align*}
Now in this equation, all non-zero terms with $a(\bm{k}+\bm{\nu}-\bm{j})$ appearing on the right-hand side have $\bm{k}+\bm{\nu}-\bm{j} < \bm{k}$ (for the order of \Cref{defi:order}) by construction of $\bm{\nu}$.\\
For all $1 \leq l \leq m$, and for all $\bm{j} \in \llbracket     0,N \rrbracket^n$, we observe that if $$\bm{p} \in E_{T^l}(\bm{k}+\bm{\nu}-\bm{j})$$ then we have $T^l(\bm{p})=\bm{k}+\bm{\nu}-\bm{j}$ and so
$$|\bm{p}|\leq \frac{1}{2^l} |\bm{k}+\bm{\nu}-\bm{j}| \leq \frac{|\bm{k}|+|\bm{\nu}|}{2}$$
using the second condition of \Cref{prop:red}.\\
This implies that if $|\bm{k}| > |\bm{\nu}|$, then for all $\bm{p} \in E_{T^l}(\bm{k}+\bm{\nu}-\bm{j})$, we have $|\bm{p}| < |\bm{k}|$.\\

Therefore, for $|\bm{k}| > |\bm{\nu}|$, the recurrence relation expresses $a(\bm{k})$ as a $\A$-linear combination of terms $a(\bm{p})$ where $\bm{p}< \bm{k}$. \\

Let $\K$ be a number field containing all the coefficients $c_{j_1, \ldots, j_n, l}$ and the (finite number of) coefficients $a(\bm{k})$ where $|\bm{k}| \leq |\bm{\nu}|$. We claim that all coefficients $a(\bm{k})$ of $f$ are in $\K$.\\
We prove this claim by induction on $\bm{k} \in \N^n$ with the order $\leq$. The initialization is satisfied for $\bm{k}=(0,...,0)$.\\
For the induction step, if $|\bm{k}| \leq |\bm{\nu}|$, then $a(\bm{k})$ is in $\K$ by definition. If $|\bm{k}| > |\bm{\nu}|$, then $a(\bm{k})$ is a $\K$-linear (because $\K$ contains the $c_{\bm{j}, l}$ and $\frac{1}{c_{\bm{\nu},0}}$) combination of terms $a(\bm{p})$ with $\bm{p}< \bm{k}$. These $a(\bm{p})$ are in $\K$ by induction hypothesis. We conclude that in that case, $a(\bm{k})$ is in $\K$ as well.\\
Therefore, all coefficients of $f$ are in $\K$.\\

For the second part about $S$-integers, we know that $\K$ contains the coefficients of the recurrence relation $c_{\bm{j}, l}$ and the (finite number of) coefficients $a(\bm{k})$ where $|\bm{k}| \leq |\bm{\nu}|$.\\
For a fixed $x \in \K$, there is a finite set of places of $\K$ that we will denote by $S_x$, such that $x$ is an $S_x$-integer (we can assume that $S_x$ contains the archimedean places).
Taking $S$ to be the union of the $S_x$ with $x$ ranging over all $\frac{c_{\bm{j}, l}}{c_{\bm{\nu}, 0}}$, as well as the initial coefficients $a(\bm{k})$ for $|\bm{k}| \leq |\bm{\nu}|$,  we see that all coefficients of the recurrence relation as well as the initial conditions are $S$-integers. As above, an easy induction shows that all coefficients of $f$ are $S$-integers (only using that the set of $S$-integers is a ring).\\
This ends the proof of the proposition.
\end{proof}

\subsection{Analyticity of $M_T$-functions at 0} \label{sect:3.2} $\newline$

An $M_T$-function is a priori only a formal power series. However, we can show that the recurrence relation satisfied by the coefficients automatically implies that it is a convergent power series (in a small neighborhood of $\bm{0}$). For this we use the same strategy as in the last subsection.

\begin{prop} [Analyticity at $\bm{0}$] \label{thm:5} $\newline$
Let $f(z_1,...,z_n)=\sum_{k_1,\ldots,k_n \geq 0}{a(k_1,...k_n)z_1^{k_1}\ldots z_n^{k_n}} \in \A[[z_1,...,z_n]]$ be an $M_T$-function with $T$ in the class $\mathcal{F}$.\\
Let $\K$ be a number field containing all of the coefficients of $f$ (and the coefficients of the recurrence relation for the coefficients of $f$). Then there is a $C>0$ such that for all $\nu \in \mathcal{M}(\K)$, we have
$$\forall (k_1,\ldots,k_n) \in \N^n, |a(k_1,\ldots,k_n)|_{\nu} \le C^{k_1+\ldots+k_n+1} \, .$$
In particular, $f$ is analytic in a neighborhood of $\bm{0}$ in $\C^n$.
\end{prop}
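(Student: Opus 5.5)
We follow the strategy of \Cref{prop:field}. First, by \Cref{prop:red} and \Cref{prop:stab}(3), we replace $T$ by a power $T'$. Then $|T^{-l}(\bm{k})|\le \frac12|\bm{k}|$ whenever $T^{-l}(\bm{k})\in\N^n$, and $f$ is still an $M_T$-function satisfying an equation with $p_0\neq 0$ (\Cref{thm:1}). This changes the recurrence but not the coefficients $a(\bm{k})$, so we may enlarge $\K$ to contain the new recurrence coefficients; a bound valid for all places of the enlarged field restricts to one for $\K$. As in the proof of \Cref{prop:field}, we isolate $a(\bm{k})$ using the minimal index $\bm{\nu}$ for the order of \Cref{defi:order}. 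For $|\bm{k}|>|\bm{\nu}|$ this gives
$$a(\bm{k})=\sum_{\bm{j}\neq\bm{\nu}} \gamma_{\bm{j},0}\, a(\bm{k}+\bm{\nu}-\bm{j})+\sum_{l=1}^m\sum_{\bm{j}}\gamma_{\bm{j},l}\, a(T^{-l}(\bm{k}+\bm{\nu}-\bm{j})),$$
with $\gamma_{\bm{j},l}=\mp c_{\bm{j},l}/c_{\bm{\nu},0}\in\K$. The number $R$ of terms on the right is at most $(m+1)(N+1)^n$.

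Next we fix a place $\nu$ and set $B_\nu=\max(1,\max|\gamma_{\bm{j},l}|_\nu)$. We prove $|a(\bm{k})|_\nu\le C_\nu^{|\bm{k}|+1}$ by induction along the order $\le$. The delicate point is the first sum. There the index $\bm{p}=\bm{k}+\bm{\nu}-\bm{j}$ is smaller than $\bm{k}$ for the order, but its length can equal $|\bm{k}|$, or even exceed it by up to $|\bm{\nu}|$ in principle. Since $\bm{\nu}$ is minimal, $\bm{j}>\bm{\nu}$, so $|\bm{j}|\ge|\bm{\nu}|$ and hence $|\bm{p}|\le|\bm{k}|$. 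The second sum has $|\bm{p}|\le(|\bm{k}|+|\bm{\nu}|)/2<|\bm{k}|$. So every index on the right has length at most $|\bm{k}|$. Iterating the equal-length terms along the lex order would multiply constants, so a naive induction on $\le$ blows up.

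We therefore plan to take the induction hypothesis to be the sharper statement $|a(\bm{p})|_\nu\le C_\nu^{|\bm{p}|}D_\nu$ for all $\bm{p}<\bm{k}$, and handle each total degree $s$ as a block. For fixed $s$, the equal-length terms form a triangular system on the finitely many $\bm{k}$ with $|\bm{k}|=s$, namely $\binom{s+n-1}{n-1}$ of them. Unrolling that system produces a factor of size roughly $(RB_\nu)^{\#\{\text{length } s\}}$, which is not geometric in $s$.

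To avoid this, we use the following device. For the infinitely many $\nu$ outside a finite set $S$, where all $\gamma$ and initial coefficients are integral, the ultrametric inequality gives $|a(\bm{k})|_\nu\le 1$ directly, as in \Cref{prop:field}. For the finitely many remaining places, it suffices to get some geometric bound. Here we exploit the first sum more cleverly. Writing $p_0=c_{\bm{\nu},0}\z^{\bm{\nu}}(1-h(\z))$, the equation is
$$f=h f+\sum_{l\ge1} q_l\, f(T^l\z),$$
up to the monomial factor. The terms $\bm{j}\neq\bm{\nu}$ with $|\bm{j}|=|\bm{\nu}|$ in $h$ are, after dividing by $\z^{\bm{\nu}}$, Laurent monomials of degree $0$. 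To remove them we perform a generic linear change of variables $z_i\mapsto$ combinations, or, more simply, a substitution $z_i=t^{w_i}y_i$ with a weight vector $w$ making $\bm{\nu}$ the unique minimum of the weighted degree among the monomials of $p_0$. Then every term of $h$ has strictly positive weight. Since $T$ is expanding, the weighted lengths of the $T^{-l}$-indices still shrink by a factor $<1$ for a suitable power of $T$. An induction on weighted length $|\bm{k}|_w$ then gives
$$|a(\bm{k})|_\nu\le \max(\text{initial})\,(RB_\nu)^{c|\bm{k}|_w+1},$$
a geometric bound, because each step strictly lowers weighted length by at least a fixed $\delta>0$. We expect choosing this weight compatibly with the contraction of $T^{-l}$ to be the main obstacle. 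If needed we would pass to a further power of $T$ to absorb the weight comparison constants, since $|\cdot|_w$ and $|\cdot|$ are equivalent norms on $\R_{\ge0}^n$.

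Finally we take $C$ as the maximum of the finitely many $C_\nu$ for $\nu\in S$, together with $1$ for the remaining places. The archimedean bound $|a(\bm{k})|\le C^{|\bm{k}|+1}$ (after undoing the normalization exponent) gives convergence on the polydisk of radius $1/C$.
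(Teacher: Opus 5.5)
Your overall strategy matches the paper's. You look for a linear weight that drops by a fixed amount at every step of the isolated recurrence, and you dispose of all places outside a finite set $S$ by $S$-integrality. There is a gap, though, at exactly the step you flag as the main obstacle and leave open. You never produce a weight $w$ that both makes $\bm{\nu}$ the unique minimizer of $|\cdot|_w$ on the support of $p_0$ and is contracted on the $T^{-l}$-indices.

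Your fallback does not close this gap. Passing to a further power $T^s$ replaces the Mahler equation by a new one. That changes $p_0$, $\bm{\nu}$ and $N$, hence the cone of admissible weights and the ratio $w_{\max}/w_{\min}$ you wanted to absorb. The power you need depends on $w$, and $w$ depends on the power; nothing in your argument controls this loop. A generic linear change of variables is not an option either, since it destroys the monomial action of $T$.

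The missing observation is that $\bm{\nu}$ minimizes total degree. So you can take $w$ to be a small perturbation of $(1,\ldots,1)$ by a lex-compatible weight. Concretely, with $M=N+1$, put $w_j=M^n+M^{n-j}$, so that $|\bm{k}|_w=\omega_M(\bm{k})$ is exactly the paper's weight.

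The support of $p_0$ lies in $\llbracket 0,N\rrbracket^n$. Hence \Cref{lem:weights} gives $\omega_M(\bm{j})\ge\omega_M(\bm{\nu})+1$ for every other $\bm{j}$ in that support. By linearity, every term of the first sum then has weight at most $\omega_M(\bm{k})-1$.

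Moreover, $w_{\max}/w_{\min}=(M^n+M^{n-1})/(M^n+1)<2$, so the reduction $|T^{-l}(\bm{x})|\le\frac12|\bm{x}|$ already suffices. For $\bm{p}=T^{-l}(\bm{k}+\bm{\nu}-\bm{j})$ one gets
\[
\omega_M(\bm{k})-\omega_M(\bm{p})\ \ge\ \tfrac{M^n-M^{n-1}+2}{2}\,|\bm{k}|-\tfrac{M^n+M^{n-1}}{2}\,|\bm{\nu}|\ \ge\ 1
\]
once $|\bm{k}|$ is large, with no further power of $T$. With this choice, your induction on the integer-valued weight goes through.

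For comparison, the paper treats the $T^{-l}$-terms differently. It uses condition (1) of \Cref{prop:red} (diagonal entries $\ge 1$) to get $p_i\le k_i+N$ coordinatewise. It then applies \Cref{lem:weights} directly to $\bm{p}<\bm{k}$, which works from the threshold $|\bm{k}|>|\bm{\nu}|$. Your contraction argument, once the weight is fixed as above, does without the diagonal condition. The price is a larger, but still finite, set of initial terms.
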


The strategy to prove \Cref{thm:5} will be very similar to what we did in the last section to prove \Cref{prop:field}. We start by proving two lemmas.

\begin{lem} \label{lem:6} 
Let $N \in  \N^*$. Assume that $(p_1,...,p_n)<_{\rm{lex}}(k_1,...,k_n)$ and $p_j \leq k_j+N$ for all $j$. Then for $M=N+1$,
$$\sum_{j=1}^n{p_jM^{n-j}} \leq \sum_{j=1}^n{k_jM^{n-j}}-1 \, .$$
\end{lem}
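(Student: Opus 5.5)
The plan is to compare the two base-$M$ expansions digit by digit, using the first index at which the vectors differ. Let $i$ be the smallest index with $p_i \neq k_i$. Since $(p_1,\ldots,p_n)<_{\rm{lex}}(k_1,\ldots,k_n)$, we have $p_j=k_j$ for $j<i$ and $p_i\le k_i-1$. Subtracting the two sums, the terms with $j<i$ cancel, so it suffices to show
$$\sum_{j=i}^n (k_j-p_j)M^{n-j}\geq 1 \, .$$

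We split this sum into the leading term and the tail. The leading term satisfies $(k_i-p_i)M^{n-i}\ge M^{n-i}$. For each $j>i$ the hypothesis $p_j\le k_j+N$ gives $k_j-p_j\ge -N$. Hence the tail satisfies
$$\sum_{j=i+1}^n (k_j-p_j)M^{n-j}\ \ge\ -N\sum_{j=i+1}^{n}M^{n-j}=-N\,\frac{M^{n-i}-1}{M-1} \, .$$

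Since $M-1=N$, the tail bound equals $-(M^{n-i}-1)$. Adding the leading term, the whole difference is at least $M^{n-i}-(M^{n-i}-1)=1$, which is exactly the claim. When $i=n$ the tail is empty and the bound is immediate from $p_n\le k_n-1$.

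There is no real obstacle here; the only point needing care is choosing $M=N+1$ so that the geometric sum $N\sum_{t=0}^{n-i-1}M^{t}$ telescopes to exactly $M^{n-i}-1$.
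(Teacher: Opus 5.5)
Your proposal is correct and follows essentially the same argument as the paper: take the first index where the tuples differ, use $p_{j_0}\le k_{j_0}-1$ there and $p_j\le k_j+N$ afterwards, and observe that $N\sum_{t=0}^{n-j_0-1}M^t=M^{n-j_0}-1$ because $M=N+1$. The only difference is cosmetic: you bound the difference of the two sums from below, whereas the paper bounds $\sum_j p_jM^{n-j}$ from above directly.
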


\begin{proof}[Proof of \Cref{lem:6}] $\newline$
Let $j_0$ be the smallest $j$ such that $p_j \neq k_j$. Then by definition, $p_{j_0} < k_{j_0}$ and $p_j=k_j$ for all $j <j_0$.\\
Then $p_{j_0} \leq k_{j_0}-1$ so
\begin{align*}
 \sum_{j=1}^n{p_jM^{n-j}} &\leq \sum_{j=1}^{j_0-1}{k_jM^{n-j}}+k_{j_0}M^{n-j_0}-M^{n-j_0}+\sum_{j=j_0+1}^n{(k_j+N)M^{n-j}}&\\
 &=\sum_{j=1}^n{k_jM^{n-j}}+N\sum_{j=j_0+1}^n{M^{n-j}}-M^{n-j_0} \, .&
\end{align*}
Since $$N\sum_{j=j_0+1}^n{M^{n-j}}=N\sum_{j=0}^{n-j_0-1}{M^j}=N\frac{M^{n-j_0}-1}{M-1} = M^{n-j_0}-1$$
we get what we wanted.
\end{proof}

\begin{lem} \label{lem:weights} Let $N \in \N^*$. Assume that $(p_1,...,p_n)<(k_1,...,k_n)$ for the order of \Cref{defi:order}, and $p_j \leq k_j+N$ for all $j$. Then for $M=N+1$,
$$M^n |\bm{p}|+\sum_{j=1}^n{p_jM^{n-j}} \leq M^n |\bm{k}|+\sum_{j=1}^n{k_jM^{n-j}}-1 \, .$$
\end{lem}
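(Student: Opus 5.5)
The plan is to split according to the two clauses in the definition of the order of \Cref{defi:order}: either $|\bm{p}|<|\bm{k}|$, or $|\bm{p}|=|\bm{k}|$ and $\bm{p}<_{\rm{lex}}\bm{k}$.

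The second case follows at once from \Cref{lem:6}. Since $|\bm{p}|=|\bm{k}|$, the terms $M^n|\bm{p}|$ and $M^n|\bm{k}|$ coincide. The hypotheses of \Cref{lem:6} hold, so
$$\sum_{j=1}^n p_jM^{n-j}\le \sum_{j=1}^n k_jM^{n-j}-1,$$
and adding $M^n|\bm{k}|$ to both sides gives the claim.

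The first case is the only one that needs any estimate. Here $|\bm{p}|\le|\bm{k}|-1$, hence $M^n|\bm{p}|\le M^n|\bm{k}|-M^n$. It then suffices to show
$$\sum_{j=1}^n p_jM^{n-j}\le \sum_{j=1}^n k_jM^{n-j}+M^n-1.$$
The hypothesis $p_j\le k_j+N$ gives
$$\sum_j p_jM^{n-j}\le \sum_j k_jM^{n-j}+N\sum_{j=0}^{n-1}M^j.$$
The geometric sum computation already used in the proof of \Cref{lem:6} shows $N\sum_{j=0}^{n-1}M^j=M^n-1$, since $N=M-1$. Combining the two inequalities yields the statement.

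The main point requiring care is that in the first case we cannot control the lexicographic part through the ordering. Instead we use only the coordinatewise bound $p_j\le k_j+N$, and the weight $M^n$ on the length is chosen exactly so that it absorbs the worst-case excess $M^n-1$. There is no real obstacle beyond checking this bookkeeping. The reason for doing it is that the weight $w(\bm{k})=M^n|\bm{k}|+\sum_j k_jM^{n-j}$ is then a linear function that strictly decreases along the recurrence. This will later allow an induction on $w(\bm{k})$ to produce bounds of the form $C^{|\bm{k}|+1}$, using $w(\bm{k})\le (M^n+M^{n-1})|\bm{k}|$.
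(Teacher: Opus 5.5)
Your proof is correct and follows the paper's argument: the same case split on the definition of the order, with the previous lemma (\Cref{lem:6}) handling the equal-length case. In the strict-length case you use the same bound, where the drop of $M^n$ in the length term absorbs the geometric-sum excess $N\sum_{j=0}^{n-1}M^j=M^n-1$.
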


\begin{proof}[Proof of \Cref{lem:weights}] $\newline$
By definition of the order, either $|\bm{p}|<|\bm{k}|$ or ($|\bm{p}|=|\bm{k}|$ and $\bm{p} <_{\rm{lex}} \bm{k}$). In the second case, the result follows from the previous lemma.\\
In the first case, we know that
\begin{align*}
M^n |\bm{p}|+\sum_{j=1}^n{p_jM^{n-j}}
&\leq M^n (|\bm{k}|-1)+\sum_{j=1}^n{(k_j+N)M^{n-j}}&\\
&\leq M^n |\bm{k}|+\sum_{j=1}^n{k_jM^{n-j}}-M^n+N\sum_{j=1}^n{M^{n-j}}&\\
&=M^n |\bm{k}|+\sum_{j=1}^n{k_jM^{n-j}}-1
\end{align*}
which finishes the proof.
\end{proof}

We note that this lemma shows that we can mimic our order on $\N^n$ by using the weights 
$$\omega_M(\bm{k})=M^n |\bm{k}|+\sum_{j=1}^n{k_jM^{n-j}}$$
as long as we restrict to tuples for which the coefficients do not grow too much.\\
We can now prove $\Cref{thm:5}$.

\begin{proof} [Proof of \Cref{thm:5}] $\newline$
As in the proof of \Cref{prop:field}, we can assume that $T$ satisfies the properties
\begin{enumerate}
    \item For all $i \in \vaset$, $[T]_{(i,i)} \geq 1$.
    \item For all $\bm{k} \in \N^n$, $|T(\bm{k})| \geq 2 |\bm{k}|$, hence 
    $$|T^{-1}(\bm{k})| \leq \frac12 |\bm{k}|$$ for all $\bm{k}$ such that $T^{-1}(\bm{k}) \in \N^n$.
\end{enumerate}

Proceeding the same way as in the proof of \Cref{prop:field}, we again get that $a(\bm{k})$ satisfies a recurrence relation of the form
\begin{align*}
   a(\bm{k})&=\sum_{\substack{\bm{j} \in \llbracket     0,N \rrbracket^n \\ \bm{j} \neq \bm{\nu}}}{\frac{-c_{\bm{j}, 0}}{c_{\bm{\nu}, 0}} \cdot a(\bm{k}+\bm{\nu}-\bm{j})}
    -\sum_{l=1}^m{\sum_{\bm{j} \in \llbracket     0,N \rrbracket^n}{\frac{c_{\bm{j}, l}}{c_{\bm{\nu}, 0}} \cdot a(T^{-l}(\bm{k}+\bm{\nu}-\bm{j}))}}&
\end{align*}
where the $c_{\bm{j},l}$ are the coefficients of the polynomials $p_l$ appearing in a $T$-Mahler equation for $f$ (with $p_0 \neq 0$), $N$ is the maximal degree in any variable of the $p_l$, and $\bm{\nu}$ is the valuation of $p_0$ for the order of \Cref{defi:order}.\\
As before, for $|\bm{k}|>|\bm{\nu}|$, all terms $a(\bm{p})$ appearing on the right side of this equation have $\bm{p}<\bm{k}$.\\
Furthermore, all terms $a(\bm{p})$ appearing also have $p_m \leq k_m+N$ for all $m \in \vaset$.\\
Indeed, if $\bm{p} \in E_{T^l}(\bm{k}+\bm{\nu}-\bm{j})$, then
$$t_{m1l}p_1+\ldots+t_{mml}p_m+\ldots+t_{mnl}p_n=k_m+\nu_m-j_m$$
which gives
$$p_m \leq k_m+\nu_m-j_m \leq k_m +N$$
since $t_{mml}\geq 1$ using the first condition in \Cref{prop:red} (note that $T^l$ still satisfies the conditions).
\\
We let $M=N+1$. We let $C>1$ be large enough so that
$$|a(\bm{k})| \leq C^{M^n |\bm{k}|+M^{n-1}k_1+\ldots+Mk_{n-1}+k_n+1}=C^{\omega_M(\bm{k})+1}$$
for all $\bm{k}$ such that $|\bm{k}| \leq |\bm{\nu}|$ (this is a finite number of conditions), and we also ask that $C>L(m+1)(N+1)^n$ where $L=\max\{\left | \frac{c_{\bm{j}, l}}{c_{\bm{\nu}, 0}} \right|  \}$.\\
We now prove by induction on $\bm{k}$ that
$$|a(\bm{k})| \leq C^{\omega_M(\bm{k})+1}$$
for all $\bm{k} \in \N^n$.

We know that the inequality is true for $\bm{k}=(0,...,0)$ by construction of $C$.\\
Assume that the inequality is true for all $\bm{p} <\bm{k}$.\\
If $|\bm{k}|\leq |\bm{\nu}|$ the inequality holds by construction.\\
If $|\bm{k}|> |\bm{\nu}|$, we use the recurrence relation. We know that for all terms $a(\bm{p})$ appearing on the right side of the equation, we have $\bm{p}<\bm{k}$. By \Cref{lem:weights}, we obtain that for all terms $a(\bm{p})$ appearing on the right side of the recurrence relation, we have $\omega_M(\bm{p}) \leq \omega_M(\bm{k})-1$, and therefore

$$|a(\bm{p})| \leq C^{\omega_M(\bm{p})+1}
\leq C^{\omega_M(\bm{k})}
=\frac{1}{C}(C^{\omega_M(\bm{k})+1})
$$
(using the induction hypothesis).\\
Since there are at most $(m+1)(N+1)^n$ such terms on the right side of the recurrence relation, we obtain
$$|a(\bm{k})| \leq L(m+1)(N+1)^n\cdot \frac{1}{C}(C^{\omega_M(\bm{k})+1}) \leq C^{\omega_M(\bm{k})+1} \, .$$
This ends the proof of the inequality 
$$|a(k_1,...,k_n)| \leq C^{M^n |\bm{k}|+M^{n-1}k_1+\ldots+Mk_{n-1}+k_n+1}\, .$$
Since $$M^n |\bm{k}|+M^{n-1}k_1+\ldots+Mk_{n-1}+k_n \leq (M^n+M^{n-1})|\bm{k}|$$
we obtain
$$|a(k_1,\ldots,k_n)| \leq (C')^{k_1+\ldots+k_n+1}$$
with $C'=C^{M^n+M^{n-1}}$.\\
This shows that $f$ is analytic on the open ball of radius $r=\frac{1}{C'} >0$.\\

Now, for any $\nu \in \mathcal{M}(\K)$,  doing the exact same thing but replacing the standard absolute value by the absolute value $|\cdot|_{\nu}$, we also get an inequality of the form
$$|a(k_1,\ldots,k_n)|_{\nu} \leq C_{\nu}^{k_1+\ldots+k_n+1}$$
for some $C_{\nu} >1$. Let $S$ be a finite set of places of $\K$ containing the archimedean ones such that all coefficients of $f$ are $S$-integers. We take $C=\max_{\nu \in S}{C_{\nu}}>1$ (which is well defined as $S$ is finite). Then the inequality
$$|a(k_1,\ldots,k_n)|_{\nu} \leq C^{k_1+\ldots+k_n+1}$$
holds automatically for $\nu \in S$, but it also holds for $\nu \notin S$, since in that case $|a(k_1,\ldots,k_n)|_{\nu}\leq 1$.
\end{proof}

\subsection{Meromorphy on the unit ball} \label{sect:3.3} $\newline$

The main result of this subsection is that any $M_T$-function can be extended to a meromorphic function on the ball $B(0,1)$. In fact there is a natural way to define a domain $U_T$ (which in general strictly contains $B(0,1)$) so that any $M_T$-function extends as a meromorphic function on $U_T$. Let us start with the definition of this domain $U_T$.

\begin{defi}
If $\z \in \C^n$, we define $||\z||=\max\{|z_i|, i \in \vaset\}$. This is a norm on $\C^n$. We will write $B(\al,r)$ for the ball of radius $r$ centered at $\al \in \C^n$.
\end{defi}

\begin{defi} \label{defi:11} 
For a monomial operator $T$, we let $U_T=\{\z \in \C^n| \ T^k\z \longrightarrow 0\}$.
\end{defi}

The statement of the main result is the following.

\begin{prop} [Meromorphy on the unit ball] \label{prop:15} $\newline$
Let $T$ be in the class $\mathcal{F}$. Then any $M_T$-function extends as a meromorphic function on the domain $U_T$. In particular it extends as a meromorphic function on $B(0,1)$. 
\end{prop}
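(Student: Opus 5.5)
We will extend $f$ by iterating the Mahler equation outward from the neighbourhood of $\bm{0}$ where, by \Cref{thm:5}, $f$ is already analytic. By \Cref{prop:comp} we may work with the system form: $Y=(f,Tf,\ldots,T^{m-1}f)^*$ satisfies $Y(\z)=A(\z)Y(T\z)$ with $A\in\GL_m(\A(\z))$. Each coordinate $T^if$ is a power series converging on some polydisk $B(0,r)$, with $r>0$ given by \Cref{thm:5}. Iterating the system gives, for every $k\in\N$,
$$Y(\z)=A(\z)A(T\z)\cdots A(T^{k-1}\z)\,Y(T^k\z) \, .$$
The matrix product is a matrix of rational functions, hence meromorphic on all of $\C^n$. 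The factor $Y(T^k\z)$ is holomorphic on the open set $W_k=\{\z : T^k\z\in B(0,r)\}$, because $\z\mapsto T^k\z$ is a polynomial map. So the right-hand side defines a meromorphic function $Y_k$ on $W_k$.

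We next check that $U_T=\bigcup_k W_k$ and that the $W_k$ increase. If $T^k\z\to 0$, then some $T^k\z$ lies in $B(0,r)$, so $U_T\subseteq\bigcup_k W_k$. For the monotonicity and the reverse inclusion, we replace $T$ by a power $T'$ as in \Cref{prop:red}; this is harmless since $M_T=M_{T'}$ by \Cref{prop:stab}. Then every monomial $T'z_i$ has positive length and diagonal exponent $\geq 1$. Hence, taking $r\le 1$, we get $\|T'\z\|\le\|\z\|$ whenever $\|\z\|<1$, so $W_k\subseteq W_{k+1}$. The reverse inclusion is the delicate point: if $T^k\z\in B(0,r)$ with $r<1$, then by \Cref{lem:expending} the lengths of the monomials $T^jz_i$ tend to infinity, which forces $T^{k+j}\z\to 0$. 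Thus $\bigcup_k W_k= U_T$, and this union is open.

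For compatibility, on $W_k\subseteq W_{k+1}$ we have $Y_k=Y_{k+1}$ as meromorphic functions. Indeed, on the connected neighbourhood $B(0,r)\cap W_k$ of $\bm{0}$ both equal $Y$, by the functional equation applied at $T^k\z$, which is valid as an identity of convergent power series near $\bm{0}$. By the identity theorem for meromorphic functions, the equality then propagates to each connected component of $W_k$ meeting that neighbourhood. The main obstacle is exactly this connectedness issue, namely that a component of $W_k$ might avoid $\bm{0}$. To handle it, we will show that $W_k$ is star-shaped in the polydisk sense: if $\z\in W_k$, then $(tz_1,\ldots,tz_n)\in W_k$ for all $t\in[0,1]$, since each coordinate of $T^k(t\z)$ has modulus at most that of $T^k\z$. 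Indeed, the monomials have nonnegative exponents and $|t|\le 1$. This makes $W_k$ connected and containing $\bm{0}$, so the $Y_k$ glue to a meromorphic function on $U_T$ whose first coordinate extends $f$.

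Finally, $B(0,1)\subseteq U_T$ follows from the same argument. For $\|\z\|=\rho<1$, each coordinate of $T^k\z$ has modulus at most $\rho^{\len(T^kz_i)}$, which tends to $0$ by \Cref{lem:expending}.
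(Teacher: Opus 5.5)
Your argument is correct, and it reaches the result by a different route than the paper. The paper first proves that $U_T$ is open and star-shaped, hence simply connected. It then continues $f$ along each radial segment $t\mapsto t\z$ by a supremum argument. At the critical parameter $t_0$ it picks $k$ with $T^k(t_0\z)\in B(0,r)$ and uses a $T^k$-Mahler equation with $p_0\neq 0$, namely $f=-p_0^{-1}\sum_{i\geq 1}p_i\,f(T^{ki}\z)$, to push the continuation past $t_0$. Single-valuedness on $U_T$ is then inferred from simple connectedness.

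You instead construct the extension globally and explicitly, as $Y_k=A(\z)\cdots A(T^{k-1}\z)\,Y(T^k\z)$ on the increasing exhaustion $W_k=(T^k)^{-1}(B(0,r))$ of $U_T$, and then glue. This gives a closed formula for the continuation. It also replaces the path-continuation/monodromy step for meromorphic germs in several variables, which the paper invokes without detail, by the plain identity theorem. Star-shapedness appears in both proofs, but you only need it to make each $W_k$ connected. The paper's version, in exchange, works directly with the scalar equation and only uses local information along each segment.

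Two of your steps can be streamlined.

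First, connectedness of $W_k$ is not actually needed. The relation $Y(\bm{w})=A(\bm{w})Y(T\bm{w})$ holds on all of $B(0,r)$, and $W_k$ is the full preimage of $B(0,r)$ under $T^k$. Composing with $T^k$ therefore yields $Y_k=Y_{k+1}$ on $W_k$ directly; the denominators become $q(T^k\z)$, which are nonzero polynomials by \Cref{lem:1}.

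Second, passing to $T'$ is unnecessary for monotonicity. Non-singularity alone gives every column sum $\geq 1$, hence $T(B(0,r))\subseteq B(0,r)$ for $r\leq 1$. If you do pass to $T'$, you should state that $U_{T'}=U_T$, which is exactly what your reverse-inclusion argument shows.
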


In order to prove this proposition, we first show that $B(0,1) \subseteq U_T$.

\begin{lem} \label{prop:13} 
If $T$ is in the class $\mathcal{F}$, then $B(0,1) \subseteq U_T$.    
\end{lem}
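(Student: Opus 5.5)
We must show that for $\z \in B(0,1)$, i.e. $||\z||=r<1$, we have $T^k\z \to \bm{0}$. The plan is to control each coordinate of $T^k\z$ through the length of the corresponding monomial, using the growth statement of \Cref{lem:expending}.

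First, the $i$-th coordinate of $T^k\z$ is the monomial $T^k z_i = \z^{T^k(e_i)}$, where $T^k(e_i)\in\N^n$ is the $i$-th column of $T^k$. For any $\bm{j} \in \N^n$ we have the elementary bound
$$|\z^{\bm{j}}| = \prod_{l=1}^n |z_l|^{j_l} \leq r^{j_1+\ldots+j_n} = r^{|\bm{j}|} = r^{\len(\z^{\bm{j}})} \, .$$
Hence $|(T^k\z)_i| \leq r^{\len(T^k z_i)}$ for every $i \in \vaset$ and every $k \in \N$. This uses only that the exponents are nonnegative integers, which holds since $T \in \M_n(\N)$.

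Next, by part (2) of \Cref{lem:expending}, which applies because $T$ is in the class $\mathcal{F}$, $\len(T^k z_i) \to +\infty$ as $k \to \infty$ for each $i$. Since $0 \le r<1$, it follows that $r^{\len(T^k z_i)} \to 0$ (when $r=0$ this is immediate, since lengths are $\geq 1$). Therefore each coordinate of $T^k\z$ tends to $0$, so $||T^k\z|| \to 0$ and $\z \in U_T$. If a quantitative statement is wanted, part (1) gives the rate $||T^{kn}\z|| \le r^{2^k}$, which is uniform on smaller balls $B(0,r)$, $r<1$.

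No step is a real obstacle: the content is entirely in \Cref{lem:expending}, and the only point to check is that the column convention matches. The $i$-th coordinate of $T\z$ is $\prod_j z_j^{t_{ji}}$, whose exponent vector is the $i$-th column $T(e_i)$, and this is consistent with $T\z^{\bm{k}}=\z^{T(\bm{k})}$ from \Cref{not:1}.
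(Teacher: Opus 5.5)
Your proof is correct and follows the paper's own argument: you bound each coordinate of $T^k\z$ by $||\z||^{\len(T^k z_i)}$ and use part (2) of \Cref{lem:expending} to send these lengths to $+\infty$. Your extra remark that part (1) gives the uniform rate $||T^{kn}\z|| \le r^{2^k}$ is also valid, although the lemma does not need it.
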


\begin{proof}
Let $\z \in B(0,1)$. Then for $a=||\z||<1$, we know that for all $i \in \vaset$, $|z_i|\leq a$. By the second point of \Cref{lem:expending}, the lengths of all $T^kz_i$ tend to $+\infty$ and so for all $p \in \N$, all coordinates of $T^k\z$ have modulus at most $a^p$ for $k$ large enough. This shows that $T^k\z \longrightarrow 0$.
\end{proof}

The next step is to prove that an $M_T$-function always extends as a meromorphic function on $U_T$. This is done via path continuation.

\begin{lem} \label{lem:star}
Let $T$ be in the class $\mathcal{F}$. Then $U_T$ is open and star-shaped around $\bm{0}$, and in particular simply connected.
\end{lem}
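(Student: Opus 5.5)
We need to prove that $U_T$ is open and star-shaped around $\bm{0}$; simple connectivity then follows, since a star-shaped set is contractible via $(\z,t)\mapsto t\z$.

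\textbf{Star-shapedness.} Let $\z\in U_T$ and $t\in[0,1]$. Each coordinate of $T^k(t\z)$ equals the corresponding coordinate of $T^k\z$ multiplied by $t^{\ell}$, where $\ell=\len(T^k z_i)\geq 1$. Since $|t^{\ell}|\le 1$, we get $\|T^k(t\z)\|\le \|T^k\z\|\to 0$, so $t\z\in U_T$. The case $t=0$ is trivial. In fact the same argument shows that $U_T$ is stable under coordinatewise multiplication by any $\bm{w}$ with $\|\bm{w}\|\le 1$.

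\textbf{Openness.} Fix $\z\in U_T$. Since $T^k\z\to\bm{0}$, choose $k_0$ with $\|T^{k_0}\z\|<1/2$. The map $\bm{w}\mapsto T^{k_0}\bm{w}$ is polynomial, hence continuous, so there is a neighbourhood $V$ of $\z$ with $\|T^{k_0}\bm{w}\|<1$ for all $\bm{w}\in V$, that is, $T^{k_0}\bm{w}\in B(0,1)$. By \Cref{prop:13}, $B(0,1)\subseteq U_T$, hence $T^{k}(T^{k_0}\bm{w})\to\bm{0}$. Writing $T^{k}(T^{k_0}\bm{w})=T^{k+k_0}\bm{w}$, this gives $T^{j}\bm{w}\to\bm{0}$ as $j\to\infty$, so $\bm{w}\in U_T$. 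Thus $V\subseteq U_T$ and $U_T$ is open. Note that the composition of the monomial maps is just iteration of the same map, so the convention issue in \Cref{not:1} does not arise.

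There is no real obstacle here. The only point needing care is that \Cref{prop:13} uses the class $\mathcal{F}$ through \Cref{lem:expending}, and this is exactly what makes the openness argument work.
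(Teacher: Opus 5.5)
Your proof is correct and follows essentially the same route as the paper. Openness comes from continuity of $T^{k_0}$, which maps a neighbourhood of $\z$ into $B(0,1)$, after which you invoke \Cref{prop:13}. Star-shapedness comes from the fact that each coordinate of $T^k(t\z)$ is the corresponding coordinate of $T^k\z$ multiplied by $t$ raised to a column sum of $T^k$; your remark about stability under coordinatewise multiplication by $\bm{w}$ with $\|\bm{w}\|\le 1$ is a correct extra not needed for the lemma.
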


\begin{proof} $\newline$

\begin{enumerate}
    \item Let $\z \in U_T$. There is a $k \in \N$ such that $T^k\z \in B(0,1)$. Take $r>0$ such that $B(T^k\z,r) \subseteq B(0,1)$. Since $T^k$ is continuous, there exists $s>0$ such that for all $\z' \in B(\z,s)$, we have $T^k\z' \in B(T^k\z,r) \subseteq B(0,1)$.\\
Then for all $\z' \in B(\z,s)$, $T^m\z'=T^{m-k}T^k\z'$ tends to $0$ as $m$ tends to $+\infty$ by \Cref{prop:13}. This shows that $U_T$ is open.
\item Let $\z \in U_T$. For $t \in [0,1]$, let $\z_t=t\z=(tz_1,\ldots,tz_n)$. Then
$$T^m\z_t=(t^{k_{1,m}}(T^m\z)_1,\ldots,t^{k_{n,m}}(T^m\z)_n)$$
for some integers $k_{i,m} \geq 0$ (which are the sums of the terms in each column of $T^m$).
The modulus of each coordinate of $T^m\z_t$ is smaller than the modulus of the corresponding coordinate of $T^m\z$, so $\z_t \in U_T$ for all $t \in [0,1]$. This shows that $U_T$ is star-shaped around $0$, and therefore simply connected. 
\end{enumerate}
\end{proof}

We can now prove \Cref{prop:15}.

\begin{proof} [Proof of \Cref{prop:15}]  $\newline$
By \Cref{lem:star}, $U_T$ is star shaped around $0$ so it suffices to prove that for all $\z \in U_T$, $f$ can be continued meromorphically along the line segment which links $0$ and $\z$.\\
Let $\z \in U_T$ and consider the path given by $\gamma(t)=\z_t=t\z$ for $t \in [0,1]$. By \Cref{thm:5}, $f$ is analytic on a neighborhood of the origin $B(0,r)$ for some $0<r<1$. In particular, $f$ extends along $\gamma$ up to $t=\frac{r}{||\z||}$. Let $t_0 >0$ be the supremum of the $t \in [0,1]$ such that $f$ extends as a meromorphic function along the path $\gamma|_{[0,t]}$. Assume that $t_0 <1$. Then consider $\z_{t_0}$ which we know is in $ U_T$. By hypothesis, there is a $k \in \N$ such that $T^k\z_{t_0} \in B(0,r)$. Since $T^k$ is continuous, there is a neighborhood $V$ of $\z_{t_0}$ such that $T^kv \in B(0,r)$ for all $v \in V$.\\
Now since $f$ is $T$-Mahler, it is also $T^k$-Mahler and we can take a Mahler equation
$$\sum_{i=0}^m{p_i(\z)f((T^k)^i\z)}=0$$
with $p_0 \neq 0$, which gives
$$f(\z)=-\frac{1}{p_0(\z)}\sum_{i=1}^m{p_i(\z)f((T^k)^i\z)} \, .$$
If $\z \in V$, then $T^k\z \in B(0,r)$ and therefore $(T^{k})^i\z \in B(0,r)$ for all $i \in \llbracket 1,m \rrbracket$ (note that the ball $B(0,r)$ is stable under $T^k$ because the length of all coordinates of $T^k\z$ is at least $1$). Since $f$ is analytic on $B(0,r)$, the right-hand side of this equation is a meromorphic function on $V$, and therefore $f$ extends to a meromorphic function on $V$ which implies that $f$ extends as a meromorphic function along $\gamma|_{[0,t_1]}$ for some $t_1 >t_0$. This contradicts the maximality of $t_0$.\\
Therefore $t_0=1$ and $f$ extends as a meromorphic function along all line segments $\gamma$, which ends the proof.
\end{proof}

\subsection{Global analyticity and meromorphy} \label{sect:3.4} $\newline$

In this subsection, we prove that $M_T$-functions (with $T$ in the class $\mathcal{F}$) are meromorphic on the $p$-adic unit ball for every prime $p$.\\
Let $f$ be an $M_T$-function with $T$ in the class $\mathcal{F}$. There exists a number field $\K$ containing all the coefficients of $f$.\\
For all $p \in \mathbf{P} \cup \{\infty\}$, we fix $(\C_p,|\cdot|_p)$ a complete algebraically closed field containing $\Q_p$ (for $p=\infty$, we set $\C_{\infty}=\C$ by definition), and we fix an embedding $\sigma_p$ of $\K$ into $\C_p$. This way, the power series $f$ can be thought of as a power series with coefficients in $\C_p$, and therefore as the germ of a function from $\C_p^n$ to $\C_p$.\\

Next, we recall the definition of analytic and meromorphic functions in the $p$-adic setting (with $p<\infty$).\\
We first define the Tate algebra as the set of formal power series with coefficients in $\C_{p}$ which converge on the closed unit polydisk in $\C_{p}^n$, that is
$$A_{p}=\left\{\sum_{\bm{k} \in \N^n}{a(k_1,\ldots,k_n) z_1^{k_1}\ldots z_n^{k_n}} \in \C_{p}[[\z]] \Bigg{|} \lim_{|\bm{k}| \longrightarrow +\infty}{|a(\bm{k})|_p}=0 \right\} \, .$$
Furthermore, if $R \in \R_+^*$ we also let
$$A_{p}(R)=\left\{\sum_{\bm{k} \in \N^n}{a(k_1,\ldots,k_n) z_1^{k_1}\ldots z_n^{k_n}} \in \C_{p}[[\z]] \Bigg{|} \lim_{|\bm{k}| \longrightarrow +\infty}{|a(\bm{k})|_p \ R^{|\bm{k}|}}=0 \right\} \, .$$
We refer to \cite{bosch2014lectures} for an overview of this notion.\\
We will say that a power series $f$ with coefficients in $\C_p$ is analytic on the closed ball
$$\overline{B_p}(0,R)=\{\z \in \C_p^n| \ \forall i \in \vaset, |z_i|_p\leq R\}$$
if $f \in A_p(R)$, and meromorphic on $\overline{B_p}(0,R)$ if $f \in \Frac(A_p(R))$.\\
The closed polydisks $(\overline{B_p}(0,R), R<1)$ form an affinoid covering of the rigid space
$$B_p(0,1)=\{\z \in \C_p^n| \ \forall i \in \vaset, |z_i|_p< 1\} \, .$$
In accordance with the standard definition (see Section 4.6 of \cite{Rigid}), we will say that $f$ is meromorphic on $B_p(0,1)$ if $f$ is meromorphic on $\overline{B_p}(0,R)$ for all $R<1$.

\begin{rk} \label{rk:emb}
A different choice of embedding $\sigma_p$ of $\K$ into $\C_p$ would give a different notion of \enquote{meromorphic on the $p$-adic unit ball}.\\
However if $\sigma'_p$ is another such embedding, then we can extend both $\sigma_p$ and $\sigma'_p$ to isomorphisms between $\A$ and
$$\A^{(p)}=\{x \in \C_p \, | \, x \ \text{is algebraic} \} \, .$$
Then $\sigma=(\sigma_p)^{-1} \circ \sigma'_p$ is an isomorphism of $\A$. Thus, studying the power series
$$ f(z_1,...,z_n)=\sum_{k_1,\ldots,k_n \geq 0}{a(k_1,...k_n)z_1^{k_1}\ldots z_n^{k_n}}$$ under the embedding $\sigma_p'$ is the same as studying
$$f^{\sigma}(z_1,\ldots,z_n)=\sum_{k_1,\ldots,k_n \geq 0}{\sigma(a(k_1,...k_n))z_1^{k_1}\ldots z_n^{k_n}}$$
under the embedding $\sigma_p$.\\
If we let
$$\sum_{l=0}^m{p_l(\z)f(T^l\z)}=0$$
be a $T$-Mahler equation satisfied by $f$, then we can write
$$\sum_{l=0}^m{p_l^{\sigma}(\z)f^{\sigma}(T^l\z)}=0$$
and thus $f^{\sigma}$ is still an $M_T$-function.\\
Using this remark, the statement \enquote{all $M_T$-functions are meromorphic on the open unit ball in $\C_p^n$} actually does not depend on the embedding that is chosen. 
\end{rk}

The main result of this subsection is the following, which can be thought of as an extension of the main results of \Cref{sect:3.2} and \Cref{sect:3.3} to the global setting (looking at all places simultaneously).

\begin{prop} \label{prop:merop} 
Let $f$ be an $M_T$-function with $T$ in the class $\mathcal{F}$. Let $p \in \mathbf{P} \cup \{\infty\}$ and view $f$ as a power series with coefficients in $\C_p$ as above. Then using the definitions above, we have
\begin{enumerate}
    \item There exists an $r>0$ such that for all $p\in \mathbf{P} \cup \{\infty\}$, $f$ is analytic on the ball $\overline{B_p}(0,r)$.
    \item For all $p\in \mathbf{P} \cup \{\infty\}$, $f$ is meromorphic on $B_p(0,1)$.
\end{enumerate}
\end{prop}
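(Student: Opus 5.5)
Part (1) follows directly from \Cref{thm:5}. That proposition gives a constant $C>1$ such that $|a(\bm{k})|_{\nu}\le C^{|\bm{k}|+1}$ for every place $\nu\in\mathcal{M}(\K)$ and every $\bm{k}\in\N^n$. For a prime $p$ and the fixed embedding $\sigma_p$, the absolute value $|\sigma_p(\cdot)|_p$ restricted to $\K$ is a positive power $|\cdot|_\nu^{1/i}$ of one of the normalized places, with $i=d_\nu/d\ge 1/d$. The same holds at $p=\infty$ with $i\in\{1/d,2/d\}$. Hence $|\sigma_p(a(\bm{k}))|_p\le C^{d(|\bm{k}|+1)}$ uniformly in $p$. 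Taking any $r<C^{-d}$, we get $|a(\bm{k})|_p r^{|\bm{k}|}\le C^d (C^d r)^{|\bm{k}|}\to 0$. So $f\in A_p(r)$ for every $p$, including the archimedean case, where this gives absolute convergence on the closed polydisk.

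For part (2) with $p=\infty$, this is \Cref{prop:15} combined with \Cref{prop:13}. For a finite prime $p$, I will avoid rigid path continuation and use the functional equation directly, in the style of the proof of \Cref{prop:15}. Fix $R<1$. By \Cref{lem:expending}, $\len(T^kz_i)\to+\infty$ for all $i$. Choose $k$ with $R^{\len(T^kz_i)}\le r$ for all $i$. Then $T^k$ maps $\overline{B_p}(0,R)$ into $\overline{B_p}(0,r)$, and by \Cref{prop:red} we may also take $T^k$ to have every column of length $\ge 1$, so that $\overline{B_p}(0,r)$ is stable under $T^k$. By \Cref{prop:stab} and \Cref{thm:1}, $f$ satisfies a $T^k$-Mahler equation with $p_0\ne 0$, which gives
$$f(\z)=-\frac{1}{p_0(\z)}\sum_{i=1}^m p_i(\z) f(T^{ki}\z).$$
This is an identity of formal power series. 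Each $f(T^{ki}\z)$ lies in $A_p(R)$: composing $f\in A_p(r)$ with the monomial map $T^{ki}$, every monomial $\z^{\bm{j}}$ is sent to $\z^{T^{ki}(\bm{j})}$. The map $\bm{j}\mapsto T^{ki}(\bm{j})$ is injective, and
$$R^{|T^{ki}(\bm{j})|}=\prod_l \bigl(R^{\len(T^{ki}z_l)}\bigr)^{j_l}\le r^{|\bm{j}|}.$$
So the coefficients of $f(T^{ki}\z)$ are coefficients of $f$ placed at distinct exponents, and the decay condition for $A_p(R)$ holds. Since $p_0\in A_p(R)$ is a nonzero polynomial, it is a nonzero element of the domain $A_p(R)$. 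It follows that $f=g/p_0$ with $g\in A_p(R)$, so $f\in\Frac(A_p(R))$. As $R<1$ was arbitrary, $f$ is meromorphic on $B_p(0,1)$.

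The main obstacle I expect is bookkeeping rather than substance: making the normalization comparison in part (1) uniform over all $p$ at once, and making sure the formal identity is interpreted inside $\Frac(A_p(R))$. Part (2) needs no analytic continuation, because the Mahler equation globalizes $f$ in one step once $T^k$ contracts $\overline{B_p}(0,R)$ into the convergence polydisk. If needed, \Cref{rk:emb} can be invoked to see that the choice of $\sigma_p$ does not matter.
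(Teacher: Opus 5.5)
Your proposal is correct and follows the paper's proof. Part (1) is read off from \Cref{thm:5}, and part (2) for finite $p$ uses a $T^k$-Mahler equation with $k$ large enough that each $T^{ki}$ pushes the polydisk of radius $R$ into the convergence polydisk of radius $r$, which puts the numerator in $A_p(R)$. Your refinements are minor: you track the normalization exponent to get $r<C^{-d}$, and you check $f(T^{ki}\z)\in A_p(R)$ by a direct coefficient estimate rather than by evaluating at a point $x$ with $|x|_p=R$, so you handle every $R<1$ at once and avoid \Cref{lemma:value} and the density step.
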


For the proof of \Cref{prop:merop}, we first recall a standard result
\begin{lem} \label{lemma:value}
Let $p \in \mathbf{P}$. Then for all $R \in p^{\Q}$, there exists $x \in \C_{p}$ with $|x|_p=R$. 
\end{lem}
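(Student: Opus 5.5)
The proof will rely only on the algebraic closedness of $\C_p$ and the normalization of $|\cdot|_p$ on $\Q_p$. We may assume $|p|_p=p^{-1}$; the normalization of \Cref{sect:3.1} only rescales this by a fixed positive power, and then the same argument applies with $p^{\Q}$ replaced by the corresponding set of values. Let $R=p^{a/b}$ with $a\in\Z$ and $b\in\N^*$.

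Since $\C_p$ is algebraically closed, the polynomial $X^b-p^{-a}\in\C_p[X]$ has a root $x\in\C_p$, so that $x^b=p^{-a}$. By multiplicativity of the absolute value,
$$|x|_p^b=|p^{-a}|_p=p^{a}.$$
Because $|x|_p$ is a positive real number, taking $b$-th roots gives $|x|_p=p^{a/b}=R$, which is the desired element.

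The argument is elementary, and the only point requiring care is the normalization of $|\cdot|_p$ just discussed.
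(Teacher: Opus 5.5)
Your proof is correct and is essentially the paper's argument: the paper also extracts a root, in the algebraically closed field $\C_p$, of a suitable power of $p$ (namely $x=p^{-ad/(bd_p)}$, a root of $X^{bd_p}=p^{-ad}$) and concludes by multiplicativity of $|\cdot|_p$. One small point: under the paper's normalization $|p|_p=p^{-d_p/d}$ the exponent $d_p/d$ is rational, so the set of attained values is still exactly $p^{\Q}$. Hence there is no need to replace $p^{\Q}$ by another set; one just takes the root of degree $bd_p$ instead of $b$.
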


\begin{proof} [Proof of \Cref{lemma:value}]
Let $R=p^{\frac{a}{b}}$ with $a \in \Z$ and $b \in \N^*$. We know that $p \in \Z$ has absolute value $p^{-\frac{d_p}{d}}$ using the conventions at the start of \Cref{sect:3.1}. Taking $(bd_p)$-th roots in $\C_{p}$, we get that $\left|p^{\frac{-ad}{bd_p}}\right|_p=p^{\frac{a}{b}}=R$.
\end{proof}

We are ready to prove \Cref{prop:merop}.
\begin{proof} [Proof of \Cref{prop:merop}]
The first point is a reformulation of \Cref{thm:5} after taking $r<\frac{1}{C}$.\\

For the second point, we will focus on the case $p<\infty$, as the case $p=\infty$ was done in \Cref{prop:15} (also using \Cref{rk:emb} for different embeddings into $\C$).\\

Let $R<1$, and assume for now that $R \in p^{\Q}$. By \Cref{lem:expending}, there exists some $J \in \N$ such that for all $j \geq J$, the length of all coordinates in $T^j\z$ is greater than $L$, where $L$ is chosen big enough so that $R^L<r$. Now write a $T^J$-Mahler equation for $f$ in the form:
$$f(\z)=\frac{- \sum_{i=1}^m{q_i(\z)f(T^{Ji}\z)}}{q_0(\z)}=\frac{h(\z)}{q_0(\z)} \, .$$
By the previous lemma, we can find $x \in \C_{p}$ such that $|x|_p=R$. The numerator
$$h(\z)=- \sum_{i=1}^m{q_i(\z)f(T^{Ji}\z)}$$
converges at $\z_0=(x,x,\ldots,x)$, since every coordinate of $T^{Ji}\z_0$ has absolute value less than $r$. Therefore, the general term of that series at $\z_0=(x,x,\ldots,x)$ tends to $0$ for $|\cdot|_p$ (see Lemma 3 page 10 in \cite{bosch2014lectures}), which means exactly that $h \in A_{p}(R)$. Since $q_0(\z)$ is a polynomial, it is also in $A_{p}(R)$, and therefore $f \in \Frac(A_{p}(R))$.\\
In the general case, let $R<1$ not necessarily in $p^{\Q}$. Since $p^{\Q}$ is dense in $\R_+^*$, we can take an $R' \in p^{\Q}$ such that $R<R'<1$. Then we know that $$f\in \Frac(A_{p}(R')) \subseteq \Frac(A_{p}(R))$$
By definition, this shows that $f$ is meromorphic on $B_p(0,1)$.
\end{proof}

\section{Proofs of \Cref{thm:dic} and \Cref{thm:lift}}
The goal of this section is to prove the Lifting theorem (\Cref{thm:lift}). From Theorem 3.3 in \cite{adamczewski2020mahler}, we know that this theorem is true if the extension $$\A(\z)(f_1(\z),\ldots,f_n(\z))$$ is a regular extension of $\A(\z)$.\\
We start this section by proving \Cref{thm:dic}. The proof uses the result below which follows from Corollary 1.3 in \cite{bell2024d}. 

\begin{theo} \label{thm:bell}
Let $\K$ be a number field and let $S$ be a finite set of places of $\K$ containing the archimedean ones. Let
$$F(\z)=\sum_{\bm{k} \in \N^n}{f(\bm{k}) \z^{\bm{k}}}$$
be a power series in $n$ variables with coefficients in $\K$. We assume that
\begin{enumerate}
    \item $F$ is algebraic over $\A(\z)$.
    \item The $f(\bm{k})$ are all $S$-integers.
    \item The logarithmic Weil height of the coefficients satisfies $h(f(\bm{k}))=o(|\bm{k}|)$.
\end{enumerate}
Then $F$ is a rational function.
\end{theo}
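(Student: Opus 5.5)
The plan is to deduce the statement directly from Corollary 1.3 of \cite{bell2024d}. The work consists in checking that its hypotheses are met, or implied by ours, in the normalizations of \Cref{sect:3.1}. I recall that result as follows: if $F\in\K[[\z]]$ is D-finite (or algebraic) over $\A(\z)$, has coefficients that are $S$-integers for a finite set of places $S$, and has coefficients of sub-exponential height growth, then $F$ is rational. So the proof should be a verification rather than new mathematics.

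\textbf{Step 1: algebraic implies D-finite.} Assume $F$ is algebraic over $\A(\z)$, with minimal polynomial $P(\z,Y)$. Differentiating $P(\z,F)=0$ with respect to $z_i$ gives
$$\partial_{z_i}F=-\frac{(\partial_{z_i}P)(\z,F)}{(\partial_Y P)(\z,F)}\in\A(\z)(F).$$
Here $(\partial_Y P)(\z,F)\neq 0$ by separability in characteristic $0$. Hence all derivatives $\partial^{\bm{\beta}}F$ lie in the finite-dimensional $\A(\z)$-vector space $\A(\z)(F)$. This is the multivariate D-finiteness required in \cite{bell2024d}.

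\textbf{Step 2: matching the arithmetic and height hypotheses.} Hypothesis (2) is exactly the $S$-integrality condition. Hypothesis (3) is the growth condition, in whatever form \cite{bell2024d} phrases it: $h(f(\bm{k}))=o(|\bm{k}|)$, or equivalently $\max_{|\bm{k}|\le N}h(f(\bm{k}))=o(N)$. The only check is that our normalization of the absolute values (exponents $d_v/d$) and of $h$ agrees with theirs up to a constant factor. A constant factor does not affect an $o(\cdot)$ statement.

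\textbf{Step 3: multivariate versus univariate.} If Corollary 1.3 is stated only in one variable, I would reduce to that case via a family of specializations $F_{\bm{a}}(t)=F(a_1t,\dots,a_nt)$ with $\bm{a}\in\mathcal{O}_{\K,S}^n$. Each $F_{\bm{a}}$ is algebraic, has $S$-integer coefficients, and has heights $o(N)$: the coefficient of $t^N$ is a sum of at most $(N+1)^n$ terms, each of height $o(N)+O(N)\max_i h(a_i)$. Taking the $a_i$ to be roots of unity keeps the $O(N)$ part away. Each specialization is then rational, and the denominators are bounded in degree by $\deg_Y P$-type quantities. Rationality of $F$ would follow by a Kronecker-type substitution $z_i\mapsto t^{D^{i-1}}$ together with a recovery argument.

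\textbf{Main obstacle.} I expect the main obstacle to be exactly the bookkeeping in Steps 2 and 3: confirming that the cited corollary is genuinely multivariate, or else making the reduction to one variable rigorous. If it is multivariate, the theorem is immediate after Step 1.
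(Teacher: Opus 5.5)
Your main line is exactly the paper's argument. The paper deduces the statement directly from Corollary 1.3 of \cite{bell2024d}, which is already stated for power series in several variables (that is the point of that paper). The only extra input it needs is that algebraic power series are D-finite: the paper cites \cite{Lips} for this, while your Step 1 gives the standard direct proof by differentiating the minimal polynomial.

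Your Step 3 is therefore superfluous, which is just as well, because as sketched it would not go through. A number field $\K$ contains only finitely many roots of unity, so those specializations give only finitely many lines through the origin. Moreover, rationality of a Kronecker substitution $F(t,t^D,\ldots,t^{D^{n-1}})$ does not by itself force $F$ to be rational, so your unspecified recovery argument would have to carry all the weight.
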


\begin{rk}
Corollary 1.3 in \cite{bell2024d} assumes that $F$ is a $D$-finite power series instead of condition (1). Since algebraic powers series are $D$-finite (see \cite{Lips}), \Cref{thm:bell} is well implied by this result.
\end{rk}

We now have everything to prove \Cref{thm:dic}.

\begin{proof}[Proof of \Cref{thm:dic}] $\newline$
Let $f$ be an $M_T$-function and assume that it is algebraic over $\A(\z)$. There exists $Q \in \A[\z]$ such that $g=Qf$ is integral over $\A[\z]$. Then $g$ is still an $M_T$-function, hence it satisfies two equations
$$p_m(\z)g(T^m\z)+\ldots+p_1(\z)g(T\z)+p_0(\z)g(\z)=0$$
with $p_i \in \A[\z]$ and 
$$g(\z)^p+a_{p-1}(\z)g(\z)^{p-1}+\ldots+a_{1}(\z)g(\z)+a_{0}(\z)=0$$
with $a_j \in \A[\z]$.\\
The idea is to apply \Cref{thm:bell} above to $g$.\\

We let $\K$ be a number field containing all coefficients of $g$ along with all the coefficients of the $p_i$'s and the $a_j$'s. Then we know from \Cref{prop:field} that there exists a finite set $S$ of places of $\K$ (containing the archimedean ones) such that all coefficients of $g$ are $S$-integers. Furthermore, $g$ is algebraic over $\A(\z)$. Therefore we only need to prove that the coefficients $c(k_1,\ldots,k_n)$ of $g$ satisfy the height condition
$$h(c(\bm{k}))=o(|\bm{k}|)$$
where $|\bm{k}|=k_1+\ldots+k_n$ and $h$ is the logarithmic Weil height.\\
If we let $\mathcal{M}(\K)$ be the set of places of $\K$, then
$$h(c(\bm{k}))=\sum_{\nu \in \mathcal{M}(\K)}{\max(0, \log{|c(\bm{k})|_{\nu}})}=\sum_{\nu \in S}{\max(0, \log{|c(\bm{k})|_{\nu}})} \, .$$
We will use the notation $\log^+(x)=\max(0,\log(x))$ as a shorthand.\\
Since $S$ is finite, it suffices to prove that $\log^+|c(\bm{k})|_{\nu}=o(|\bm{k}|)$ for all $\nu \in S$. We separate the proof of that bound for archimedean and non-archimedean places.\\
\\
\underline{Archimedean places}:\\
An archimedean place is given by $|x|_{\sigma}=|\sigma(x)|^i$ for some embedding $\sigma: \K \longrightarrow \C$ and some $i \in \{1/d,2/d\}$ where $d=[\K:\Q]$. This $i$ is just a constant which does not change anything for what we need to prove. As explained in \Cref{rk:emb}, we can assume that $\sigma=id: \K \rightarrow \C$, up to considering a conjugate of $g$ (which will still be an $M_T$-function and integral over $\A[\z]$).\\ 

Since $g$ is an $M_T$-function, it is meromorphic on the open unit ball $B(0,1)$ in $\C^n$ (By \Cref{prop:15} and \Cref{prop:13}). Write $g(\z)=\frac{g_1(\z)}{g_2(\z)}$ where $g_1$ and $g_2$ are two analytic functions on $B(0,1)$. Let $M$ be the set of zeros of $g_2$ in $B(0,1)$.\\
We will apply the Riemann extension theorem (Theorem 3 page 19 in \cite{gunning2022analytic}). $M$ is a thin set in $B(0,1)$, $g$ is holomorphic in $B(0,1) \setminus M$, and the fact that $g$ satisfies a monic algebraic equation implies that it is locally bounded on $B(0,1)$. Indeed, if $\z \in B(0,1)$, and if we assume that $g$ is not locally bounded near $\z$, then there would exist a sequence $(\z_n)$ of points of $B(0,1) \setminus M$ such that $\z_n \longrightarrow \z$ and $|g(\z_n)| \longrightarrow +\infty$. Then writing
$$0=g(\z_n)^p+\sum_{i=0}^{p-1}{a_i(\z_n)g(\z_n)^i}=g(\z_n)^p \left(1+\sum_{i=0}^{p-1}{\frac{a_i(\z_n)}{g(\z_n)^{p-i}}} \right)$$
we see that the modulus of the right-hand side tends to $+\infty$ (since the $
a_i(\z_n)$ are bounded and $|g(\z_n)| \longrightarrow +\infty$), which gives a contradiction.\\

Thus by the Riemann extension theorem, $g$ extends to a holomorphic function (still denoted by $g$) on $B(0,1)$. Let $R<1$. Then by the Cauchy estimates on the polysphere of radius $R$ we get
$$|c(k_1,\ldots,k_n)|=\left| \frac{g^{(\bm{k})}(0)}{k_1! \ldots k_n!} \right| \leq \frac{\max_{|z_1|=\ldots=|z_n|=R}|g(\z)|}{R^{k_1+\ldots+k_n}}=\frac{M_R}{R^{k_1+\ldots+k_n}}$$
and therefore
$$\frac{\log |c(\bm{k})|}{|\bm{k}|} \leq \frac{\log(M_R)}{|\bm{k}|}-\log(R) \, .$$
Thus
$$\limsup_{|\bm{k}| \rightarrow +\infty}{\frac{\log |c(\bm{k})|}{|\bm{k}|}} \leq -\log(R) \, .$$
Since this is true for all $R<1$, this shows that 
$$\limsup_{|\bm{k}| \rightarrow +\infty}{\frac{\log |c(\bm{k})|}{|\bm{k}|}} \leq 0$$
which implies that
$$\frac{\log^+ |c(\bm{k})|}{|\bm{k}|} \longrightarrow 0 \, .$$
Thus, the bound $\log^+|c(\bm{k})|_{\nu}=o(|\bm{k}|)$ is proven for archimedean places $\nu$.\\
\\
\underline{Non-archimedean places}:\\
A non-archimedean place of $\K$ is given by $|\cdot|_{p,\sigma}=|\sigma(\cdot)|_p^i$ for some embedding $\sigma$ of $\K$ into $\C_p$. Again, the value of $i$ does not matter for what we need to prove, and we can assume that $\sigma=id: \K \rightarrow \C_p$ (where $\K$ is implicitly viewed as a subset of $\C_p$ using the fixed embedding $\sigma_p$, as explained in \Cref{sect:3.4}), up to considering a conjugate of $g$.\\
Recall the definitions of the Tate algebras $A_{p}(R)$ in section \Cref{sect:3.4}. According to Proposition 15 (page 20) in \cite{bosch2014lectures}, $A_{p}=A_p(1)$ is factorial, hence integrally closed in its field of fractions.\\
Let $R \in p^{\Q}$. According to \Cref{lemma:value}, there is an $x \in \C_p$ such that $|x|_p=R$. Then the map
$$F(z_1,\ldots,z_n) \longrightarrow F(xz_1,\ldots,xz_n)$$
gives a ring isomorphism between $A_{p}(R)$ and $A_{p}$. Thus, for all $R \in p^{\Q}$, $A_{p}(R)$ is also integrally closed in its field of fractions.\\

Let $R \in p^{\Q}$ with $R<1$. According to \Cref{prop:merop}, we know that $$g\in \Frac(A_{p}(R)) \, .$$
Now, $g$ satisfies a monic algebraic equation with polynomial coefficients (hence in $A_{p}(R)$), and this implies $g \in A_{p}(R)$ since $A_{p}(R)$ is integrally closed in $\Frac(A_{p}(R))$.\\
Therefore, we get
$$\lim_{|\bm{k}| \longrightarrow +\infty}{|c(k_1,\ldots,k_n)|_p\ R^{|\bm{k}|}}=0$$
hence $|c(\bm{k})|_{p}=o((R^{-1})^{|\bm{k}|})$ for all $R<1$ such that $R \in p^{\Q}$. Since $p^{\Q}$ is dense in $\R_+^{*}$, this means that $|c(\bm{k})|_p=o((1+\varepsilon)^{|\bm{k}|})$ for all $\varepsilon >0$, hence we obtain $$\log^+|c(\bm{k})|_{p}=o(|\bm{k}|) \, .$$
This shows that the bound $\log^+|c(\bm{k})|_{\nu}=o(|\bm{k}|)$ is also true for non-archimedean places $\nu$.\\

Finally, we can apply \Cref{thm:bell} which tells us that $g$ is a rational function, and therefore $f$ is also a rational function, which ends the proof.
\end{proof}

Once \Cref{thm:dic} is proven, it is relatively easy to obtain the regularity of extensions.

\begin{prop} [Regularity of extensions] \label{prop:16} $\newline$
Let $T$ be in the class $\mathcal{F}$. Let $f_1, \ldots, f_m$ be $M_T$-functions, then the field extension $$\A(\z)(f_1(\z),\ldots,f_m(\z))$$ is a regular extension of $\A(\z)$.
\end{prop}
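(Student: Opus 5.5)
The plan is to show that every element of $L=\A(\z)(f_1(\z),\ldots,f_m(\z))$ which is algebraic over $\A(\z)$ already lies in $\A(\z)$. Here we use the definition of regularity recalled in the introduction: every element of $L$ is either in $\A(\z)$ or transcendental over $\A(\z)$. Since $\A$ is algebraically closed, $\A(\z)$ is algebraically closed in $L$ exactly when this holds. So regularity reduces to this dichotomy for an arbitrary element of $L$. We then want to deduce it from \Cref{thm:dic}, which gives the dichotomy only for $M_T$-functions.

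First we show that every element $h\in L$ can be written as $h=u/v$ with $u,v\in\A[\z][f_1,\ldots,f_m]$ and $v\neq 0$. By \Cref{prop:stab}(1), the ring $\mathfrak{M}_T$ of $M_T$-functions is an $\A[\z]$-algebra. The $f_i$ are $M_T$-functions, and polynomials are $M_T$-functions by \Cref{prop:inhom}. Hence $u$ and $v$ are $M_T$-functions, i.e.\ formal power series in $\A[[\z]]$ satisfying $T$-Mahler equations.

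Next, suppose $h$ is algebraic over $\A(\z)$. We want to reduce to a single power series which is both an $M_T$-function and algebraic. The cleanest route is to use the fact that $v$ is either rational or transcendental, which prevents us from dividing naively. Instead we use a norm-type trick. Since $h$ is algebraic over $\A(\z)$, it has a minimal polynomial $X^d+b_{d-1}X^{d-1}+\cdots+b_0$ over $\A(\z)$. Then $u=hv$ is algebraic over $\A(\z)(v)$, which is not enough on its own.

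A more robust approach is to view $h$ as a $T$-Mahler function in the difference field $\Frac(\A[[\z]])$. There $V_u$ and $V_v$ are finite dimensional, and $h$ lies in the algebraic closure $F$ of $\A(\z)$ inside $\Frac(\A[[\z]])$. This field $F$ is stable under $T$, because $T$ is an injective ring endomorphism and sends algebraic elements to algebraic elements. The key step is to show that $F\cap L$ consists of $T$-Mahler elements, and then to clear denominators to obtain a power series.

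Concretely, we argue as follows. The field $F_0=F\cap L$ is a finite extension of $\A(\z)$: $L$ is finitely generated over $\A(\z)$, so its relative algebraic closure is finite. Since $T$ maps $L$ into $L$ (because $T f_i\in V_{f_i}\subseteq L$) and maps $F$ into $F$, it maps $F_0$ into itself. Hence all $T^kh$ lie in $F_0$, which is a finite dimensional $\A(\z)$-vector space. Then $V_h$ is finite dimensional, so $h$ satisfies a $T$-Mahler equation by \Cref{prop:1}.

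Now we need $h$ to be a power series. Since $h$ is algebraic over $\A(\z)$, there is a nonzero $Q\in\A[\z]$ with $g=Qh$ integral over $\A[\z]$. Such a $g$ still satisfies a $T$-Mahler equation, by the same argument with $F_0$. The main obstacle is to show that $g$ lies in $\A[[\z]]$, not merely in $\Frac(\A[[\z]])$. For this, I would first try the following. The ring $\A[[\z]]$ is a UFD, hence integrally closed in its fraction field. Since $g$ is integral over $\A[\z]\subseteq\A[[\z]]$, it follows that $g\in\A[[\z]]$. So $g$ is an $M_T$-function algebraic over $\A(\z)$, and \Cref{thm:dic} gives $g\in\A(\z)$. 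Therefore $h=g/Q\in\A(\z)$, which proves regularity.
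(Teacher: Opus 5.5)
Your overall route is the paper's: the relative algebraic closure of $\A(\z)$ in a finitely generated extension is finite, the iterates of an algebraic element stay in it and are therefore $\A(\z)$-linearly dependent, and then you clear denominators, use that $\A[[\z]]$ is integrally closed, and apply \Cref{thm:dic}. However, one step fails as written. You claim that $T$ maps $L=\A(\z)(f_1,\ldots,f_m)$ into itself because $Tf_i\in V_{f_i}\subseteq L$, and the inclusion $V_{f_i}\subseteq L$ is false in general.

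The space $V_{f_i}$ is spanned by all the iterates $T^kf_i$, and these need not be rational functions of $f_1,\ldots,f_m$. That only happens when the family is $T$-stable, e.g.\ when $(f_1,\ldots,f_m)^*$ solves a Mahler system, which the proposition does not assume. For instance, take $n=1$, $T=(2)$, $g=\prod_{k\ge 0}(1-z^{2^k})$, $h=\sum_{k\ge 0}z^{2^k}$ and $f_1=gh$. Then $Tf_1=(f_1-zg)/(1-z)$. So $Tf_1\in\A(z)(f_1)$ would force $g,h\in\A(z)(f_1)$, contradicting the algebraic independence of $g$ and $h$ over $\A(z)$. Consequently your conclusion that all $T^kh$ lie in $F_0=F\cap L$ is unjustified. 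Without it you have no finite-dimensional space containing $V_h$, and the $T$-Mahler property of $h$ (and of $g=Qh$) is not established.

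The repair is the first line of the paper's proof: work with $\mathbb{L}=\A(\z)(T^lf_i : 1\le i\le m,\ l\in\N)$ instead of $L$.
\begin{itemize}
\item $\mathbb{L}$ is stable under $T$ by construction.
\item $\mathbb{L}$ is still finitely generated over $\A(\z)$, because each $V_{f_i}$ is spanned by finitely many iterates (proof of \Cref{prop:1}).
\item Since $L\subseteq\mathbb{L}$, the algebraic closure of $\A(\z)$ in $L$ is contained in that in $\mathbb{L}$, so it suffices to show the latter equals $\A(\z)$.
\end{itemize}
With $F_0=F\cap\mathbb{L}$, the rest of your argument goes through verbatim and coincides with the paper's proof: finiteness of $F_0$, the $T$-Mahler property of $h$ and of $g=Qh$, integrality forcing $g\in\A[[\z]]$, then \Cref{thm:dic}. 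The opening detour through $h=u/v$ and the \enquote{norm-type trick} is not needed.
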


\begin{proof}[Proof of \Cref{prop:16}] $\newline$
Let $\mathbb{L}=\A(\z)(T^lf_i, 1 \leq i\leq m, l \in \N)$. Then it suffices to prove that $\mathbb{L}$ is a regular extension of $\A(\z)$. Let $\K$ be the algebraic closure of $\A(\z)$ in $\mathbb{L}$. Then since the $f_i$ are $M_T$-functions, $\mathbb{L}$ is finitely generated over $\A(\z)$, and so $\K$ is finitely generated over $\A(\z)$ (it is a general algebraic result that a subextension of a finitely generated extension is itself finitely generated: see \cite{lang2012algebra}, Exercise 4, Chapter 8). But since $\K$ is an algebraic extension of $\A(\z)$, it is a finite extension of $\A(\z)$ (say of degree $d$).\\
Let $f \in \K$. Then $f, Tf, \ldots, T^df$ are all algebraic over $\A(\z)$ by applying $T^k$ to the algebraic equation satisfied by $f$. Therefore they are all in $\K$, and so they must be linearly dependent over $\A(\z)$. This means that $f$ satisfies a $T$-Mahler equation. We cannot conclude directly that $f \in \A(\z)$ since we do not know if $f$ is a power series. However, we do know that $f \in \mathbb{L}$ so $f \in \Frac(\A[[\z]])$.\\
Since $f$ is algebraic over $\A(\z)$, we know that there exists $Q \in \A[\z]$ such that $g=Qf$ is integral over $\A[\z]$, hence also over $\A[[\z]]$. But $g \in \Frac(\A[[\z]])$ and $\A[[\z]]$ is factorial and therefore also integrally closed in its field of fractions. Therefore, $g \in \A[[\z]]$. Since $g$ still satisfies a Mahler equation, it is an $M_T$-function. But since it is also still algebraic over $\A(\z)$, we obtain $g \in \A(\z)$ from \Cref{thm:dic}, and therefore $f \in \A(\z)$.\\
We can conclude that $\K=\A(\z)$, and therefore $\mathbb{L}$ is a regular extension of $\A(\z)$, which completes the proof.
\end{proof}

We end this section with the proof of \Cref{thm:lift}.

\begin{proof} [Proof of \Cref{thm:lift}] $\newline$
As we explained at the beginning of this section, Theorem 3.3 in \cite{adamczewski2020mahler} already shows that \Cref{thm:lift} is true when we assume that $\A(\z,f_1(\z),\ldots,f_m(\z))$ is a regular extension of $\A(\z)$. We showed in the previous proposition that this condition is always satisfied, hence \Cref{thm:lift} is true unconditionally.
\end{proof}

\section{The descent theorem at regular points}

We start this section with the proof of \Cref{coro:trans}. Then we prove a descent theorem at regular points.

\begin{proof} [Proof of \Cref{coro:trans}] $\newline$
We know that $\al$ is a regular point with respect to the companion system associated to the Mahler equation, and that $(T,\al)$ is admissible. Furthermore, the functions\\
$(f,Tf,\ldots,T^{m-1}f,1)$ are $\A(\z)$-linearly independent by minimality of $m$. Therefore the values $(f(\al),Tf(\al),\ldots,T^{m-1}f(\al),1)$ are $\A$-linearly independent.\\
If $f(\al)=\beta$ were algebraic, there would exist a $\A$-linear relation $1 \cdot f(\al)-\beta \cdot 1=0$ between the values at $\al$ of $f$ and $g=1$. This gives a contradiction.\\
\end{proof}

For this corollary to apply, we need to prove that $(f,Tf,\ldots,T^{m-1}f,1)$ is linearly independent over $\A(\z)$, which can be difficult to do (however, we can be sure that there exists some $m$ such that this is true). To avoid this problem, one method that is frequently used in the univariate case is to use the following \enquote{Descent theorem}: if $f$ is an $M_q$-function (with $q\ge2)$ with coefficients in a number field $\K$, and if $\alpha \in \K$ with $|\alpha|<1$, then either $f(\alpha) \in \K$ or $f(\alpha)$ is transcendental. We prove a particular case of this theorem for multivariate $M_T$-functions.

\begin{theo} [Descent Theorem at regular points] \label{thm:9} $\newline$
Let $Y=(f_1,\ldots,f_m)^*$ be a vector of $M_T$-functions with coefficients in a number field $\K$, related by a system $Y(\z)=A(\z)Y(T\z)$ with $A \in \GL_n(\A(\z))$. We assume that $\al \in (\K^*)^n$ is a regular point with respect to this system and that $(T,\al)$ is admissible.\\
Let $I \subseteq \llbracket 1,m \rrbracket$ be any subset. Then the family $(f_j(\al))_{j \in I}$ is linearly independent over $\A$ if and only if it is linearly independent over $\K$.\\
In particular, for all $j \in \llbracket 1, m \rrbracket$, either $f_j(\al)$ is in $\K$ or it is transcendental.\\
Furthermore, if $(b_1,\ldots,b_m) \in \K^m$, then $b_1f_1(\al)+\ldots+b_mf_m(\al)$ is either in $\K$ or transcendental.
\end{theo}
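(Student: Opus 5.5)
We plan to reduce to the lifting theorem (\Cref{thm:lift}) in degree one, then descend the resulting functional relation from $\A$ to $\K$ by a Galois/linear algebra argument. The direction \enquote{independent over $\A$ implies independent over $\K$} is trivial, so assume $\sum_{j\in I}\lambda_j f_j(\al)=0$ with $\lambda_j\in\A$ not all zero. Applying \Cref{thm:lift} to the linear form $P=\sum_{j\in I}\lambda_jX_j$ gives $Q=\sum_{j=1}^m q_j(\z)X_j$ with $q_j\in\A[\z]$, $\sum_j q_j f_j=0$, $q_j(\al)=\lambda_j$ for $j\in I$ and $q_j(\al)=0$ for $j\notin I$. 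So the $\A[\z]$-module
$$\mathcal{R}=\Big\{(q_1,\ldots,q_m)\in\A[\z]^m : \sum_j q_jf_j=0\Big\}$$
has an element whose evaluation at $\al$ is nonzero and supported on $I$.

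Next we show that $\mathcal{R}$ is defined over $\K$. Since all $f_j$ have coefficients in $\K$, the condition $\sum_j q_jf_j=0$ is a system of $\K$-linear equations in the coefficients of the $q_j$, for each bound on their degrees. Hence the space $\mathcal{R}_{\le D}$ of relations of degree $\le D$ is the solution set of a linear system with coefficients in $\K$, and therefore $\mathcal{R}_{\le D}=\A\otimes_\K \mathcal{R}^{\K}_{\le D}$, where $\mathcal{R}^{\K}_{\le D}$ is the set of relations with coefficients in $\K$. Take $D$ at least the degree of $Q$; then $Q=\sum_s \mu_s Q_s$ with $\mu_s\in\A$ and $Q_s$ relations with coefficients in $\K$.

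Now evaluate at $\al\in(\K^*)^n$. Each $Q_s(\al,X)=\sum_j q_{s,j}(\al)X_j$ is a linear form with coefficients in $\K$ vanishing at $(f_1(\al),\ldots,f_m(\al))$. The vectors $v_s=(q_{s,j}(\al))_j\in\K^m$ span over $\A$ a space containing $v=(q_j(\al))_j$, which is nonzero and supported on $I$. The main subtle point is that the individual $v_s$ need not be supported on $I$. This is handled by linear algebra over $\K$: the subspace $W=\Span_\K(v_s)$ satisfies $\A\otimes_\K W\ni v$. Since $\{w : w_j=0 \ \forall j\notin I\}$ is a $\K$-rational subspace, its intersection with $\A\otimes W$ equals $\A\otimes$ (its intersection with $W$), again because intersections of $\K$-rational subspaces are computed by $\K$-linear equations. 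That intersection is nonzero, so there is a nonzero $w\in W$ supported on $I$. Since every element of $W$ annihilates the vector of values, we get $\sum_{j\in I}w_jf_j(\al)=0$ with $w_j\in\K$ not all zero, which gives the equivalence.

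For the consequences, note that we may add to $Y$ the constant function $1$ without breaking the hypotheses, using the system $\Diag(A,1)$ (still regular at $\al$, with coefficients in $\K$). Applying the equivalence to $\{f_j,1\}$, or to $\{f_1,\ldots,f_m,1\}$ after a $\K$-linear change of basis making $b_1f_1+\cdots+b_mf_m$ a coordinate (conjugating $A$ by a constant matrix in $\GL_{m+1}(\K)$ preserves regularity), an algebraic value $\beta$ yields an $\A$-linear relation with $1$, hence a $\K$-relation, so the value lies in $\K$. The only real obstacle is the rationality argument in the second and third steps, which is straightforward linear algebra.
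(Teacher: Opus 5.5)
Your proof is correct and follows the paper's overall route: the lifting theorem in degree one, descent of the lifted relation from $\A[\z]$ to $\K[\z]$, evaluation at $\al$, and then adjoining $1$ via $\Diag(A,1)$ plus a constant $\K$-rational change of basis for the two consequences. The one real difference is in the descent step, which is the paper's \Cref{lem:funcdes}.

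The paper works with the lifted relation $\sum_i w_i f_i=0$ itself. It takes a finite extension $\K'$ of $\K$ containing the coefficients, expands these coefficients along a $\K$-basis $1,\varphi,\ldots,\varphi^{d-1}$ of $\K'$, and keeps a single coordinate map $\K'\to\K$ that does not kill $\lambda_{i_0}=w_{i_0}(\al)$. This map is $\K$-linear, so it commutes with multiplication by the $f_i$ (which have coefficients in $\K$) and with evaluation at $\al\in\K^n$. Hence the vanishing of $w_j(\al)$ for $j\notin I$ is inherited automatically.

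You instead expand $Q$ along an arbitrary $\K$-basis $(Q_s)$ of the space of relations of degree $\le D$, whose $\K$-rationality you establish correctly. Such a basis is not adapted to $Q$, so the individual vectors $Q_s(\al)$ need not be supported on $I$. You therefore have to recover the support condition by a second argument: the rationality of $\Span_\A(W)$ intersected with the coordinate subspace of vectors supported on $I$.

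Both arguments are valid. The paper's is a one-line projection that keeps the support condition for free. Yours replaces the choice of primitive element with rank and dimension arguments over $\K$, at the cost of that extra intersection step.
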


In order to prove \Cref{thm:9}, we first show a descent result at a functional level.

\begin{lem} \label{lem:funcdes}
Let $h_1(\z), \ldots, h_m(\z)$ be analytic functions in a neighborhood of the origin with coefficients in a number field $\K$. Let $\al \in \K^n$ be a point in the domain of convergence of these functions. Let $J \subseteq \llbracket 1,m \rrbracket $ be any subset.\\
Assume that we have a linear dependence relation
$$w_1(\z)h_1(\z)+\ldots+w_m(\z)h_m(\z)=0$$
where $w_i \in \A[\z]$ are not all zero, and such that $w_j(\al)=0$ for all $j \in J$, and $w_{i_0}(\al) \neq 0$.\\
Then there exist polynomials $w_i' \in \K[\z]$ not all zero, such that
$$w_1'(\z)h_1(\z)+\ldots+w_m'(\z)h_m(\z)=0$$
with $w_j'(\al)=0$ for all $j \in J$, and $w_{i_0}'(\al) \neq 0$.
\end{lem}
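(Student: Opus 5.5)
The plan is a standard Galois-descent (or $\K$-basis) argument carried out on the linear system defining the relation. Since the $h_i$ have coefficients in $\K$ and $\al \in \K^n$, every condition on the unknown polynomials is $\K$-linear, and a system with coefficients in $\K$ that has a solution over $\A$ also has one over $\K$.

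First, choose a finite extension $\mathbb{L}$ of $\K$ containing all coefficients of the $w_i$. Fix a $\K$-basis $1=\lambda_1,\lambda_2,\ldots,\lambda_s$ of $\mathbb{L}$ and write
$$w_i(\z)=\sum_{t=1}^s \lambda_t w_{i,t}(\z), \qquad w_{i,t}\in \K[\z].$$
Now look at the coefficient of $\z^{\bm{k}}$ in $\sum_i w_i h_i$. It equals $\sum_t \lambda_t c_{\bm{k},t}$, where $c_{\bm{k},t}\in\K$ is the coefficient of $\z^{\bm{k}}$ in $\sum_i w_{i,t}h_i$; this uses that the $h_i$ have coefficients in $\K$. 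Because the $\lambda_t$ are $\K$-linearly independent, every $c_{\bm{k},t}$ vanishes. Hence, for each $t$,
$$\sum_i w_{i,t}(\z)h_i(\z)=0$$
holds as a power series identity.

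Similarly, since $\al\in\K^n$, we have $w_j(\al)=\sum_t\lambda_t w_{j,t}(\al)$ with each $w_{j,t}(\al)\in\K$. So $w_j(\al)=0$ forces $w_{j,t}(\al)=0$ for every $t$ and every $j\in J$.

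Next, since $w_{i_0}(\al)=\sum_t \lambda_t w_{i_0,t}(\al)\neq 0$, some index $t_0$ satisfies $w_{i_0,t_0}(\al)\neq0$. Set $w_i'=w_{i,t_0}\in\K[\z]$. Then $w'_{i_0}(\al)\neq 0$, so the $w'_i$ are not all zero. They satisfy the relation $\sum_i w'_i h_i=0$ and vanish at $\al$ for all $j\in J$, as required.

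I do not expect a real obstacle. The only delicate point is justifying the coefficientwise splitting: the relation is an identity of formal power series, each coefficient is a finite sum, and the $\lambda_t$ are linearly independent over $\K$. Analyticity is used only so that evaluation at $\al$ makes sense, and the argument itself only evaluates polynomials at $\al$. An alternative would be to average the relation over the $\K$-embeddings of $\mathbb{L}$, but that risks cancelling the nonvanishing at $\al$, which is why I would use the basis decomposition.
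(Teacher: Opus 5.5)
Your proposal is correct and is essentially the paper's own argument, which follows Lemma~5.3 of Adamczewski--Faverjon. The paper expands the coefficients of the $w_i$ in the power basis $1,\varphi,\ldots,\varphi^{d-1}$ of $\K'/\K$ rather than in an arbitrary $\K$-basis. It then takes the $\K$-linear coordinate projection $f_l$ for an index $l$ with $f_l(w_{i_0}(\al))\neq 0$, exactly as you pick $t_0$.
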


\begin{proof} [Proof of \Cref{lem:funcdes}] $\newline$
The proof in \cite{adamczewski2017methode} (Lemma 5.3) directly generalizes to the multivariate case. We recall the argument below for convenience.\\
We use that the coefficients of the polynomials $w_i$ all lie in a common number field $\K'$, which we can assume to contain $\K$. The extension $\K'/\K$ is finite so it is generated by a primitive element $\varphi \in \K'$. This gives a $\K$-linear isomorphism $f=(f_1,\ldots,f_d):\K' \simeq \K^d$ with $d$ the degree of $\varphi$ over $\K$, which sends $x \in \K'$ to $(x_1,\ldots,x_d)$ such that $x=x_1+x_2 \varphi+\ldots+x_d\varphi^{d-1}$. This $f$ naturally extends to $\K'[\z]$ by applying $f$ to all coefficients.\\
Since $w_{i_0}(\al) \neq 0$, there is an $l$ such that $f_l(w_{i_0}(\al)) \neq 0$. Then we let $w_i'(\z)=f_l(w_i(\z))$ for all $i$. Since $f_l$ is $\K$-linear and the $h_i$ have coefficients in $\K$, we obtain 
$$w_1'(\z)h_1(\z)+\ldots+w_m'(\z)h_m(\z)=0$$
by looking at each coefficient.\\
Furthermore, we have $w_j'(\al)=f_l(w_{j}(\al))=0$ for all $j \in J$, and $w_{i_0}'(\al)=f_l(w_{i_0}(\al)) \neq 0$. Note that it is also obvious that $w_{i_0}'(\z) \neq 0$ since $w_{i_0}'(\al) \neq 0$, therefore the new relation is non-trivial.
\end{proof}

Thanks to the previous lemma and the lifting theorem, we can give a proof of \Cref{thm:9}.

\begin{proof} [Proof of \Cref{thm:9}] $\newline$
For the first part, we assume that $(f_j(\al))_{j \in I}$ is linearly dependent over $\A$ and we want to prove that it is linearly dependent over $\K$ (the other direction is always true).\\
Let $(\lambda_j)_{j \in I}$ be algebraic numbers not all zero such that
$$\sum_{j \in I}{\lambda_j f_j(\al)}=0 \, .$$
We can view that as a linear relation between all the $f_i(\al)$ for $i \in \llbracket 1,m \rrbracket$. By \Cref{thm:lift} for homogeneous relations of degree 1, we obtain that the $f_i(\z)$ satisfy a linear relation with coefficients in $\A[\z]$
$$w_1(\z)f_1(\z)+\ldots+w_m(\z)f_m(\z)=0$$
with $w_j(\al)=\lambda_j$ for $j \in I$ and $w_j(\al)=0$ for $j \notin I$. Now we apply the previous lemma with $J=\llbracket 1,m \rrbracket \setminus I$ and with $i_0 \in I$ such that $\lambda_{i_0} \neq 0$. It gives polynomials $w_i' \in \K[\z]$ such that
$$w_1'(\z)f_1(\z)+\ldots+w_m'(\z)f_m(\z)=0$$
which vanish at $\al$ for $i \notin I$ and such that $w'_{i_0}(\al) \neq 0$. Setting $\z=\al$ in the previous relation gives
$$\sum_{j \in I}{w_j'(\al)f_j(\al)}=0$$
which is a non trivial $\K$-linear relation between the $(f_j(\al))_{j \in I}$. This proves the first point.\\
\\
For the second point, we just set $f_{m+1}=1$. Then $\tilde{Y}=(f_1,\ldots,f_{m+1})$ satisfies the system associated to the matrix $B(\z)=\Diag(A(\z),1)$. It is clear that $\al$ is still a regular point for this system, therefore we can apply the first part with $I=\{j,m+1\}$. This gives that $(f_j(\al),1)$ is linearly independent over $\A$ if and only if it is linearly independent over $\K$. This means exactly that $f_j(\al)$ is either in $\K$ or transcendental.\\
\\
For the third point, we can assume that $(b_1,\ldots,b_m) \in \K^m \setminus \{0\}$. We let $B \in \GL_m(\K)$ such that the first row of $B$ is $(b_1,\ldots,b_m)$. Then we perform the change of variables $Y_1=BY$. We have $$Y_1(\z)=BY(\z)=BA(\z)Y(T\z)=BA(\z)B^{-1}Y_1(T\z) \, .$$
Thus $Y_1$ still satisfies a Mahler system with matrix $BA(\z)B^{-1}$.\\
$B$ has no pole, and conjugation does not change the determinant, so $\al$ is still regular with respect to this new system. Applying the second point to the first coordinate of $Y_1$ (which is $b_1f_1+\ldots+b_mf_m$) gives the dichotomy that we wanted.
\end{proof}

\nocite{*}
\bibliographystyle{plain}
\bibliography{biblio}

\begin{thebibliography}{10}

\bibitem{Gap}
B.~Adamczewski, J.~Bell, and D.~Smertnig.
\newblock A height gap theorem for coefficients of {M}ahler functions.
\newblock {\em J. Eur. Math. Soc. (JEMS)}, 25(7):2525--2571, 2023.

\bibitem{Hypertrans}
B.~Adamczewski, T.~Dreyfus, and C.~Hardouin.
\newblock Hypertranscendence and linear difference equations.
\newblock {\em J. Amer. Math. Soc.}, 34(2):475--503, 2021.

\bibitem{adamczewski2017methode}
B.~Adamczewski and C.~Faverjon.
\newblock M{\'e}thode de mahler: relations lin{\'e}aires, transcendance et applications aux nombres automatiques.
\newblock {\em Proceedings of the London Mathematical Society}, 115(1):55--90, 2017.

\bibitem{Algotrans}
B.~Adamczewski and C.~Faverjon.
\newblock M\'ethode de {M}ahler, transcendance et relations lin\'eaires: aspects effectifs.
\newblock {\em J. Th\'eor. Nombres Bordeaux}, 30(2):557--573, 2018.

\bibitem{E+M}
B.~Adamczewski and C.~Faverjon.
\newblock Relations alg\'ebriques entre valeurs de {$E$}-fonctions ou de {$M$}-fonctions.
\newblock {\em C. R. Math. Acad. Sci. Paris}, 362:1215--1241, 2024.

\bibitem{adamczewski2020mahler}
B.~Adamczewski and C.~Faverjon.
\newblock Mahler's method in several variables and finite automata.
\newblock {\em Annals of Mathematics}, 2026, to appear.

\bibitem{bell2024d}
J.~Bell, S.~Chen, K.~Nguyen, and U.~Zannier.
\newblock D-finiteness, rationality, and height {III}: multivariate {P}\'olya-{C}arlson dichotomy.
\newblock {\em Math. Z.}, 306(4):Paper No. 70, 13, 2024.

\bibitem{Beu}
F.~Beukers.
\newblock A refined version of the {S}iegel-{S}hidlovskii theorem.
\newblock {\em Ann. of Math. (2)}, 163(1):369--379, 2006.

\bibitem{bosch2014lectures}
S.~Bosch.
\newblock {\em Lectures on formal and rigid geometry}, volume 2105.
\newblock Springer, 2014.

\bibitem{CompSol}
F.~Chyzak, T.~Dreyfus, P.~Dumas, and M.~Mezzarobba.
\newblock Computing solutions of linear {M}ahler equations.
\newblock {\em Math. Comp.}, 87(314):2977--3021, 2018.

\bibitem{cobham1968hartmanis}
A.~Cobham.
\newblock On the hartmanis-stearns problem for a class of tag machines.
\newblock In {\em 9th Annual Symposium on Switching and Automata Theory (SWAT 1968)}, pages 51--60. IEEE, 1968.

\bibitem{Dumas}
P.~Dumas.
\newblock {\em R\'ecurrences mahl\'eriennes, suites automatiques, \'etudes asymptotiques}.
\newblock Institut National de Recherche en Informatique et en Automatique (INRIA), Rocquencourt, 1993.
\newblock Th\`ese, Universit\'e{} de Bordeaux I.

\bibitem{CompHahn}
C.~Faverjon and J.~Roques.
\newblock Hahn series and {M}ahler equations: algorithmic aspects.
\newblock {\em J. Lond. Math. Soc. (2)}, 110(1):Paper No. e12945, 60, 2024.

\bibitem{Rigid}
J.~Fresnel and M.~van~der Put.
\newblock {\em Rigid analytic geometry and its applications}, volume 218 of {\em Progress in Mathematics}.
\newblock Birkh\"auser Boston, Inc., Boston, MA, 2004.

\bibitem{gunning2022analytic}
R.~Gunning and H.~Rossi.
\newblock {\em Analytic functions of several complex variables}, volume 368.
\newblock American Mathematical Society, 2022.

\bibitem{Kubota}
K.~K. Kubota.
\newblock On the algebraic independence of holomorphic solutions of certain functional equations and their values.
\newblock {\em Math. Ann.}, 227(1):9--50, 1977.

\bibitem{lang2012algebra}
S.~Lang.
\newblock {\em Algebra}.
\newblock Springer Science \& Business Media, 2012.

\bibitem{Lips}
L.~Lipshitz.
\newblock {$D$}-finite power series.
\newblock {\em J. Algebra}, 122(2):353--373, 1989.

\bibitem{LVDP1}
J.~H. Loxton and A.~J. Van~der Poorten.
\newblock Arithmetic properties of certain functions in several variables.
\newblock {\em J. Number Theory}, 9(1):87--106, 1977.

\bibitem{LVDP2}
J.~H. Loxton and A.~J. Van~der Poorten.
\newblock Arithmetic properties of certain functions in several variables. {II}.
\newblock {\em J. Austral. Math. Soc. Ser. A}, 24(4):393--408, 1977.

\bibitem{LVDP3}
J.~H. Loxton and A.~J. van~der Poorten.
\newblock Arithmetic properties of certain functions in several variables. {III}.
\newblock {\em Bull. Austral. Math. Soc.}, 16(1):15--47, 1977.

\bibitem{mahler1929arithmetische}
K.~Mahler.
\newblock Arithmetische eigenschaften der l{\"o}sungen einer klasse von funktionalgleichungen.
\newblock {\em Mathematische Annalen}, 101(1):342--366, 1929.

\bibitem{mahler1930arithmetische}
K.~Mahler.
\newblock Arithmetische eigenschaften einer klasse transzendental-transzendenter funktionen.
\newblock {\em Mathematische Zeitschrift}, 32(1):545--585, 1930.

\bibitem{mahler1930uber}
K.~Mahler.
\newblock Uber das verschwinden von potenzreihen mehrerer ver{\"a}nderlichen in speziellen punktfolgen.
\newblock {\em Mathematische Annalen}, 103(1):573--587, 1930.

\bibitem{Masser}
D.~W. Masser.
\newblock A vanishing theorem for power series.
\newblock {\em Invent. Math.}, 67(2):275--296, 1982.

\bibitem{Nish1983}
Ku. Nishioka.
\newblock On a problem of {M}ahler for transcendency of function values. {II}.
\newblock {\em Tsukuba J. Math.}, 7(2):265--279, 1983.

\bibitem{Nish}
Ku. Nishioka.
\newblock New approach in {M}ahler's method.
\newblock {\em J. Reine Angew. Math.}, 407:202--219, 1990.

\bibitem{nishioka2006mahler}
Ku. Nishioka.
\newblock {\em Mahler functions and transcendence}.
\newblock Springer, 1996.

\bibitem{Phil}
P.~Philippon.
\newblock Groupes de {G}alois et nombres automatiques.
\newblock {\em J. Lond. Math. Soc. (2)}, 92(3):596--614, 2015.

\bibitem{Rand}
B.~Randé.
\newblock {\em Equation fonctionnelle de {M}ahler et application aux suites p-régulières}.
\newblock PhD thesis, Université {B}ordeaux I, 1992.

\bibitem{waldschmidt2000diophantine}
M.~Waldschmidt.
\newblock {\em Diophantine approximation on linear algebraic groups: transcendence properties of the exponential function in several variables}, volume 326.
\newblock Springer, 2000.

\end{thebibliography}
\end{document}